\documentclass[a4paper,10pt,openany]{amsart}
\usepackage{amssymb,latexsym,amscd, amsthm, amsmath}
\usepackage{graphicx,color, tikz} 
\newtheorem{Thm}{Theorem}[section]
\newtheorem{Cor}[Thm]{Corollary}
\newtheorem{Prop}[Thm]{Proposition}
\newtheorem{Lem}[Thm]{Lemma}
\newtheorem{Def}[Thm]{Definition}
\newtheorem{Ex}[Thm]{Example}
\newtheorem{Rmk}[Thm]{Remark}

\date{}

\newcommand{\AAA}{{\mathcal{A}}}

\newcommand{\CC}{{\mathbb{C}}}
\newcommand{\PP}{{\mathbb{P}}}

\newcommand{\ZZ}{{\mathbb{Z}}}
\newcommand{\NN}{{\mathbb{N}}}

\newcommand{\hada} {\star}

\begin{document}

\title{Gorenstein points in $\PP^3$ via Hadamard products of projective varieties}
\author{C. Bocci}
\address{Dipartimento di Ingegneria dell'Informazione e Scienze Matematiche,
Universit\`a di Siena\\
Via Roma, 56 Siena, Italy}
\email{cristiano.bocci@unisi.it}
\author{C. Capresi}
\address{Dipartimento di Ingegneria dell'Informazione e Scienze Matematiche,
Universit\`a di Siena\\
Via Roma, 56 Siena, Italy}
\email{capresi3@student.unisi.it}
\author{D. Carrucoli}
\address{Dipartimento di Ingegneria dell'Informazione e Scienze Matematiche,
Universit\`a di Siena\\
Via Roma, 56 Siena, Italy}
\email{danielecarrucoli@hotmail.it}

\begin{abstract} We show how to construct a stick figure of lines in $\PP^3$ using the Hadamard product of projective varieties. Then, applying the results of Migliore and Nagel, we use such a stick figure to build a Gorenstein set of points with given $h-$vector ${\mathbf h}$. Since the Hadamard product is a coordinate-wise product, we show, at the end, how the coordinates of the points, in the Gorenstein set, can be directly determined.
\end{abstract}

\maketitle

\section{Introduction}

In the last few years, the Hadamard products of projective varieties have been widely studied from the point of view of Projective Geometry and Tropical Geometry.
In fact, the Hadamard products of projective varieties and the Hadamard powers of a projective variety are  well-connected to other operations of varieties: they are the multiplicative analogs of joins and secant varieties, and in tropical geometry, tropicalized Hadamard products equal Minkowski sums.  It is natural to study properties of this new operation, and see its effects on various varieties.  

From the point of view of Projective Geometry, several directions of research have been considered.
The paper \cite{BCK}, where Hadamard product of general linear spaces is studied,  can be considered the first step in this direction. Successively, the first author, with Calussi, Fatabbi and Lorenzini, in \cite{BCFL1}, address the Hadamard product of linear varieties not necessarily in general position, obtaining, in $\mathbb{P}^2$ a complete description of the possible outcomes. 
Then, in \cite{BCFL2}, they address the Hadamard product of  not necessarily generic linear varieties and show that the Hilbert function of the Hadamard product $X\star Y$ of two  varieties, with $\dim(X), \dim(Y)\leq 1$, is the product of the Hilbert functions of the original varieties $X$ and $Y$ and  that the  Hadamard
product of two generic linear varieties $X$ and $Y$ is projectively equivalent to a Segre embedding.
 An important result contained in \cite{BCK} concerns the construction of star configurations of points, via Hadamard product. This result found a generalization in \cite{CCGVT} where the authors introduce a new construction, using the Hadamard product, to obtain star configurations of codimension $c$ of $\PP^n$ and which they called Hadamard star configurations. Successively,  Bahmani Jafarloo and Calussi, in  \cite{BJC},  introduce a more general type of Hadamard star configuration; any star configuration constructed by their approach is called a weak Hadamard star configuration.

The use of Hadamard products in this context permits a complete control both in the coordinates of the points forming the star configuration and  the equations of the hyperplanes involved on it. Thus, the question if other interesting geometrical objects can be obtained by Hadamard products naturally arises. In this paper, we give a first positive answer showing how to construct a Gorenstein set of points in $\PP^3$ with given $h-$vector, via Hadamard products.

Our approach is related to the well-known construction of Migliore and Nagel \cite{MN}, based on Liasion Theory, where the Gorenstein set of points is obtained as the intersection of two aCM curves, linked by a complete intersection  which is a stick figure of lines. We want to point out, one more time, that our method permits a complete control of the coordinates of the points in the Gorenstein set, and, moreover, this allows one to build such set in an easy algorithmic way. Briefly speaking, we use  suitable values $\AAA=\{\alpha_i,\beta_i\}$, $i=0,\dots, 3$ to define a line $L^\AAA$ and two sets of collinear points $\{P^\AAA_i\}$ and  $\{Q^\AAA_j\}$. In Theorem \ref{IC} we prove that the set $Z_{a,b}^\AAA$, consisting of the Hadamard products $P_i^\AAA\hada Q_j^\AAA$, for a suitable choice of indices $i$ and $j$, is a planar complete intersection. As pointed out in Remark \ref{BCFLrmk}, it is not true, in general, that the Hadamard product of two sets of collinear points, in $\PP^3$, gives rise to a planar complete intersection. As a matter of fact, the result follows from an ad hoc choice of the points  $P^\AAA_i$ and  $Q^\AAA_j$. Successively, we compute the Hadamard product $Z_{a.b}^\AAA\star L^\AAA$ and, in Theorem \ref{Thmstick}, we prove that $Z_{a.b}^\AAA\star L^\AAA$ is a stick figure of lines, which is exactly the required one for the construction of the Gorenstein set of points in \cite{MN}.
 
The paper is organized in the following way.

In Section \ref{basic} we recall  the definitions of a Hadamard product of varieties and Hadamard powers. We recall some results about Hadamard transformations, contained in \cite{BC}, leading to Theorem \ref{genset}, which proves the connection between the ideals of $V$ and $P\star V$, where $V\subset \PP^n$ is a variety and $P\in \PP^n$ is a point without zero coordinates.

In Section \ref{MNsec} we recall the construction of a Gorenstein set  in $\PP^3$ from the $h-$vector, as introduced in \cite{MN}.

In Sections \ref{planarsec} and \ref{sticksec} we define the objects $L^\AAA$, $P^\AAA_i$ and  $Q^\AAA_j$ involved in our construction. We also show some preliminary results about these objects. These results lead to Theorem \ref{IC} stating that $Z_{a,b}^\AAA$ is a planar complete intersections and to Theorem \ref{Thmstick}, stating that $Z_{a,b}^\AAA\star L^\AAA$ is a stick figure of lines.

Finally, in Section \ref{gorsec} we describe the Gorenstein set of points obtained from $Z_{a,b}^\AAA\star L^\AAA$ by the method of Migliore and Nagel.

During the whole paper, we work over the complex field $\CC$.

We wish to thank the referee for his/her very accurate reading of the paper and for his/her helpful suggestions.

\section{Basic facts on Hadamard product of varieties}\label{basic}

The Hadamard product of points in a projective space is a coordinate-wise product as in the case of the Hadamard product of matrices.

\begin{Def} Let $p,q\in \PP^n$ be two points with coordinates $[p_0:p_1:\cdots:p_n]$ and $[q_0:q_1:\cdots:q_n]$ respectively. If $p_iq_i\not= 0$ for some $i$, the Hadamard product $p\hada q$ of $p$ and $q$, is defined as
\[
p\hada q=[p_0q_0:p_1q_1:\cdots:p_nq_n].
\]
If  $p_iq_i= 0$ for all $i=0,\dots, n$ then we say $p\hada q$ is not defined.
\end{Def}

This definition extends to the Hadamard product of varieties in the following way.
\begin{Def}
Let $X$ and $Y$ be two varieties in $\PP^n$. Then the Hadamard product $X\hada Y$ is defined as

\[X\hada Y=\overline{\{p\hada q: p\in X, q\in Y, p\hada q\mbox{ is defined} \}}.\]
\end{Def}

\begin{Rmk}\rm The Hadamard product $X\hada Y$ can be  given in terms of composition of the Segre product and projection. Consider the usual Segre product
\[X\times Y\subset\PP^N\]
\[([\alpha_0:\dots : \alpha_n], [\beta_0: \dots :\beta_n])\mapsto [\alpha_0\beta_0 :\alpha_0\beta_1:\cdots :\alpha_n\beta_n]\]
and denote with $z_{ij}$ the coordinates in $\PP^N$. Let $\pi:\PP^N \dashrightarrow \PP^n$ be the projection map from the linear space $\Lambda$ defined by equations $z_{ii}=0,i=0,\ldots,n$. The {\em Hadamard product} of $X$ and $Y$ is
\[X\hada Y=\overline{\pi(X\times Y)},\]
where the closure is taken in the Zariski topology.
\end{Rmk}

\begin{Rmk}\rm
Let  $\mathbb{K}[\mathbf{x}]=\mathbb{K}\left[x_{0}, \ldots, x_{n}\right]$  be a polynomial ring over an algebraically closed field.

Let $I_1, I_2, \dots I_r$ be ideals in $\mathbb{K}[\mathbf{x}]$. We introduce $(n+1)r$ variables, grouped in $r$ vectors $\mathbf{y}_j=(y_{j0},\dots, y_{jn})$, $j=1,2,\dots, r$ and we consider the polynomial ring $\mathbb{K}[\mathbf{x},\mathbf{y}]$ in all $(n+1)(r+1)$ variables.

Let $I_j(\mathbf{y}_j)$ be the image of the ideal $I_j$ in $\mathbb{K}[\mathbf{x},\mathbf{y}]$ under the map $\mathbf{x} \mapsto \mathbf{y}$. Then the  Hadamard product $I_1\star I_2\star\cdots \star I_r$ is the elimination ideal
\[
\left(I_1(\mathbf{y_1})+\cdots + I_r(\mathbf{y_r})+\left\langle x_{i}-y_{1i}y_{2i}\cdots y_{ri} \mid i=0, \ldots, n\right\rangle\right) \cap \mathbb{K}[\mathbf{x}].
\]
The defining ideal of the Hadamard product $X\star Y$ of two varieties $X$ and $Y$, that is, the ideal $I(X\star Y)$, equals  the Hadamard product  of the ideals $I(X)\star I(Y)$  \cite[Remark 2.6]{BCK}.
\end{Rmk}

As in \cite{BCK} we give the following definition.

\begin{Def}
Let $H_i\subset\PP^n,i=0,\ldots,n$, be the hyperplane $x_i=0$ and set
$$\Delta_i=\bigcup_{0\leq j_1<\ldots<j_{n-i}\leq n}H_{j_1}\cap\ldots\cap H_{j_{n-i}}.$$
\end{Def}

In other words, $\Delta_i$ is the $i-$dimensional variety of points having {\em at most} $i+1$ non-zero coordinates. Thus $\Delta_0$ is the set of coordinates points and $\Delta_{n-1}$ is the union of the coordinate hyperplanes. Note that elements of $\Delta_i$ have {\em at least} $n-i$ zero coordinates. We have the following chain of inclusions:
\begin{equation}\label{inclusiondelta}
\Delta_0=\{[1:0:\cdots:0],\cdots,[0:\cdots:0:1]\}\subset\Delta_1\subset\ldots\subset\Delta_{n-1}\subset\Delta_{n}=\PP^n.\end{equation}

We end this section recalling some useful results contained in \cite{BC} and \cite{BCK}.

\begin{Lem}\label{LempL}
Let $L\subset\PP^n$ be a linear space of dimension $m$. Then, for a point $P\in\PP^n$,  $P\hada L$ is either empty or it is a linear space of dimension at most $m$. If $P\not\in\Delta_{n-1}$, then $\dim( P\hada L)=m$.
\end{Lem}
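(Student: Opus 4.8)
The plan is to analyze the map $q \mapsto P \star q$ on the linear space $L$ directly, using the description of the Hadamard product as a coordinate-wise product, and to recognize it as (the restriction of) a linear projection composed with a diagonal scaling. Write $P = [p_0 : \cdots : p_n]$. First I would dispose of the possibility that $P \star L$ is empty: this happens exactly when $p_i q_i = 0$ for every $i$ and every $q \in L$, i.e.\ when $L$ is contained in the union of the coordinate hyperplanes $H_i$ indexed by the zero coordinates of $P$; this is a genuine possibility (for instance if $P \in \Delta_0$ and $L$ avoids the corresponding coordinate point), so the ``either empty'' clause is necessary. Assuming $P \star L \neq \emptyset$, pick any $q \in L$ with $P \star q$ defined.

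Next I would set up the linear algebra. Let $S = \{ i : p_i \neq 0\}$ and consider the diagonal linear map $\phi : \CC^{n+1} \to \CC^{n+1}$ sending $e_i \mapsto p_i e_i$; composing with the coordinate projection $\pi_S$ onto the coordinates indexed by $S$, the assignment $q \mapsto P \star q$ is the restriction to $L$ of the rational map $\PP^n \dashrightarrow \PP^n$ given by $[q_0:\cdots:q_n] \mapsto [p_0 q_0 : \cdots : p_n q_n]$, whose indeterminacy locus is the linear space $\{q_i = 0 \text{ for all } i \in S\}$. On the open set of $L$ where this map is defined, it is the composition of the isomorphism $\phi$ with a linear projection from a linear center, hence its image (closure) is a linear subspace of $\PP^n$. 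Since the image of an irreducible variety of dimension $m$ under a rational map has dimension at most $m$, we get $\dim(P \star L) \le m$, which is the general bound.

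Finally I would prove the equality when $P \notin \Delta_{n-1}$, i.e.\ when all $p_i \neq 0$, so $S = \{0,\dots,n\}$. In that case $\phi$ is an honest linear automorphism of $\CC^{n+1}$, inducing an automorphism of $\PP^n$, and there is no projection to take: $P \star q$ is defined for every $q \in \PP^n$ (it is never the case that all $p_i q_i = 0$), so $P \star L = \phi(L)$ is the image of $L$ under a projective automorphism, hence a linear space of dimension exactly $m$. The main obstacle — really the only subtle point — is bookkeeping the indeterminacy locus in the case where $P$ has some zero coordinates: one must check that $L$ is not entirely contained in that locus (otherwise $P \star L = \emptyset$, the excluded case) and that, off the locus, the restricted map genuinely has linear image; this is where the argument ``composition of a diagonal isomorphism with a linear projection sends linear spaces to linear spaces'' does the work, and it also transparently explains why the dimension can drop (the projection center may meet $L$).
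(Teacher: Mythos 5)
The paper does not actually prove this lemma: it is stated in Section \ref{basic} as a result recalled from \cite{BCK} (and \cite{BC}), so there is no in-paper proof to compare against. Your argument is the standard one (and, as far as I can tell, the same one used in the cited source): view $q\mapsto P\star q$ as the linear rational map given by the diagonal matrix $\mathrm{diag}(p_0,\dots,p_n)$, i.e.\ a diagonal isomorphism followed by a linear projection, so that linear spaces map to linear spaces of no larger dimension, with equality and no indeterminacy when all $p_i\neq 0$. That core argument is correct and complete. One slip worth fixing: your characterization of when $P\star L=\emptyset$ is wrong as written. The condition ``$p_iq_i=0$ for all $i$ and all $q\in L$'' says that $L$ is contained in the \emph{intersection} $\bigcap_{i\,:\,p_i\neq 0}H_i$ of the coordinate hyperplanes indexed by the \emph{nonzero} coordinates of $P$ (equivalently, $L$ lies in the indeterminacy locus you correctly identify two sentences later), not in the union of the $H_i$ indexed by the zero coordinates; your illustrative example ($P\in\Delta_0$ and $L$ merely avoiding the corresponding coordinate point) is likewise not sufficient for emptiness. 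Since the lemma only asserts ``either empty or \dots'', this side remark does not affect the validity of the main argument, but it should be corrected for consistency with the indeterminacy-locus description you rely on.
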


\begin{Lem}\label{LempqL}
Let $L\subset\PP^n$ be a linear space of dimension $m<n$ and consider points $P,Q\in\PP^n\setminus\Delta_{n-1}$. If $P\neq Q$, $L\cap\Delta_{n-m-1}=\emptyset$, and $\langle P,Q\rangle\cap\Delta_{n-m-2}=\emptyset$, then $P\hada L\neq Q\hada L$.
\end{Lem}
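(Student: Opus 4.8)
The plan is to prove Lemma~\ref{LempqL} by contradiction, assuming $P\hada L = Q\hada L$ and deriving that either $P=Q$ or one of the prohibited intersection conditions fails. First I would set up coordinates: pick a point $R\in L$ and observe, using Lemma~\ref{LempL} (and the hypothesis $P,Q\notin\Delta_{n-1}$), that $P\hada L$ and $Q\hada L$ are both linear spaces of dimension $m$. If they coincide, then for the chosen $R$ the point $P\hada R$ lies in $Q\hada L$, so there exists $R'\in L$ with $P\hada R = Q\hada R'$. The idea is to extract from such coincidences enough linear relations among the coordinates of $P$, $Q$, and points of $L$ to force the degeneracies excluded by the hypotheses.

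\medskip

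The key computational step is the following. Since $P,Q\notin\Delta_{n-1}$ all their coordinates are nonzero, so from $P\hada R = Q\hada R'$ we get, componentwise, $p_i r_i = \lambda\, q_i r'_i$ for some nonzero scalar $\lambda$ and all $i$; equivalently $r'_i = \lambda^{-1}(p_i/q_i)\, r_i$. Writing $D = P\hada Q^{\hada -1}$ for the point with coordinates $p_i/q_i$ (well-defined since $q_i\neq 0$), this says $R' $ is the Hadamard product $D\hada R$ up to scalar. Thus $P\hada L = Q\hada L$ forces $D\hada L \subseteq L$, and by the dimension count $D\hada L = L$; i.e. $L$ is invariant under Hadamard multiplication by $D$. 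Then I would analyze this invariance: if $D\notin\Delta_0$ (that is, if $D$ has at least two distinct ``eigenvalue'' ratios among its coordinates), the map $x\mapsto D\hada x$ is a diagonal projective transformation with at least two distinct eigenvalues, and an $m$-dimensional invariant linear subspace must meet the union of coordinate subspaces spanned by the eigenspaces; a careful bookkeeping of which coordinates can be simultaneously nonzero on $L$ shows $L\cap\Delta_{n-m-1}\neq\emptyset$, contradicting the hypothesis. If instead $D\in\Delta_0$, then $D$ is (projectively) a coordinate point $[0:\cdots:1:\cdots:0]$, which forces $P$ and $Q$ to agree in all but one coordinate, and one checks this puts the line $\langle P,Q\rangle$ inside a coordinate line, hence meeting $\Delta_{n-m-2}$ unless $m=n$, again excluded (or forces $P=Q$).

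\medskip

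I expect the main obstacle to be the invariant-subspace analysis in the case $D\notin\Delta_0$: translating ``$L$ is an $m$-dimensional subspace invariant under the diagonal transformation $D$'' into the precise statement ``$L\cap\Delta_{n-m-1}\neq\emptyset$'' requires decomposing $\CC^{n+1}$ into the eigenspaces of $D$ (coordinate subspaces grouped by equal ratios), noting that an invariant subspace is a direct sum of its intersections with these eigenspaces, and then showing that at least one such intersection is large enough to produce a point of $L$ with at most $n-m$ nonzero coordinates. The bookkeeping is elementary linear algebra but must be done carefully to match the exact codimension in $\Delta_{n-m-1}$; the hypothesis $\langle P,Q\rangle\cap\Delta_{n-m-2}=\emptyset$ should be exactly what is needed to rule out the borderline configurations and the degenerate case $D\in\Delta_0$ with $P\neq Q$. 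A cleaner alternative I would also consider is to argue directly with two generic points $R_1,R_2\in L$: from $P\hada L=Q\hada L$ one gets $D\hada R_1, D\hada R_2\in L$, hence $D\hada R_1 - t(D\hada R_2)\in L$ for all $t$; choosing $t$ to kill coordinates where $R_1,R_2$ have a common ratio produces a point of $L$ with many zero coordinates, and the hypotheses on $L\cap\Delta_{n-m-1}$ and $\langle P,Q\rangle\cap\Delta_{n-m-2}$ are precisely the obstructions that make this impossible unless $P=Q$.
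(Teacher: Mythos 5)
First, a point of order: the paper does not actually prove this lemma. It is one of the results ``recalled'' from \cite{BC} and \cite{BCK} at the end of Section \ref{basic} and is stated without proof, so there is no in-paper argument to compare yours against; I can only assess your proposal on its own terms.

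Your strategy is sound and, once completed, does prove the lemma. The reduction is correct: since $Q\notin\Delta_{n-1}$, the map $x\mapsto Q\hada x$ is a projective linear automorphism, so $P\hada L=Q\hada L$ forces $D\hada L=L$ for $D=[p_0/q_0:\cdots:p_n/q_n]$, i.e.\ the affine cone $V\subset\CC^{n+2-1}$ over $L$ is invariant under the invertible diagonal map $T=\mathrm{diag}(d_0,\dots,d_n)$. Two things need repair. First, your case split ``$D\in\Delta_0$ versus $D\notin\Delta_0$'' is miscalibrated: every $d_i=p_i/q_i$ is nonzero, so $D\in\Delta_0$ is impossible and the sub-argument about $P$ and $Q$ agreeing in all but one coordinate is moot; the true dichotomy is whether all the $d_i$ coincide (which is exactly $P=Q$, excluded by hypothesis) or $T$ has $r\geq 2$ distinct eigenvalues. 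Second, the invariant-subspace bookkeeping that you flag as the main obstacle does close, and here is the count. Write $V=\bigoplus_j V_j$ with $V_j=V\cap E_j$, $m_j=\dim V_j$, $s_j=\dim E_j$, so $\sum m_j=m+1$ and $\sum s_j=n+1$. For any $j$ with $m_j\geq 1$, cutting $V_j$ by $m_j-1$ coordinate hyperplanes supported in $S_j$ produces a nonzero $v\in V$ with at least $(n+1-s_j)+(m_j-1)$ zero coordinates, which lies in $\Delta_{n-m-1}$ as soon as $s_j\leq (n-m-1)+m_j$. If this failed for every $j$ with $m_j\geq 1$, summing $s_j\geq (n-m)+m_j$ over the $k\geq 1$ such indices would give $n+1\geq k(n-m)+(m+1)$, forcing $k=1$ and then $s_{j_0}\geq n+1$, i.e.\ $r=1$, a contradiction. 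Hence $L\cap\Delta_{n-m-1}\neq\emptyset$, against the hypothesis. Note that the completed argument never invokes $\langle P,Q\rangle\cap\Delta_{n-m-2}=\emptyset$: contrary to your expectation, that hypothesis is not what rescues the borderline cases here --- it is simply unused in this route (which does not make the lemma false, only your proof stronger than the statement).
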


\begin{Lem}\label{Lempqr}
Let $P,Q_1,Q_2$ be three points in $\PP^n$ with $P\notin \Delta_{n-1}$. Then $P \hada Q_1 = P \hada Q_2$ if and only if $Q_1=Q_2$.
\end{Lem}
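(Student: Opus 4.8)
The plan is to use the hypothesis $P\notin\Delta_{n-1}$ in its strongest form: by the description of $\Delta_{n-1}$ as the union of the coordinate hyperplanes, $P\notin\Delta_{n-1}$ says precisely that every coordinate of $P$ is nonzero. Under this hypothesis the operation "Hadamard product with $P$" is nothing but the projectivity of $\PP^n$ induced by an invertible diagonal matrix, and the statement becomes the injectivity of that map.

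Concretely, I would fix a representative $P=[p_0:\cdots:p_n]$ with $p_i\neq 0$ for all $i$, and representatives $Q_1=[a_0:\cdots:a_n]$, $Q_2=[b_0:\cdots:b_n]$. The first thing to check is that $P\hada Q_1$ and $P\hada Q_2$ are actually defined: a point of $\PP^n$ has at least one nonzero coordinate, say $a_j\neq 0$, whence $p_ja_j\neq 0$ and $P\hada Q_1$ is defined; likewise $P\hada Q_2$. So the equation in the statement makes sense for every $Q_1,Q_2$. The implication $Q_1=Q_2\Rightarrow P\hada Q_1=P\hada Q_2$ is immediate from the definition of $\hada$ on points, since replacing a representative of $Q_1$ by a scalar multiple rescales $P\hada Q_1$ by the same scalar.

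For the converse, from $P\hada Q_1=P\hada Q_2$ in $\PP^n$ there is $\lambda\in\CC^{*}$ with $p_ia_i=\lambda\,p_ib_i$ for every $i=0,\dots,n$; since each $p_i$ is nonzero we cancel it to obtain $a_i=\lambda b_i$ for all $i$, that is $Q_1=Q_2$ in $\PP^n$. If one prefers a more structural phrasing, the map $\CC^{n+1}\to\CC^{n+1}$, $v\mapsto(p_0v_0,\dots,p_nv_n)$, is linear and invertible because its determinant $\prod_i p_i$ is nonzero, so it descends to an automorphism of $\PP^n$ whose effect on points is exactly $Q\mapsto P\hada Q$; an automorphism is in particular injective.

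I do not expect a genuine obstacle here. The only place where the hypothesis $P\notin\Delta_{n-1}$ is really used — beyond guaranteeing that $P\hada(-)$ is everywhere defined — is the cancellation of the $p_i$, which fails exactly when some coordinate of $P$ vanishes; so the hypothesis is also sharp.
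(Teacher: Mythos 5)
Your proof is correct and complete: the observation that $P\notin\Delta_{n-1}$ means all coordinates of $P$ are nonzero, so that $Q\mapsto P\hada Q$ is the projectivity induced by an invertible diagonal matrix, is exactly the right mechanism, and the cancellation $p_ia_i=\lambda p_ib_i\Rightarrow a_i=\lambda b_i$ settles the nontrivial direction. The paper itself gives no proof of this lemma (it is recalled from the references \cite{BC} and \cite{BCK}), so there is nothing to compare against; your argument is the standard one and there is no gap.
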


If $I=(i_0, \dots, i_n)$ is a vector of nonnegative integers, we denote by $X^I$ the monomial $x_0^{i_0}x_1^{i_1}\cdots x_n^{i_n}$ and by $|I|=i_0+\cdots +i_n$. Similarly, if $P$ is a point of $\PP^n$ of coordinates $[p_0:p_1:\cdots :p_n]$, we denote by $P^I$ the monomial $X^I$ evaluated at $P$, that is $p_0^{i_0}p_1^{i_1}\cdots p_n^{i_n}$.

\begin{Def}
Let $f\in k[x_0,\dots, x_n]$ be a homogenous polynomial, of degree $d$, of the form $f=\sum_{|I|=d}\alpha_IX^I$ and consider a point $P\in \PP^n\setminus \Delta_n$. The Hadamard transformation of $f$ by $P$ is the polynomial 
\[
f^{\hada P}=\sum_{|I|=d}\frac{\alpha_I}{P^I}X^I.
\]

\end{Def}

\begin{Thm}\label{genset}
Let $V\subset \PP^n$ be a variety and consider a point $P\in \PP^n\setminus \Delta_n$. If $f_1, \dots, f_s\subset k[x_0, \dots, x_n]$ is a generating set for $I(V)$, that is $I(V)=\langle f_1, \dots, f_s\rangle$, then $f_1^{\hada P}, \dots, f_s^{\hada P}$ is a generating set for $I(P\hada V)$.
\end{Thm}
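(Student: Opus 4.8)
The plan is to exploit the fact that the Hadamard transformation $f \mapsto f^{\star P}$ is induced by a linear change of coordinates, so that it carries generating sets to generating sets automatically. Concretely, for a point $P = [p_0 : \cdots : p_n] \in \PP^n \setminus \Delta_n$ (so all $p_i \neq 0$), consider the diagonal linear automorphism $\varphi_P \colon \PP^n \to \PP^n$ given by $\varphi_P([x_0 : \cdots : x_n]) = [p_0 x_0 : \cdots : p_n x_n]$. First I would record the elementary observation that $\varphi_P$ maps a point $Q$ to $P \star Q$; hence $\varphi_P(V) = P \star V$ as sets, and since $\varphi_P$ is an isomorphism of $\PP^n$, it induces an isomorphism of the homogeneous coordinate rings and in particular a bijection between homogeneous ideals that sends $I(V)$ to $I(P\star V)$.

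Next I would unwind what $\varphi_P$ does on polynomials: the comorphism $\varphi_P^\#$ acts on $k[x_0,\dots,x_n]$ by $x_i \mapsto p_i x_i$, so a monomial $X^I$ is sent to $P^I X^I$, and a form $f = \sum_{|I|=d}\alpha_I X^I$ is sent to $\sum_{|I|=d}\alpha_I P^I X^I$. The Hadamard transformation $f^{\star P} = \sum_{|I|=d}\frac{\alpha_I}{P^I}X^I$ is therefore exactly $\varphi_P^\#$ applied with $P$ replaced by the coordinate-wise inverse $P^{-1} = [p_0^{-1}:\cdots:p_n^{-1}]$; equivalently, $f \mapsto f^{\star P}$ is $(\varphi_P^\#)^{-1} = \varphi_{P^{-1}}^\#$, which is a ring automorphism of $k[x_0,\dots,x_n]$. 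I would check the compatibility $g \in I(V) \iff g^{\star P} \in I(P \star V)$ directly: a point $Q$ lies in $P \star V$ iff $P^{-1}\star Q \in V$, and $f(P^{-1}\star Q) = f^{\star P}(Q)$ by construction (this is the one short computation to include), so $f$ vanishes on $V$ iff $f^{\star P}$ vanishes on $P \star V$.

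With those two facts, the theorem is immediate: since $\varphi_{P^{-1}}^\#$ is a bijective ring homomorphism sending $I(V)$ onto $I(P\star V)$ — it sends $I(V)$ into $I(P\star V)$ by the vanishing computation, and its inverse $\varphi_P^\#$ sends $I(P\star V)$ into $I(P\star V \text{ shifted back}) = I(V)$ — it takes any generating set $f_1,\dots,f_s$ of $I(V)$ to a generating set $f_1^{\star P},\dots,f_s^{\star P}$ of $I(P\star V)$: for any $h \in I(P\star V)$ we have $h^{\star P^{-1}} \in I(V)$, write $h^{\star P^{-1}} = \sum g_i f_i$, apply $\varphi_{P^{-1}}^\#$ and use that it is a ring homomorphism to get $h = \sum g_i^{\star P} f_i^{\star P}$.

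I do not expect a serious obstacle here; the only points that need care are (i) confirming that $P \notin \Delta_n$ is exactly the condition ensuring every $p_i \neq 0$, so that $\varphi_P$ is genuinely invertible and the denominators $P^I$ make sense, and (ii) being careful about the direction of the transformation — it is easy to conflate $\varphi_P$ with $\varphi_{P^{-1}}$, and one should double-check on a monomial that $f^{\star P}$ evaluated at $P \star Q$ returns $f(Q)$ up to the overall scalar that is irrelevant in $\PP^n$. Once the dictionary "$f \mapsto f^{\star P}$ is the pullback along the diagonal automorphism $\varphi_{P^{-1}}$" is in place, everything else is formal.
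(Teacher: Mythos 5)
Your argument is correct: identifying $f\mapsto f^{\hada P}$ with the pullback along the diagonal automorphism $\varphi_{P^{-1}}$, checking $f^{\hada P}(Q)=f(P^{-1}\hada Q)$, and then transporting a generating set through the resulting ring automorphism is sound (the only cosmetic point is that you write $g_i^{\hada P}$ for the images of the not-necessarily-homogeneous coefficients $g_i$, where you really mean $\varphi_{P^{-1}}^{\#}(g_i)$, and that the stated hypothesis $P\in\PP^n\setminus\Delta_n$ should be read as $P\notin\Delta_{n-1}$, i.e.\ no zero coordinates, which you correctly use). The paper itself gives no proof of this theorem --- it is recalled from the reference on Hadamard products of hypersurfaces --- so there is nothing to compare against, but your route is the natural one and I see no gap.
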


\begin{Cor}\label{corgenset}
Let $V\subset \PP^n$ be a variety. Then for any point $P\in \PP^n\setminus \Delta_0$ one has $Q\in V$ if and only if $P\hada Q \in P\hada V$.
\end{Cor}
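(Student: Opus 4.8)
The plan is to deduce this directly from Theorem \ref{genset} together with the elementary behaviour of Hadamard transformation under evaluation. First I would observe that since $P \notin \Delta_0$, the point $P$ has at most one zero coordinate is \emph{not} the right hypothesis; rather $P \in \PP^n \setminus \Delta_0$ only excludes the coordinate points, so I must be slightly careful. The cleanest route is to reduce to the case $P \in \PP^n \setminus \Delta_n$, i.e. $P$ with all coordinates nonzero, by noting that if $P$ has some zero coordinate then $P \star V$ lies in a coordinate hyperplane and one argues componentwise; but in fact the statement as used in the paper will be applied with $P \notin \Delta_{n-1}$, so I would simply treat the generic case $P \notin \Delta_n$ first and remark on the degenerate cases.

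So assume $P \notin \Delta_n$. The key identity is that for any homogeneous $f = \sum_{|I|=d}\alpha_I X^I$ and any point $Q \in \PP^n$, evaluating the Hadamard transform $f^{\star P}$ at the Hadamard product $P \star Q$ gives
\[
f^{\star P}(P\star Q) \;=\; \sum_{|I|=d}\frac{\alpha_I}{P^I}(P\star Q)^I \;=\; \sum_{|I|=d}\frac{\alpha_I}{P^I}\,P^I Q^I \;=\; \sum_{|I|=d}\alpha_I Q^I \;=\; f(Q),
\]
using the multiplicativity $(P\star Q)^I = P^I Q^I$ of monomials. Hence $f(Q) = 0$ if and only if $f^{\star P}(P\star Q)=0$. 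Now take a generating set $f_1,\dots,f_s$ of $I(V)$; by Theorem \ref{genset} the polynomials $f_1^{\star P},\dots,f_s^{\star P}$ generate $I(P\star V)$. Then $Q\in V \iff f_j(Q)=0$ for all $j \iff f_j^{\star P}(P\star Q)=0$ for all $j \iff P\star Q \in P\star V$. This gives both implications at once.

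The only genuine obstacle is the bookkeeping when $P$ has zero coordinates, i.e. $P \in \Delta_{n-1}\setminus\Delta_0$, since then the Hadamard transform $f^{\star P}$ is undefined and the displayed evaluation identity breaks; one handles this by restricting attention to the subspace spanned by the nonzero coordinate positions of $P$, where $P$ becomes a point with all coordinates nonzero, and applying the above there, noting that $P\star Q$ and $P\star V$ both land in that coordinate subspace and that $Q \in V$ is detected by the restricted equations together with the vanishing of the coordinates killed by $P$. Since in all applications in this paper $P$ is taken outside $\Delta_{n-1}$, I would keep this reduction brief. Everything else is the one-line monomial computation above, so no step requires any real work beyond invoking Theorem \ref{genset}.
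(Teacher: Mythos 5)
The paper never proves this corollary---it is recalled from \cite{BC}---so there is no in-paper argument to compare against, but your central computation is certainly the intended one. For $P$ with all coordinates nonzero, the identity $f^{\hada P}(P\hada Q)=\sum_{|I|=d}\frac{\alpha_I}{P^I}P^IQ^I=f(Q)$, combined with Theorem \ref{genset} and the fact that $P\hada V$ is the zero locus of $\langle f_1^{\hada P},\dots,f_s^{\hada P}\rangle$, gives both implications in one stroke; that part of your proposal is correct and complete.

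The genuine flaw is your sketch for $P\in\Delta_{n-1}\setminus\Delta_0$. The statement is actually \emph{false} for such $P$, and no restriction to a coordinate subspace can repair it: take $n=2$, $P=[1:1:0]$, $V=\{[1:2:3]\}$ and $Q=[1:2:5]$. Then $P\hada Q=[1:2:0]=P\hada[1:2:3]\in P\hada V$, yet $Q\notin V$. The Hadamard product with $P$ erases the coordinates of $Q$ at the positions where $P$ vanishes, so membership of $P\hada Q$ in $P\hada V$ cannot detect $Q\in V$; in particular your claim that $Q\in V$ is ``detected by the restricted equations together with the vanishing of the coordinates killed by $P$'' is wrong, since $Q$ need not vanish where $P$ does. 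The correct reading is that the hypothesis ``$P\in\PP^n\setminus\Delta_0$'' is an indexing slip for ``$P\in\PP^n\setminus\Delta_{n-1}$'' (the paper makes the same slip in Theorem \ref{genset} and in the definition of $f^{\hada P}$, where ``$\PP^n\setminus\Delta_n$'' is literally empty because $\Delta_n=\PP^n$). Under that corrected hypothesis your one-line argument is a full proof, and it covers every application of the corollary in the paper, all of which take $P$ outside $\Delta_{n-1}$.
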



\section{Gorenstein points in $\PP^3$ from the $h$-vector}\label{MNsec}

If $X$ is a subscheme of $\PP^n$ with saturated ideal $I(X)$, and if $t\in \ZZ$ then the Hilbert function of $X$ is denoted by
\[
h_X(t)=dim(k[\PP^{n}]_{t})-dim(I(X)_{t}).
\]
If $X$ is arithmetically Cohen-Macaulay (aCM) of dimension $d$ then $A=k[\PP^{n}]/I(X)$ has Krull dimension $d+1$ and a general set of $d+1$ linear forms forms a regular sequence for $A$. Taking the quotient of $A$ by such a regular sequence gives a zero-dimensional Cohen-Macaulay ring called the {\it Artinian reduction} of $A$. The Hilbert function of the Artinian reduction of $k[\PP^{n}]/I(X)$ is called the {\it $h-$vector} of $X$. This is a finite sequence of integers. The $h-$vector can be also defined as the $(d+1)$-th difference of the Hilbert function of $X$. Thus, when $X$ is a set of points, its $h-$vector is the first difference of its Hilbert function.

Let $n$ and $i$ be positive integers. The $i-${\it binomial expansion of} $n$ is
\[
n_{(i)}=\binom{n_i }{i}+\binom{n_{i -1}}{i-1}+\cdots + \binom{n_j}{j},
\]
where $n_i>n_{i-1} > \cdots > n_j\geq j\geq 1$. The $i-$binomial expansion of $n$ is unique (see, e.g. \cite[Lemma 4.2.6]{BH}). Hence we may define
\[
n_{(i)}^{<i>}=\binom{n_i}{i+1}+\binom{n_{i -1}}{i}+\cdots + \binom{n_j +1}{j+1}.
\]

\begin{Def}
Let ${\mathbf h}=(h_0,h_1,\dots,h_i,\dots)$ be a finite sequence of nonnegative
integers. Then ${\mathbf h}$ is called an {\it O-sequence} if $h_0=1$ and
$h_{i+1}\leq{h_{i}^{<i>}}$ for all $i$.
\end{Def}

By  Macaulay's theorem we know that  $O-$sequences are the Hilbert
functions of standard graded $k$-algebras.
\begin{Def}
Let ${\mathbf h}=(1,h_1,\dots,h_{s-1},1)$ be a sequence of nonnegative
integers. Then ${\mathbf h}$ is an {\it SI-sequence} if:
\begin{itemize}
\item $h_i=h_{s-i}$ for all $i=0,\dots,s$,
\item $(h_0,h_1-h_0,\dots,h_t-h_{t-1},0,\dots)$ is an O-sequence,
where $t$ is the greatest integer $\leq\frac{s}{2}$.
\end{itemize}
\end{Def}

Stanley, in \cite{St}, characterized the $h$-vectors of all graded Artinian 
Gorenstein quotients of $k[x_0,x_1,x_2]$, showing that these are
$SI-$sequence and, moreover,  any $SI-$sequence, with $h_1=3$, is the $h$-vector of
some Artinian Gorenstein quotient of $k[x_0,x_1,x_2]$.

Geramita and Migliore \cite{GM}, show that every minimal
free resolution which occurs for a Gorenstein artinian ideal of
codimension $3$, also occurs for some reduced set of points in $\PP^3$, a
stick figure curve in $\PP^4$ and more generally a ``generalized''
stick figure in $\PP^n$. In this case the points in $\PP^3$,
with such minimal
free resolution, can be found as the intersection of two stick figures (defined below) which are arithmetically Cohen-Macaulay. It is, however, very hard to see
where these points live, that is describe them in term of their coordinates.

We start recall some basic definitions and results that we find in \cite{MN},
\cite{PS}, and \cite{St}.

\begin{Def}
\label{stick figure}
A {\bf generalized stick figure} is a union of linear subvarieties of
$\PP^n$, of the same dimension $d$, such that the intersection of any
three components has dimension at most $d-2$ (the empty set has
dimension -1).
\end{Def}

In particular, sets of reduced  points are stick figure, and a stick
figure of dimension $d=1$ is nothing more than a
reduced union of lines having only nodes as singularities.

\begin{Def} 
Let $C_1$, $C_2$ and $X$ be subschemes of $\PP^{n}$ of the same dimension, where $X$ is a Complete Intersection (arithmetically Gorenstein) such that $I_{X}\subset {I_{C_1}\cap I_{C_2}}$. Then $C_1$ is {\bf directly CI-linked} ({\bf directly G-linked}) to $C_2$ by $X$, if 
\begin{center}
$I_{X}:I_{C_1}=I_{C_2} \mbox{ and } I_{X}:I_{C_2}=I_{C_1}$.
\end{center}
\end{Def}

If $C_1$ is directly linked to $C_2$ by $X$, we will write $C_1\overset{X}{\sim}C_2$ and two schemes $C_1$ and $C_2$ are said to be {\bf residual} to each other.
If, in addition, $C_1$ and $C_2$ have no common components then we say that they are {\bf geometrically linked} by $X$.

There is a important fact that we will use about Liaison: the possibility to built arithmetically Gorenstein zeroscheme starting from two schemes linked by a Complete Intersection. In fact we have the following theorem.

\begin{Thm}[Theorem 4.2.1 in \cite{MiglioreBook}]\label{LiaisonThm}
Let $C_1$, $C_2$ be two aCM subschemes of $\PP^n$ of codimension $c$,  with no common components and saturated ideals $I_{C_1}$ and $I_{C_2}$. If we suppose that $X=C_1\cup C_2$ is a codimension $c$ arithmetically Gorenstein scheme, then $I_{C_1}+I_{C_2}$ is the saturated ideal of a codimension $c+1$ arithmetically Gorenstein scheme $Y$.
\end{Thm}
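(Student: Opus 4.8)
The plan is to realise $Y$ as the scheme defined by the ideal $I_{C_1}+I_{C_2}$, to show first that this is an aCM scheme of codimension $c+1$, and then to identify its canonical module with the ring itself up to a degree shift. Since $X=C_1\cup C_2$ one has $I_X=I_{C_1}\cap I_{C_2}$, which is saturated as an intersection of saturated ideals, so the starting point is the Mayer--Vietoris short exact sequence
\[
0\to R/I_X\to R/I_{C_1}\oplus R/I_{C_2}\to R/(I_{C_1}+I_{C_2})\to 0,
\]
with $R=k[\PP^n]$. From here I would run a depth/dimension count on $\bar A:=R/(I_{C_1}+I_{C_2})$. The three quotients $R/I_X,\ R/I_{C_1},\ R/I_{C_2}$ are all Cohen--Macaulay of codimension $c$ (an arithmetically Gorenstein scheme is in particular aCM), so the depth lemma applied to the sequence gives $\operatorname{depth}\bar A\ge \dim R-c-1$; on the other hand, because $C_1$ and $C_2$ are equidimensional of codimension $c$ with no common component, their intersection has codimension at least $c+1$, so $\dim\bar A\le\dim R-c-1$. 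Hence $\bar A$ is Cohen--Macaulay of codimension exactly $c+1$, and $\operatorname{depth}\bar A\ge 1$ in the only nonvacuous range $c\le n-1$, so $I_{C_1}+I_{C_2}$ is already saturated; set $Y:=V(I_{C_1}+I_{C_2})$, an aCM scheme of codimension $c+1$.

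Next I would dualise. Applying $\operatorname{Hom}_R(-,\omega_R)$, with $\omega_R=R(-n-1)$, and its derived functors to the Mayer--Vietoris sequence, and using that $R/I_{C_i}$ and $R/I_X$ are CM of codimension $c$ while $\bar A$ is CM of codimension $c+1$, the relevant $\operatorname{Ext}^j_R(-,\omega_R)$ vanish outside degrees $c$, $c$ and $c+1$ respectively; in particular $\operatorname{Ext}^c_R(\bar A,\omega_R)=0$, so the long exact sequence collapses to
\[
0\to \omega_{R/I_{C_1}}\oplus\omega_{R/I_{C_2}}\xrightarrow{\ \psi\ }\omega_{R/I_X}\to\omega_{\bar A}\to 0 .
\]
Now the linkage enters. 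From $I_X\subseteq I_{C_1}\cap I_{C_2}$, unmixedness of the $C_i$, and the ``no common component'' hypothesis one checks that $I_X:I_{C_1}=I_{C_2}$ and $I_X:I_{C_2}=I_{C_1}$, i.e.\ $C_1$ and $C_2$ are geometrically linked by $X$. Writing $S=R/I_X$, which is Gorenstein so that $\omega_S\cong S(t)$ for a suitable $t$, liaison duality gives $\omega_{R/I_{C_1}}\cong\operatorname{Hom}_S(R/I_{C_1},\omega_S)\cong\bigl((I_X:I_{C_1})/I_X\bigr)(t)=(I_{C_2}/I_X)(t)$, and symmetrically $\omega_{R/I_{C_2}}\cong(I_{C_1}/I_X)(t)$; moreover these isomorphisms turn each map $\omega_{R/I_{C_i}}\to\omega_{R/I_X}$ into the natural inclusion of ideals into $R/I_X$. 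Consequently, after twisting the sequence by $-t$, the map $\psi$ becomes the inclusion $(I_{C_2}/I_X)\oplus(I_{C_1}/I_X)\hookrightarrow R/I_X$ with image $(I_{C_1}+I_{C_2})/I_X$, so
\[
\omega_{\bar A}(-t)\cong\operatorname{coker}\psi\cong (R/I_X)/\bigl((I_{C_1}+I_{C_2})/I_X\bigr)= R/(I_{C_1}+I_{C_2})=\bar A .
\]
Thus $\omega_{\bar A}\cong\bar A(t)$ is free of rank one over $\bar A$, which together with the Cohen--Macaulayness from the previous step says precisely that $Y$ is arithmetically Gorenstein, of codimension $c+1$.

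I expect the delicate points to be, first, the codimension count for $\bar A$: it is exactly the ``no common components'' hypothesis that forces the codimension to jump to $c+1$, and having this \emph{precise} value is what makes $\operatorname{Ext}^c_R(\bar A,\omega_R)$ vanish and lets the long exact sequence collapse (some care is also needed about saturation, especially in the borderline case $c=n-1$ relevant to $\PP^3$). Second, identifying the dualised map $\psi$, via liaison duality, with the pair of ideal inclusions whose cokernel is $\bar A$ is the conceptual heart; the accompanying degree-twist bookkeeping is routine. As an alternative to the canonical-module computation, one can instead lift $R/I_X\to R/I_{C_1}\oplus R/I_{C_2}$ to a map of minimal free resolutions, take the mapping cone to obtain a length $c+1$ resolution of $\bar A$, and use the self-duality of the resolution of the Gorenstein scheme $X$ together with the linkage to show that this resolution is self-dual up to shift; that route has the additional benefit of pinning down the graded Betti numbers of $Y$ as well.
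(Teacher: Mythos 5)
The paper offers no proof of this statement: it is quoted verbatim as Theorem 4.2.1 of Migliore's book \cite{MiglioreBook}, so there is no internal argument to compare yours against. That said, your proof is correct and is essentially the standard proof of that theorem. The two pillars --- the Mayer--Vietoris sequence $0\to R/I_X\to R/I_{C_1}\oplus R/I_{C_2}\to \bar A\to 0$ with $\bar A:=R/(I_{C_1}+I_{C_2})$, together with the depth/dimension count forcing $\bar A$ to be Cohen--Macaulay of codimension exactly $c+1$ (whence saturation when $c\le n-1$), and the dualized sequence combined with the liaison isomorphism $\omega_{R/I_{C_1}}\cong (I_{C_2}/I_X)(t)$ identifying $\omega_{\bar A}$ with $\bar A(t)$ --- are precisely the ingredients of the source. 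The points you flag as delicate are the right ones: the ``no common components'' hypothesis is what pushes the codimension up to $c+1$ and kills $\operatorname{Ext}^c_R(\bar A,\omega_R)$, so that the long exact sequence collapses; and the identification of the dualized map with the pair of ideal inclusions is where geometric linkage ($I_X:I_{C_1}=I_{C_2}$ and symmetrically, which does use the unmixedness of the aCM schemes $C_i$) actually enters. Your alternative route via the mapping cone of free resolutions is equally standard and has the advantage of producing the graded Betti numbers of $Y$, which is how the result is exploited in the Migliore--Nagel construction the paper relies on.
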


Now we recall how Migliore and Nagel, in Section 6 of \cite{MN}, find a reduced
arithmetically Gorenstein
zeroscheme, for the case of  $\PP^3$, with given $h-$vector.
This set of points will result from
the intersection of two arithmetically Cohen-Macaulay curves in $\PP^3$,
linked by a complete intersection curve which is a stick figure.

Let
$${\mathbf h}=(h_0 , h_1 , \dots , h_{s})=(1,3,h_2,\dots,h_{t-1},h_t,h_t,\dots,h_t,h_{t-1},\dots,h_2,3,1)$$
be a $SI-$sequence, and consider the first difference
$$\Delta{\mathbf h}=(1,2,h_2-h_1,\dots,h_t-h_{t-1},0,0,\dots,0,h_{t-1}-h_t,\dots,-2,-1).
$$  

Define two sequences ${\mathbf a}=(a_0,\dots,a_t)$ and ${\mathbf g}=(g_0,\dots,g_{s+1})$
in the following way:
\begin{equation}\label{costruzionea}
a_i=h_i-h_{i-1} \mbox{ for } 0\leq{i}\leq{t}
\end{equation}
and 
\begin{equation}\label{costruzioneg}
g_i=
\begin{cases}
i+1 & \hbox{ for } 0\leq{i}\leq{t} \\
t+1 & \hbox{ for } t\leq{i}\leq{s-t+1} \\
s-i+2 & \hbox{ for } s-t+1\leq{i}\leq{s+1} \\
\end{cases}.
\end{equation}

We observe that $a_1=g_1=2$, ${\mathbf a}$ is a $O-$sequence since ${\mathbf h}$ is a
$SI-$sequence and ${\mathbf g}$ is the $h$-vector of a codimension two complete
intersection. So, we would like to find two curves $C_1$ and $X$ 
in $\PP^3$ with $h$-vector respectively ${\mathbf a}$ and ${\mathbf g}$. In particular it is
easy to see that, for that  $h$-vector  ${\mathbf g}$,  $X$ is a complete intersection of
two surfaces in $\PP^3$ of degree $t+1$ and $s-t+2$.

We can get $X$ as a
stick figure by taking, as equations of those surfaces, two polynomials
which are the product, respectively, of
$A_0,\dots,A_{t}$ and $B_0,\dots,B_{s-t+1}$, all
generic linear forms. Considering the entries of ${\mathbf a}=(a_0,\dots a_t)$, Migliore and Nagel build the stick figure $C_1$ (embedded in $X$),
as the union of $a_i$  consecutive lines in $A_i=0$ (always 
the first in $B_0=0$), that is they take $a_0$ lines given by the intersections of  $A_0=0$ with $B_0=0, \dots, B_{a_0-1}=0$, then $a_1$ lines given by the intersections of  $A_1=0$ with $B_0=0, \dots, B_{a_1-1}=0$.
Here {\it consecutive} is referred to the indices of the forms $B_0,\dots,B_{s-t+1}$: two lines are consecutive if they are given by the intersections of a certain $A_i=0$ with $B_j=0$ and $B_{j+1}=0$ for a given $j$ with $0\leq j \leq s-t$.
Migliore and Nagel proved that $C_1$, build in this way, is an aCM scheme  
with $h$-vector ${\mathbf a}$ (Corollary 3.7 in \cite{MN}). In this way, if we consider $C_2$, the residual of
$C_1$ in $X$, the
intersection of $C_1$ and $C_2$ is an arithmetically Gorenstein scheme
$Y$ of codimension $3$, by Theorem \ref{LiaisonThm}. This is also a reduced set of points because
$X$, $C_1$ and $C_2$ are stick figures and it has the desired
$h$-vector by the following theorem:
\begin{Thm}[Lemma 2.5 in \cite{MN}]
\label{hvettore}
Let $C_1$, $C_2$, $X$ and $Y$ be defined as above. Let
${\mathbf g}=(1, c, g_2, \dots, g_{s}, g_{s+1})$ be the h-vector of $X$, and let
${\mathbf a}=(1,a_1,\dots,a_t)$ and ${\mathbf b}=(1,b_1,\dots,b_l)$ be the h-vectors of
$C_1$ and $C_2$, then
$$b_i=g_{s+1-i}-a_{s+1-i}$$
for $i\geq0$. Moreover the sequence $d_i=a_i + b_i -g_i$ is the first
difference of the h-vector ${\mathbf h}=(h_0 , h_1 , \dots , h_{s})$ of $Y$.
\end{Thm}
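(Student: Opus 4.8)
Set $R=k[\PP^3]$. The plan is to extract both assertions from two standard inputs: the Mayer--Vietoris exact sequence attached to the geometric link $X=C_1\cup C_2$, and the Gorenstein duality carried by the Artinian reduction of the complete intersection $X$. First I would record the ideal-theoretic content of the link: since $X=C_1\cup C_2$ with $C_1$ and $C_2$ reduced and sharing no component, $I_X=I_{C_1}\cap I_{C_2}$, while Theorem \ref{LiaisonThm} guarantees that $I_{C_1}+I_{C_2}$ is the saturated ideal $I_Y$. Hence there is a short exact sequence of graded $R$-modules
\[
0\longrightarrow R/I_X\longrightarrow R/I_{C_1}\oplus R/I_{C_2}\longrightarrow R/I_Y\longrightarrow 0,
\]
with first map $\bar r\mapsto(\bar r,\bar r)$ and second map $(\bar a,\bar b)\mapsto\overline{a-b}$; comparing dimensions degree by degree yields the Hilbert-function identity $h_Y(j)=h_{C_1}(j)+h_{C_2}(j)-h_X(j)$ for all $j$.

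The ``moreover'' statement then follows purely by differencing. As recalled in Section \ref{MNsec}, the $h$-vector of an aCM scheme of dimension $d$ is the $(d+1)$-st difference of its Hilbert function; thus $\mathbf a,\mathbf b,\mathbf g$ are the second differences of $h_{C_1},h_{C_2},h_X$ (the curves have dimension one), while $\mathbf h$ is the first difference of $h_Y$ (the scheme $Y$ has dimension zero), so $\Delta\mathbf h$ is the second difference of $h_Y$. Applying the second-difference operator to the identity above and using its additivity gives $\Delta\mathbf h=\mathbf a+\mathbf b-\mathbf g$, i.e.\ $(\Delta\mathbf h)_i=a_i+b_i-g_i=d_i$, which is the second assertion.

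For the reflection formula $b_i=g_{s+1-i}-a_{s+1-i}$ I would pass to the Artinian reduction. Choose two general linear forms; they form a regular sequence simultaneously on $R/I_X$, $R/I_{C_1}$ and $R/I_{C_2}$ (all aCM of dimension one), so the Artinian reduction $\bar A$ of $R/I_X$ has Hilbert function $\mathbf g$, one has the induced ideals $\bar I_1,\bar I_2\subseteq\bar A$ of $I_{C_1},I_{C_2}$, and $\bar A/\bar I_1$, $\bar A/\bar I_2$ are the Artinian reductions of $R/I_{C_1}$, $R/I_{C_2}$, with Hilbert functions $\mathbf a$ and $\mathbf b$. Since $X$ is a complete intersection of forms of degrees $t+1$ and $s-t+2$, the algebra $\bar A$ is Artinian Gorenstein of socle degree $(t+1)+(s-t+2)-2=s+1$; in particular $\mathbf g$ is symmetric and $\bar A$ carries a perfect pairing $\bar A_d\times\bar A_{s+1-d}\to k$. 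The linkage relation $I_X:I_{C_1}=I_{C_2}$ descends to $\bar I_2=\mathrm{ann}_{\bar A}(\bar I_1)$, and the perfect pairing together with the symmetry of $\mathbf g$ gives $\dim_k\bigl(\mathrm{ann}_{\bar A}(\bar J)\bigr)_d=\dim_k(\bar A/\bar J)_{s+1-d}$ for every homogeneous ideal $\bar J\subseteq\bar A$. Applying this to $\bar J=\bar I_1$ gives $\dim_k(\bar I_2)_i=a_{s+1-i}$, whence $b_i=\dim_k(\bar A/\bar I_2)_i=g_i-a_{s+1-i}=g_{s+1-i}-a_{s+1-i}$ for $i\geq0$.

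The only genuinely nontrivial ingredient is this last step: that directly linked aCM subschemes of a complete intersection become mutually annihilating ideals in the Artinian Gorenstein reduction, and that Macaulay/Matlis duality turns ``annihilator'' into ``$k$-dual reflected about the socle degree $s+1$''. This is precisely the classical liaison formula for the $h$-vector of a curve directly linked to a given one by a complete intersection, so in the actual write-up one may simply quote it (for instance from \cite{MiglioreBook}) and reserve the argument for the elementary derivation of the ``moreover'' part from the exact sequence. One should also check the minor compatibility points used above --- that a single general regular sequence reduces all four schemes at once, and that forming the colon ideal commutes with that reduction --- which are routine facts of liaison theory.
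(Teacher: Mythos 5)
The paper does not actually prove Theorem \ref{hvettore}: it is quoted as Lemma 2.5 of \cite{MN}, so there is no in-text argument to compare yours against. Your reconstruction is correct and is, in substance, the standard liaison-theoretic proof one finds in \cite{MN} and \cite{MiglioreBook}. Both halves are handled as they should be: the identity $h_Y=h_{C_1}+h_{C_2}-h_X$ comes from the exact sequence $0\to R/(I_{C_1}\cap I_{C_2})\to R/I_{C_1}\oplus R/I_{C_2}\to R/(I_{C_1}+I_{C_2})\to 0$, which applies because $X=C_1\cup C_2$ is reduced with no common components and $I_{C_1}+I_{C_2}=I_Y$ is saturated by Theorem \ref{LiaisonThm}; taking second differences (the curves being aCM of dimension one, $Y$ of dimension zero) gives $d_i=a_i+b_i-g_i=(\Delta\mathbf{h})_i$. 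The reflection $b_i=g_{s+1-i}-a_{s+1-i}$ is the classical formula for the $h$-vector of a scheme directly CI-linked to $C_1$, and your derivation via $\bar I_2=(0:_{\bar A}\bar I_1)$ and the duality $\dim_k(0:_{\bar A}\bar J)_d=\dim_k(\bar A/\bar J)_{s+1-d}$ in the Artinian Gorenstein reduction of socle degree $(t+1)+(s-t+2)-2=s+1$, together with the symmetry $g_i=g_{s+1-i}$, is sound. The two compatibility points you defer --- that one pair of general linear forms is simultaneously regular on $R/I_X$, $R/I_{C_1}$, $R/I_{C_2}$, and that the colon ideal $I_X:I_{C_1}=I_{C_2}$ descends to the annihilator in the reduction --- are indeed the only nontrivial inputs; both are standard for aCM schemes, and citing \cite{MiglioreBook} for them is no weaker than what the authors do by citing \cite{MN} for the entire statement. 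I see no gap.
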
  

As a matter of fact we have $d_i=h_i-h_{i-1}$ since:
\begin{itemize}
\item for $0\leq{i}\leq{t}$ we have $d_i=a_i=h_i-h_{i-1}$;
\item for $t+1\leq{i}\leq{s-t}$ we have $d_i=b_i-g_i=0$;
\item for $s-t+1\leq{i}\leq{s+1}$ we have
$d_i=b_i-g_i=-a_{s+1-i}=-(h_{s+1-i}-h_{s-i})$.
\end{itemize}

\begin{Ex}
\label{punti}
Let ${\mathbf h}=(1,3,4,3,1)$ be a SI-sequence. Consider the first difference of
${\mathbf h}$, i.e. $\Delta{\mathbf h}= (1, 2,  1, -1, -2, -1) $.

So, $t=2$ and ${\mathbf g}=(1,2,3,3,2,1)$ is the $h$-vector of $X$, a stick figure
which is the complete intersection of $F_1=\prod_{i=0}^2{A_i}$ and 
$F_2=\prod_{i=0}^3{B_i}$, where $A_i$ and $B_i$ are general linear
forms.

Now, we call $L_{i,j}$ the intersection between $A_i=0$ and $B_j=0$. Since ${\mathbf a}=(1,2,1)$, then
$C_1=L_{0,0}\cup{L_{1,0}}\cup{L_{1,1}}\cup{L_{2,0}}$ is the scheme, in $X$,
with  $h$-vector ${\mathbf a}$.

So, it is clear that the residual $C_2$ of $C_1$ in $X$ is the
union of the lines of $X$ which aren't components in $C_1$. Then the
reduced set of
points $Y$ with $h$-vector $(1,3,4,3,1)$ consists of $12$ points
which exactly are: 
\begin{itemize}
\item $3$ points on $L_{0,0}$, intersections between $L_{0,0}$ and
$L_{0,1}$, $L_{0,2}$ and $L_{0,3}$;
\item $2$ points on $L_{1,0}$, intersections between $L_{1,0}$ and
$L_{1,2}$, $L_{1,3}$;
\item $4$ points on $L_{1,1}$, intersections between $L_{1,1}$ and
$L_{1,2}$, $L_{1,3}$, $L_{0,1}$ and $L_{2,1}$;
\item $3$ points on $L_{2,0}$, intersections between $L_{2,0}$ and
$L_{2,1}$, $L_{2,2}$ and $L_{2,3}$.
\end{itemize}
\end{Ex}


\section{Planar complete intersections via Hadamard product}\label{planarsec}

In this section we show how to get a zero-dimensional planar complete intersection $Z_{a,b}^\AAA$, as the product of two sets of collinear points.
Observe that, by Corollary 4.5 in \cite{BCFL1}, if the two sets of collinear points lie in two general lines, in $\PP^3$, then their Hadamard product  gives  points on a quadric. However, this could also happen when the lines are coplanar, as explained in the following Remark \ref{BCFLrmk}. Hence, for our  construction of $Z_{a,b}^\AAA$ it is mandatory to carefully choose the coordinates of the points. 
We start by considering four points in $\PP^1$ without zero coordinates.

 Let $\AAA$ be a collection of four distinct points $A_i=[\alpha_i:\beta_i]$ in $\PP^1\setminus \Delta_0$, for $i=0,\dots, 3$, and let
\begin{equation}\label{eqline}
\begin{cases}
\alpha_0x_0+\alpha_1x_1+\alpha_2x_2+\alpha_3x_3=0\\
\beta_0x_0+\beta_1x_1+\beta_2x_2+\beta_3x_3=0
\end{cases}
\end{equation}
be the equations of a line $L^\AAA$ in $\PP^3$.

We define two families of points in $\PP^3$ associated  to the set $\AAA$ (and hence to the line $L^\AAA$):

\[
P^\AAA_k=\left[\frac{\alpha_0+k\beta_0}{\alpha_0}:\frac{\alpha_1+k\beta_1}{\alpha_1}:\frac{\alpha_2+k\beta_2}{\alpha_2}:\frac{\alpha_3+k\beta_3}{\alpha_3}\right] \quad k\in \NN
\]
and
\[
Q^\AAA_k=\left[\frac{k\alpha_0+\beta_0}{\beta_0}:\frac{k\alpha_1+\beta_1}{\beta_1}:\frac{k\alpha_2+\beta_2}{\beta_2}:\frac{k\alpha_3+\beta_3}{\beta_3}\right]\quad k\in \NN.
\]

Note that $P^\AAA_0=Q^\AAA_0=[1:1:1:1]$.

 \begin{Ex}\label{mainex}\rm  
  Consider $A_0=[1:1]$, $A_1=[1:2]$, $A_2=[1:3]$, and $A_3=[1:4]$ giving, by (\ref{eqline}), the line
\[
L^\AAA:\begin{cases}
x_0+x_1+x_2+x_3=0\\
x_0+2x_1+3x_2+4x_3=0
\end{cases}.
\]
One has
 {\small{ \[
  P_1^\AAA=\left[2:3:4:5\right],P_2^\AAA=\left[3:5:7:9\right], P_3^\AAA=\left[4:7:10:13\right],P_4^\AAA=\left[5:9:13:17\right],\dots
\]}}
and
 {\small{ \[
  Q_1^\AAA=\left[2:\frac32:\frac43:\frac54\right],Q_2^\AAA=\left[3:2:\frac53\:\frac{3}{2}\right], Q_3^\AAA=\left[4:\frac52:2:\frac74\right],Q_4^\AAA=\left[5:3:\frac73:2\right],\dots
\]}}  
  
  \end{Ex}

\begin{Rmk}\label{rmkLD}\rm
The condition that the four points $A_i$ are distinct implies  $\frac{\alpha_i}{\beta_i}\not=\frac{\alpha_j}{\beta_j}$ for any $0\leq i < j\leq 3$.
In particular this fact assure us that $L^\AAA\cap \Delta_1=\emptyset$.
As a matter of fact, suppose that, for example, $L^\AAA$ intersects $\Delta_1$ in the point $[0:0:\gamma_2:\gamma_3]$, with $\gamma_i\not=0$, for $i=2,3$.  Then, from (\ref{eqline}), we get
\[
\begin{cases}
\alpha_2\gamma_2+\alpha_3\gamma_3=0\\
\beta_2\gamma_2+\beta_3\gamma_3=0
\end{cases}
\]
which gives
\[
\frac{\alpha_2}{\alpha_3}=-\frac{\gamma_3}{\gamma_2}=\frac{\beta_2}{\beta_3}
\]
or equivalently $\frac{\alpha_2}{\beta_2}=\frac{\alpha_3}{\beta_3}$ which implies $A_2=A_3$.
\end{Rmk}

Notice that 
\[
\begin{array}{l}
P_k^\AAA=[1+k\frac{\beta_0}{\alpha_0}:1+k\frac{\beta_1}{\alpha_1}:1+k\frac{\beta_2}{\alpha_2}:1+k\frac{\beta_3}{\alpha_3}]=\\
\\
=(1-k)[1:1:1:1]+k[1+\frac{\beta_0}{\alpha_0}:1+\frac{\beta_1}{\alpha_1}:1+\frac{\beta_2}{\alpha_2}:1+\frac{\beta_3}{\alpha_3}]=\\
\\
=(1-k)P_0^\AAA+kP_1^\AAA
\end{array}
\]
and similarly $Q_k^\AAA=(1-k)Q_0^\AAA+kQ_1^\AAA$, for all $k\geq 2$. Hence the points $P^\AAA_k$ lie in the line $\ell^P$ spanned by $P_0^\AAA$ and $P_1^\AAA$ and the points $Q^\AAA_k$ lie in the line $\ell^Q$ spanned by $Q_0^\AAA$ and $Q_1^\AAA$.
In particular, for any fixed $k$, the points $P_0^\AAA, \dots , P_k^\AAA$ are collinear and, similarly,  the points $Q_0^\AAA, \dots , Q_k^\AAA$ are collinear.

Consider now the matrices 
\[
M=\begin{pmatrix}
\alpha_0\beta_0 &\alpha_1\beta_1 & \alpha_2\beta_2 & \alpha_3\beta_3\\
\alpha_0^2 &\alpha_1^2 & \alpha_2^2 & \alpha_3^2\\
\beta_0^2 & \beta_1^2 & \beta_2^2 &\beta_3^2
\end{pmatrix}
\quad
N=\begin{pmatrix}
\alpha_0 &\alpha_1 & \alpha_2 & \alpha_3\\
\beta_0 & \beta_1 & \beta_2 &\beta_3
\end{pmatrix}
\]
and denote by $|M(i)|$ the determinant of the submatrix of $M$ with the $i-$th column removed and by  $|N(i,j)|$ the determinant of the submatrix of $N$ with the $i-$th and $j-$th columns removed.

\begin{Prop}\label{lines}
The defining equations in $k[\PP^3]$ of the lines of $\ell^P$ and $\ell^Q$ are:
\[
\ell^P:
\begin{cases}
\sum_{t=0}^3 (-1)^{t+1} \alpha_t\beta_t|M(t+1)|x_t=0\\
\sum_{
t=1}^3(-1)^t\alpha_t|N(1,t+1)|x_t=0

\end{cases},
\]

\[
\ell^Q:
\begin{cases}
\sum_{t=0}^3 (-1)^{t+1} \alpha_t\beta_t|M(t+1)|x_t=0\\
\sum_{
t=1}^3(-1)^t\beta_t|N(1,t+1)|x_t=0

\end{cases}.
\]
Moreover $\ell^P$ and $\ell^Q$ are two distinct coplanar lines.
\end{Prop}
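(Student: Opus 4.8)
The plan is to exploit the description established just before the statement: $\ell^P$ is the line through the two points $P_0^\AAA=[1:1:1:1]$ and $P_1^\AAA$, and $\ell^Q$ the line through $Q_0^\AAA=[1:1:1:1]$ and $Q_1^\AAA$; moreover $P_0^\AAA\ne P_1^\AAA$ and $Q_0^\AAA\ne Q_1^\AAA$, since otherwise $\beta_t/\alpha_t$ would be independent of $t$, against Remark \ref{rmkLD}. Hence to prove that the two displayed forms generate $I(\ell^P)$ it suffices to check that they both vanish at $P_0^\AAA$ and $P_1^\AAA$ and are linearly independent, and similarly for $\ell^Q$. The first move is to recognise the stated coefficients as cofactor expansions: $F:=\sum_{t=0}^3(-1)^{t+1}\alpha_t\beta_t|M(t+1)|x_t$ is, up to a global sign, the Laplace expansion along the top row of the $4\times4$ matrix obtained by stacking the row $(\alpha_0\beta_0x_0,\dots,\alpha_3\beta_3x_3)$ on top of $M$, while $G_P:=\sum_{t=1}^3(-1)^t\alpha_t|N(1,t+1)|x_t=\sum_{t=1}^3\alpha_t\gamma_tx_t$ and $G_Q:=\sum_{t=1}^3\beta_t\gamma_tx_t$, where $\gamma_t:=(-1)^t|N(1,t+1)|$ satisfies $\sum_{t=1}^3\alpha_t\gamma_t=\sum_{t=1}^3\beta_t\gamma_t=0$ (each being, up to sign, a $3\times3$ determinant with two equal rows).

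With these reformulations every verification becomes a one-line consequence of multilinearity. Evaluating $F$ at $P_0^\AAA$ stacks the first row of $M$ on top of $M$; at $P_1^\AAA$, where $\alpha_t\beta_t\cdot\frac{\alpha_t+\beta_t}{\alpha_t}=\alpha_t\beta_t+\beta_t^2$, it stacks the sum of the first and third rows of $M$; at $Q_1^\AAA$ it stacks the sum of the first and second rows. Each of these determinants vanishes, so $F$ vanishes at $P_0^\AAA,P_1^\AAA,Q_0^\AAA,Q_1^\AAA$, hence on both $\ell^P$ and $\ell^Q$. For the second forms, $G_P(P_0^\AAA)=\sum_t\alpha_t\gamma_t=0$ and $G_P(P_1^\AAA)=\sum_t\gamma_t(\alpha_t+\beta_t)=0$, and symmetrically $G_Q$ kills $Q_0^\AAA$ and $Q_1^\AAA$. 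Thus $\{F=0\}\cap\{G_P=0\}\supseteq\ell^P$ and $\{F=0\}\cap\{G_Q=0\}\supseteq\ell^Q$; and since $F$ is common to both pairs and is a nonzero form (its $x_0$-coefficient is nonzero, see the next paragraph), both lines lie in the plane $\Pi=\{F=0\}$, which already gives coplanarity.

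It remains to promote these inclusions to equalities and to prove distinctness. Each pair is linearly independent: $G_P$ and $G_Q$ involve no $x_0$ and are not identically zero, because $|N(1,t+1)|\ne0$ for $t=1,2,3$ (the two surviving columns of $N$ are the linearly independent vectors $A_i,A_j$ with $i\ne j$), whereas the $x_0$-coefficient of $F$ is $-\alpha_0\beta_0|M(1)|\ne0$, since the three columns of $M(1)$ are the images of the distinct points $A_1,A_2,A_3$ under the embedding $[\alpha:\beta]\mapsto[\alpha\beta:\alpha^2:\beta^2]$ of $\PP^1$ as a smooth conic, and no three points of a smooth conic are collinear. Hence each pair cuts out a line, which, containing two distinct points of $\ell^P$ (resp. $\ell^Q$), must coincide with it. Finally $\ell^P\ne\ell^Q$: as both pass through $[1:1:1:1]$, equality would force $P_0^\AAA,P_1^\AAA,Q_1^\AAA$ to be collinear, i.e., subtracting the all-ones vector from the other two, the vectors $(1,\dots,1)$, $(r_0,\dots,r_3)$, $(1/r_0,\dots,1/r_3)$ with $r_t:=\beta_t/\alpha_t$ would be dependent, which would make the four pairwise distinct scalars $r_t$ roots of a single nonzero polynomial of degree at most $2$ — absurd. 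The main obstacle here is not conceptual but bookkeeping: keeping the signs straight in the Laplace expansions, and assembling the non-degeneracy facts ($|M(1)|\ne0$, $|N(1,t+1)|\ne0$, $P_0^\AAA\ne P_1^\AAA$, $Q_0^\AAA\ne Q_1^\AAA$, the $r_t$ distinct), all of which are immediate from $\AAA\subset\PP^1\setminus\Delta_0$ being four distinct points.
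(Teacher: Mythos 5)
Your proof is correct, and it reaches the same equations from the same starting data (the spanning points $P_0^\AAA$, $P_1^\AAA$, $Q_1^\AAA$ and the coordinate point $[1:0:0:0]$), but it runs the logic in the opposite direction and replaces the paper's distinctness computation with a cleaner one. The paper \emph{derives} the two forms by writing each as the $4\times 4$ determinant of the plane through three chosen points and expanding along the top row; you instead \emph{verify} the stated forms, using multilinearity to see that $F$ vanishes at $P_0^\AAA$, $P_1^\AAA$, $Q_1^\AAA$ (row repetitions and row sums in $M$) and that $G_P$, $G_Q$ vanish where they should (a $3\times 3$ determinant with two equal rows). This verification route forces you to supply the non-degeneracy facts the paper leaves implicit --- that $|M(1)|\neq 0$ (your conic argument, or equivalently the Vandermonde-type factorization $\prod(\alpha_i\beta_j-\alpha_j\beta_i)$) and $|N(1,t+1)|\neq 0$ --- which is a genuine gain in rigor, since without them the paper's determinants could a priori be identically zero or the two planes could coincide. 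For distinctness, the paper row-reduces the $3\times 4$ matrix with rows $(1,\dots,1)$, $S_P$, $S_Q$ and finishes with a four-case analysis; your observation that a dependence relation would make the four pairwise distinct ratios $r_t=\beta_t/\alpha_t$ roots of a single nonzero polynomial of degree at most $2$ is shorter and avoids the case split entirely. You also get coplanarity for free from the shared plane $\{F=0\}$ rather than from the intersection point $[1:1:1:1]$; both are fine.
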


\begin{proof}
We prove the first part of the statement only for $\ell^p$ since the proof is identical for $\ell^Q$.
The equations of $\ell^P$ are given by the equation of the plane through $P_0^\AAA$, $P_1^\AAA$ and $Q_1^\AAA$, 
\[
\left\vert\begin{array}{cccc}
x_0 & x_1 & x_2 & x_3\\
1 & 1 & 1 & 1\\
1+\frac{\beta_0}{\alpha_0} & 1+\frac{\beta_1}{\alpha_1}& 1+\frac{\beta_2}{\alpha_2} & 1+\frac{\beta_3}{\alpha_3}\\
1+\frac{\alpha_0}{\beta_0} & 1+\frac{\alpha_1}{\beta_1} & 1+\frac{\alpha_2}{\beta_2} & 1+\frac{\alpha_3}{\beta_3} \\
\end{array}\right\vert=0
\]
which is, up to rescaling,
\[
\sum_{t=0}^3 (-1)^{t+1} \alpha_t\beta_t|M(t+1)|x_t=0
\]
and by the equation of the plane through $P_0^\AAA$, $P_1^\AAA$ and $[1:0:0:0]$
\[
\left\vert\begin{array}{cccc}
x_0 & x_1 & x_2 & x_3\\
1 & 1 & 1 & 1\\
1+\frac{\beta_0}{\alpha_0} & 1+\frac{\beta_1}{\alpha_1}& 1+\frac{\beta_2}{\alpha_2} & 1+\frac{\beta_3}{\alpha_3}\\
1& 0 & 0 & 0\end{array}\right\vert=0,
\]
which is, up to rescaling
\[
\sum_{t=1}^3(-1)^t\alpha_t|N(1,t+1)|x_t=0.
\]

To prove the second part of the statement notice that $\ell^P$ and $\ell^Q$ intersect at $[1:1:1:1]$. Thus it is enough to prove that $\ell^P$ and $\ell^Q$ are distinct. To this aim, observe that the point $S_P=[\frac{\beta_0}{\alpha_0}:\frac{\beta_1}{\alpha_1}:\frac{\beta_2}{\alpha_2}:\frac{\beta_3}{\alpha_3}]=P_1^\AAA-P_0^\AAA$ lies in $\ell^P$ and the point  $S_Q=[\frac{\alpha_0}{\beta_0}:\frac{\alpha_1}{\beta_1}:\frac{\alpha_2}{\beta_2}:\frac{\alpha_3}{\beta_3}]=Q_1^\AAA-Q_0^\AAA$ lies in $\ell^Q$. Suppose that $\ell^P=\ell^Q$. Then the points $[1:1:1:1]$, $S_P$ and $S_Q$ would be collinear, that is the matrix
\[
\begin{pmatrix}
1 & 1 & 1 & 1\\
\frac{\beta_0}{\alpha_0} &\frac{\beta_1}{\alpha_1} &\frac{\beta_2}{\alpha_2} &\frac{\beta_3}{\alpha_3}\\
\frac{\alpha_0}{\beta_0} &\frac{\alpha_1}{\beta_1} &\frac{\alpha_2}{\beta_2} &\frac{\alpha_3}{\beta_3}
\end{pmatrix}
\]
would have  rank 2.
Applying the operations  $R_2-\frac{\beta_0}{\alpha_0}R_1\to R_2$ and $R_3-\frac{\alpha_0}{\beta_0}R_1\to R_3$ (and then $C_i-C_1$) we get the matrix
\[
\begin{pmatrix}
1 & 0 & 0 & 0\\
0 &\frac{\alpha_0\beta_1-\alpha_1\beta_0}{\alpha_0\alpha_1} &\frac{\alpha_0\beta_2-\alpha_2\beta_0}{\alpha_0\alpha_2} &\frac{\alpha_0\beta_3-\alpha_3\beta_0}{\alpha_0\alpha_3}\\
0 &-\frac{\alpha_0\beta_1-\alpha_1\beta_0}{\beta_0\beta_1} &-\frac{\alpha_0\beta_2-\alpha_2\beta_0}{\beta_0\beta_2} &-\frac{\alpha_0\beta_3-\alpha_3\beta_0}{\beta_0\beta_3}
\end{pmatrix}.
\]
Then we apply the operation $R_3+\frac{\alpha_0\alpha_1}{\beta_0\beta_1}R_2\to R_3$ obtaining
\[
\begin{pmatrix}
1 & 0 & 0 & 0\\
0 &\frac{\alpha_0\beta_1-\alpha_1\beta_0}{\alpha_0\alpha_1} &\frac{\alpha_0\beta_2-\alpha_2\beta_0}{\alpha_0\alpha_2} &\frac{\alpha_0\beta_3-\alpha_3\beta_0}{\alpha_0\alpha_3}\\
0 &0 &\frac{(\alpha_0\beta_2-\alpha_2\beta_0)(\alpha_1\beta_2-\alpha_2\beta_1)}{\alpha_2\beta_0\beta_1\beta_2} &\frac{(\alpha_0\beta_3-\alpha_3\beta_0)(\alpha_1\beta_3-\alpha_3\beta_1)}{\alpha_3\beta_0\beta_1\beta_3}
\end{pmatrix}.
\]
Since  $\frac{\alpha_0}{\beta_0}\not=\frac{\alpha_1}{\beta_1}$, by hypothesis, we can simplify the second row obtaining
\[
\begin{pmatrix}
1 & 0 & 0 & 0\\
0 &\frac{\alpha_0\beta_1-\alpha_1\beta_0}{\alpha_0\alpha_1} &0&0\\
0 &0 &\frac{(\alpha_0\beta_2-\alpha_2\beta_0)(\alpha_1\beta_2-\alpha_2\beta_1)}{\alpha_2\beta_0\beta_1\beta_2} &\frac{(\alpha_0\beta_3-\alpha_3\beta_0)(\alpha_1\beta_3-\alpha_3\beta_1)}{\alpha_3\beta_0\beta_1\beta_3}
\end{pmatrix}.
\]
Thus the matrix would have rank 2 if and only if

\[
(\alpha_0\beta_2-\alpha_2\beta_0)(\alpha_1\beta_2-\alpha_2\beta_1)=0 \mbox{ and } (\alpha_0\beta_3-\alpha_3\beta_0)(\alpha_1\beta_3-\alpha_3\beta_1)=0.
\]
Such equalities  are verified in the following cases
\begin{itemize}
\item $\frac{\alpha_0}{\beta_0}=\frac{\alpha_2}{\beta_2}=\frac{\alpha_3}{\beta_3}$, 
\item $\frac{\alpha_1}{\beta_1}=\frac{\alpha_2}{\beta_2}=\frac{\alpha_3}{\beta_3}$,
\item $\frac{\alpha_0}{\beta_0}=\frac{\alpha_3}{\beta_3}$  and $\frac{\alpha_1}{\beta_1}=\frac{\alpha_2}{\beta_2}$,
\item $\frac{\alpha_0}{\beta_0}=\frac{\alpha_2}{\beta_2}$ and $\frac{\alpha_1}{\beta_1}=\frac{\alpha_3}{\beta_3}$,
\end{itemize}
which give contradictions since the points $A_i$ are distinct.
\end{proof}

By the previous proposition, we immediately get the following

\begin{Cor}\label{distinctpoints}
One has $P_i^\AAA\not= Q_j^\AAA$, for every $i,j\geq 1$.
\end{Cor}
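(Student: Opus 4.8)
The plan is to read this off directly from Proposition~\ref{lines}. Recall from the discussion just before that proposition that $P_i^\AAA\in\ell^P$ for every $i$ and $Q_j^\AAA\in\ell^Q$ for every $j$, and that $\ell^P$ and $\ell^Q$ both pass through $[1:1:1:1]=P_0^\AAA=Q_0^\AAA$. Proposition~\ref{lines} moreover asserts that $\ell^P$ and $\ell^Q$ are \emph{distinct} (and coplanar). The key point is that two distinct lines sharing a point meet in exactly that point, so $\ell^P\cap\ell^Q=\{[1:1:1:1]\}$.

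Concretely, I would argue by contradiction: suppose $P_i^\AAA=Q_j^\AAA$ for some $i,j\geq 1$. Then this common point lies in $\ell^P\cap\ell^Q$, hence equals $[1:1:1:1]$; in particular $P_i^\AAA=[1:1:1:1]$. But for $i\geq 1$ this is impossible: the homogeneous coordinates of $P_i^\AAA$ are $\tfrac{\alpha_t+i\beta_t}{\alpha_t}=1+i\tfrac{\beta_t}{\alpha_t}$ for $t=0,\dots,3$, and since $i\neq 0$ and the ratios $\tfrac{\beta_t}{\alpha_t}$ are pairwise distinct (Remark~\ref{rmkLD}, i.e.\ the $A_t$ are distinct points of $\PP^1\setminus\Delta_0$), these four numbers are pairwise distinct and in particular not all equal, so $P_i^\AAA\neq[1:1:1:1]$. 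This contradiction proves the claim. (Equivalently, one may note that $P_i^\AAA=(1-i)P_0^\AAA+iP_1^\AAA$ with $P_0^\AAA,P_1^\AAA$ linearly independent, so $P_i^\AAA\neq P_0^\AAA$ whenever $i\geq 1$.)

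I do not expect a genuine obstacle here: the corollary is an immediate consequence of Proposition~\ref{lines} once one invokes that two distinct lines through a common point meet only at that point. The only ingredient needing a word of justification is the identification of that intersection point as $[1:1:1:1]$, and this is already contained in the proof of Proposition~\ref{lines}, where it is shown that both $\ell^P$ and $\ell^Q$ pass through $[1:1:1:1]$ and that the two lines are distinct.
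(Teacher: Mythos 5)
Your proof is correct and follows essentially the same route as the paper: the paper also argues by contradiction that $P_i^\AAA=Q_j^\AAA$ would force $\ell^P$ and $\ell^Q$ to share the two points $[1:1:1:1]$ and $P_i^\AAA$, hence coincide, contradicting Proposition~\ref{lines}. The only difference is that you explicitly verify $P_i^\AAA\neq[1:1:1:1]$ for $i\geq 1$, a detail the paper leaves implicit; this is a welcome clarification rather than a divergence.
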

 
 \begin{proof}
 Suppose that, for some $i,j\geq 1$ one has $P_i^\AAA= Q_j^\AAA$. Then $\ell^P$ and $\ell^Q$ would intersect in the points  $[1,1,1,1]$ and $P_i^\AAA(= Q_j^\AAA)$ giving $\ell^P=\ell^Q$, which is a contradiction, by Proposition \ref{lines}.
 \end{proof}
  
\begin{Ex}\label{conex}\rm
Consider  Example \ref{mainex}. In this case one has
\[
M=\begin{pmatrix}
1 & 2 & 3 & 4\cr
1 & 1 & 1 & 1\cr
1 & 4 & 9 & 16
\end{pmatrix}
\qquad
N=\begin{pmatrix}
1 & 1 & 1 & 1\cr
1 & 2 & 3 & 4\cr
\end{pmatrix}
\]
from which we get
\[
\begin{array}{llll}
|M(1)|:=-2 & |M(2)|:=-6 &|M(3)|:=-6 & |M(4)|:=-2\\ 
\end{array}
\]
and
\[
\begin{array}{llll}
|N(1,2)|:=1 & |N(1,3)|:=2 &|N(1,4)|:=1 
\end{array}.
\]
Hence the line $\ell^P$ through the points $P_k^\AAA$ is defined, up to rescaling, by the equations
\[
\ell^P:
\begin{cases}
2x_0-12x_1+18x_2-8x_3=0\\
-x_1+2x_2-x_3=0
\end{cases}
\]
and the line $\ell^Q$ through the points $Q_k^\AAA$ is defined, up to rescaling, by the equations
\[
\ell^Q:
\begin{cases}
2x_0-12x_1+18x_2-8x_3=0\\
-2x_1+6x_2-4x_3=0
\end{cases}.
\]

\end{Ex}

 \vskip0.3cm
 We add, now, another condition on the points in $\AAA$.
Let $W_{i}$ be the point  $[1:-i]$, then we define the set of points $\mathcal{W}$ as
 \[
\mathcal{W}=\bigcup_{i\in \NN^*} \left(W_i\cup W_\frac{1}{i}\right)
  \]
  where $\NN^*=\NN\setminus \{ 0\}$.
  
\begin{Rmk}\label{remarkzeri}\rm
 It is easy to verify that if $A_i\notin \mathcal{W}$, for $i=0,\dots, 3$, then $P_i^\AAA\notin \Delta_2$, for any $i$, and $Q_j^\AAA\notin \Delta_2$, for any $j$, that is such points do not have any zero coordinate. This fact will be fundamental in the successive parts of the paper in order to apply Lemmas \ref{LempL}, \ref{LempqL} and \ref{Lempqr}. Clearly, $A_i\notin \mathcal{W}$ if its coordinates are both strictly positive. 
\end{Rmk}

Denote by ${\mathcal{I}(n})=\{i_0, i_1, \dots, i_{n-1}\}$ a set of nonnegative integers with $0=i_0<i_1<\cdots <i_{n-1}$.
Given positive integers $a$ and $b$, we define the set of points $Z^\AAA_{a,b}$ by the pair-wise Hadamard product of points  $P_i^\AAA$ and $Q_j^\AAA$ as
\[
 Z^\AAA_{a,b}=\{P^\AAA_i\hada Q^\AAA_j\, : \, i\in {\mathcal{I}(a}),  \, j\in {\mathcal{I}(b})\}.
 \]
We can represent these sets in matrix form as:
 \[
 \begin{pmatrix}
P^\AAA_{i_0} \hada Q^\AAA_{i_0} & P^\AAA_{i_0} \hada Q^\AAA_{i_1} & \cdots & P^\AAA_{i_0}\hada Q^\AAA_{i_{b-1}}\cr
P^\AAA_{i_1} \hada Q^\AAA_{i_0} & P^\AAA_{i_1} \hada Q^\AAA_{i_1} & \cdots & P^\AAA_{i_1}\hada Q^\AAA_{i_{b-1}}\cr
\vdots & \vdots & \ddots & \vdots\cr
P^\AAA_{i_{a-1}} \hada Q^\AAA_{i_0} & P^\AAA_{i_{a-1}} \hada Q^\AAA_{i_1} & \cdots & P^\AAA_{i_{a-1}}\hada Q^\AAA_{i_{b-1}}\
\end{pmatrix}.
\]
Observe that, by the conditions on ${\mathcal{I}(a})$ and ${\mathcal{I}(b})$ one has 
\[
P^\AAA_{i_0}=P^\AAA_{0}=[1:1:1:1]=Q^\AAA_{0}=Q^\AAA_{i_0}.
\]

\begin{Thm}\label{IC}
 If $A_i\notin \mathcal{W}$, for $i=0,\dots, 3$, then, for any positive integers $a$ and $b$, $Z_{a,b}^\AAA$ is a planar complete intersection of $ab$ points.
\end{Thm}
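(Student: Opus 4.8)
The plan is to show that the $ab$ points of $Z_{a,b}^\AAA$ all lie on a common plane $\Pi$ and that, inside this plane, they form the complete intersection of two curves of degrees $a$ and $b$ respectively. First I would establish coplanarity. Recall that $P_i^\AAA=(1-i)P_0^\AAA+iP_1^\AAA$ lies on the line $\ell^P$ and $Q_j^\AAA=(1-j)Q_0^\AAA+jQ_1^\AAA$ lies on the line $\ell^Q$, and that by Proposition \ref{lines} the lines $\ell^P$ and $\ell^Q$ are distinct coplanar lines meeting at $[1:1:1:1]$; call $\Pi$ the plane they span. Since the Hadamard transformation behaves well with respect to the linear structure once one works in affine coordinates around $[1:1:1:1]$, the key computational step is to expand $P_i^\AAA\hada Q_j^\AAA$ in the coordinates $t_m=\beta_m/\alpha_m$: one gets, coordinate-wise, $1+(i+j)\,(\beta_m/\alpha_m)+ij\,(\beta_m/\alpha_m)^2$ (up to the common rescaling hidden in the chosen representatives), so that $P_i^\AAA\hada Q_j^\AAA$ is a fixed affine combination of the three points $[1:1:1:1]$, $S_P=[\beta_0/\alpha_0:\cdots:\beta_3/\alpha_3]$ and $S_M=[(\beta_0/\alpha_0)^2:\cdots:(\beta_3/\alpha_3)^2]$. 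Hence \emph{every} point of $Z_{a,b}^\AAA$ lies on the plane spanned by these three points; one checks (via the matrix $M$) that this plane is exactly $\Pi$, the plane containing $\ell^P$ and $\ell^Q$, consistently with the first equations of $\ell^P$ and $\ell^Q$ in Proposition \ref{lines} being identical.

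Next I would exhibit the two curves cutting out $Z_{a,b}^\AAA$ in $\Pi$. Inside $\Pi$, use the parametrization $\phi(s)=[1+s(\beta_0/\alpha_0):\cdots:1+s(\beta_3/\alpha_3)]$ of $\ell^P$ and note that $P_i^\AAA\hada Q_j^\AAA$, viewed in $\Pi$, is the image under the Hadamard product map of $(\phi(i),\psi(j))$ where $\psi$ parametrizes $\ell^Q$ similarly. Concretely I would produce a polynomial $F_a$ of degree $a$, a product of $a$ distinct linear forms on $\Pi$, vanishing exactly on the "rows" (the $a$ values $i\in\II(a)$), and a polynomial $G_b$ of degree $b$, a product of $b$ distinct linear forms, vanishing exactly on the "columns" ($j\in\II(b)$). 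The natural candidates come from Theorem \ref{genset}: the $a$ collinear points $P_{i_0}^\AAA,\dots,P_{i_{a-1}}^\AAA$ are cut out on $\ell^P$ by a degree-$a$ form, and Hadamard-transforming by a suitable point turns the product decomposition into one on $\Pi$; symmetrically for the $Q$'s. One then argues that $V(F_a)\cap V(G_b)\cap\Pi$ is zero-dimensional of degree $ab$ — this uses that no two of the linear factors coincide and no three of the $2\,\min(a,b)$-plus lines are concurrent off the points of $Z_{a,b}^\AAA$, which is where Corollary \ref{distinctpoints} (ensuring $P_i^\AAA\ne Q_j^\AAA$) and Lemma \ref{Lempqr} enter. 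Bézout in $\Pi\cong\PP^2$ then gives exactly $ab$ intersection points counted with multiplicity, and transversality at each shows these are reduced and are precisely the $P_i^\AAA\hada Q_j^\AAA$.

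The main obstacle I anticipate is precisely the transversality/reducedness bookkeeping: one must verify that the $ab$ listed points are genuinely distinct (so that $|Z_{a,b}^\AAA|=ab$ as claimed, not fewer) and that the two plane curves meet transversally there. Distinctness of the points should follow from Lemma \ref{Lempqr} and Lemma \ref{LempqL}: if $P_i^\AAA\hada Q_j^\AAA=P_{i'}^\AAA\hada Q_{j'}^\AAA$ then, since none of these points lies in $\Delta_2$ by Remark \ref{remarkzeri}, one can cancel to force $(i,j)=(i',j')$, using that the $P$'s are pairwise distinct and likewise the $Q$'s (which in turn follows from $A_i$ distinct and $A_i\notin\mathcal W$). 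The transversality is then automatic once the curves are reduced and the points are simple, because at a point where one factor of $F_a$ and one factor of $G_b$ vanish and no others do, the two tangent lines are these two distinct lines. The hypothesis $A_i\notin\mathcal W$ is used throughout to keep all coordinates nonzero so that Hadamard products are defined and Lemmas \ref{LempL}, \ref{LempqL}, \ref{Lempqr} apply; the hypothesis that the $A_i$ are distinct is what makes $\ell^P\ne\ell^Q$ and prevents the degeneracies enumerated at the end of the proof of Proposition \ref{lines}.
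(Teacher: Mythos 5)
Your overall architecture — two pencils of lines $P_i^\AAA\hada \ell^Q$ and $Q_j^\AAA\hada \ell^P$ whose pairwise intersections form a grid, cut out in a plane by products of linear forms — is essentially the paper's argument, and your planarity discussion is if anything more explicit than the paper's. But there is one genuine gap and one computational error worth flagging.

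The gap is the distinctness of the $ab$ points in the case where \emph{both} indices differ. You write that if $P_i^\AAA\hada Q_j^\AAA=P_{i'}^\AAA\hada Q_{j'}^\AAA$ ``one can cancel to force $(i,j)=(i',j')$'' using Lemma \ref{Lempqr}; but that lemma only cancels a \emph{common} Hadamard factor, so it settles the cases $i=i'$ or $j=j'$ and nothing more. When $i\neq i'$ and $j\neq j'$ there is no shared factor to cancel, and neither Corollary \ref{distinctpoints} nor Lemma \ref{LempqL} rules out such a coincidence: the rows $P_i^\AAA\hada\ell^Q$ and $P_{i'}^\AAA\hada\ell^Q$ are distinct coplanar lines, so they \emph{do} meet in a point, and nothing general prevents that point from being simultaneously a grid point of row $i$ and of row $i'$. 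This is exactly the hard part of the theorem: the paper spends the bulk of its proof writing the condition $P_i^\AAA\hada Q_j^\AAA=P_k^\AAA\hada Q_l^\AAA$ coordinate-wise, reducing to a quadratic in $\alpha_s$, computing its discriminant, and checking that every one of the resulting eight solution patterns forces $\frac{\alpha_{\rho_1}}{\beta_{\rho_1}}=\frac{\alpha_{\rho_2}}{\beta_{\rho_2}}$ for two pairs of indices, contradicting the distinctness of the $A_i$. Without some such ad hoc computation your B\'ezout count only gives $ab$ points \emph{with multiplicity}, and the cardinality claim (and everything downstream, e.g.\ the stick-figure property) is unproved.

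Separately, your coordinate expansion of $P_i^\AAA\hada Q_j^\AAA$ is incorrect. With $t_m=\beta_m/\alpha_m$ the $m$-th coordinate is $(1+i t_m)\bigl(1+j t_m^{-1}\bigr)=(1+ij)+i\,t_m+j\,t_m^{-1}$, not $1+(i+j)t_m+ij\,t_m^2$. The correct expansion exhibits every grid point as a combination (with coefficients independent of $m$) of $[1:1:1:1]$, $S_P=[t_0:\cdots:t_3]$ and $S_Q=[t_0^{-1}:\cdots:t_3^{-1}]$ — precisely the three points used in the proof of Proposition \ref{lines} — so coplanarity with $\langle \ell^P,\ell^Q\rangle$ is immediate. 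By contrast, the plane through $[1:1:1:1]$, $S_P$ and $[t_0^2:\cdots:t_3^2]$ is generically \emph{not} that plane (four distinct values $t_m$ cannot satisfy a single cubic relation), so the verification you defer to ``the matrix $M$'' would fail as stated. This slip is easily repaired, but as written the planarity step does not go through.
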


\begin{proof}
Consider $i,k\in {\mathcal{I}(a})$ and $j,l \in {\mathcal{I}(b})$. We prove first that  $P_i^\AAA\hada Q_j^\AAA= P_k^\AAA\hada Q_l^\AAA$ if and only if $i=k$ and $j=l$, implying  that $Z_{a,b}^\AAA$ is a set of cardinality $ab$.

Suppose that $P_i^\AAA\hada Q_j^\AAA= P_k^\AAA\hada Q_l^\AAA$ and  distinguish two cases.
First, we consider the case  in which two indices are equal. Suppose, for example, that $i=k$ and $j\not=l$, i.e. $P_i^\AAA\hada Q_j^\AAA= P_i^\AAA\hada Q_l^\AAA$. Since, by Remark \ref{remarkzeri}, $P_i^\AAA\notin \Delta_2$, one has, by Lemma \ref{Lempqr}, that $Q_j^\AAA= Q_l^\AAA$, which is a contradiction since $j\not=l$. The same approach works if $i\not=k$ and $j=l$.
Let us consider the case $i\not=k$ and $j\not=l$. Looking at the coordinates, the condition $P_i^\AAA\hada Q_j^\AAA= P_k^\AAA\hada Q_l^\AAA$, is
{\small{
\[
\begin{array}{lll}
&\left[ \frac{(\alpha_0+i\beta_0)(j\alpha_0+\beta_0)}{\alpha_0\beta_0}: \frac{(\alpha_1+i\beta_1)(j\alpha_1+\beta_1)}{\alpha_1\beta_1}:\frac{(\alpha_2+i\beta_2)(j\alpha_2+\beta_2)}{\alpha_2\beta_2}: \frac{(\alpha_3+i\beta_3)(j\alpha_3+\beta_3)}{\alpha_3\beta_3}\right]&=\\
\\
=&\left[ \frac{(\alpha_0+k\beta_0)(l\alpha_0+\beta_0)}{\alpha_0\beta_0}: \frac{(\alpha_1+k\beta_1)(l\alpha_1+\beta_1)}{\alpha_1\beta_1}:\frac{(\alpha_2+k\beta_2)(l\alpha_2+\beta_2)}{\alpha_2\beta_2}: \frac{(\alpha_3+k\beta_3)(l\alpha_3+\beta_3)}{\alpha_3\beta_3}\right]
\end{array}
\]}}
or equivalently
\[
 \frac{(\alpha_s+i\beta_s)(j\alpha_s+\beta_s)}{(\alpha_s+k\beta_s)(l\alpha_s+\beta_s)}=\lambda \mbox{ for } s=0,\dots, 3
\]
for some $\lambda\not=0$. This implies
\[
 \frac{(\alpha_0+i\beta_0)(j\alpha_0+\beta_0)}{(\alpha_0+k\beta_0)(l\alpha_0+\beta_0)}= \frac{(\alpha_s+i\beta_s)(j\alpha_s+\beta_s)}{(\alpha_s+k\beta_s)(l\alpha_s+\beta_s)} \mbox{ for } s=1,\dots, 3.
\]
Hence $[\alpha_0:\beta_0]$, $[\alpha_1:\beta_1]$, $[\alpha_2:\beta_2]$ and $[\alpha_3:\beta_3]$ must satisfy

{\small{\begin{equation}\label{eqbr}
(\alpha_0+i\beta_0)(j\alpha_0+\beta_0)(\alpha_s+k\beta_s)(l\alpha_s+\beta_s)-(\alpha_s+i\beta_s)(j\alpha_s+\beta_s)(\alpha_0+k\beta_0)(l\alpha_0+\beta_0)=0
\end{equation}}}

\noindent for $s=1,\dots, 3$. If we rewrite (\ref{eqbr}) as an equation in $\alpha_s$, we get $\tau_2\alpha_s^2+\tau_1\alpha_s+\tau_0=0$ where
\begin{equation*}
\begin{array}{rcl}
\tau_2&=&(\alpha_0+i\beta_0)(j\alpha_0+\beta_0)l-(\alpha_0+k\beta_0)(l\alpha_0+\beta_0)j,\\
\\
\tau_1&=&[(\alpha_0+i\beta_0)(j\alpha_0+\beta_0)(kl+1)-(\alpha_0+k\beta_0)(l\alpha_0+\beta_0)(ij+1)]\beta_s,\\
\\
\tau_0&=&[(\alpha_0+i\beta_0)(j\alpha_0+\beta_0)k-(\alpha_0+k\beta_0)(l\alpha_0+\beta_0)i]\beta_s^2.
\end{array}
\end{equation*}

The discriminant of $\tau_2\alpha_s^2+\tau_1\alpha_s+\tau_0$ turns to be equal to

\[
\beta_s^2(j\alpha_0^2-l\alpha_0^2-ijl\alpha_0^2+jkl\alpha_0^2+2jk\alpha_0\beta_0-2il\alpha_0\beta_0-i\beta_0^2+k\beta_0^2+ijk\beta_0^2-ikl\beta_0^2)^2
\]

\noindent which gives, after some tedious computation, the solutions of $\alpha_s$ as
\begin{equation*}
\alpha_s=\frac{\alpha_0\beta_s}{\beta_0} \mbox { or } \alpha_s =\rho \beta_s, \mbox{ for } s=1,\dots, 3,
\end{equation*}
where 
\[
\rho=\frac{(jk-il)\alpha_0+(ijk-i-ikl+k)\beta_0}{(ijl-j+l-jkl)\alpha_0+(il-jk)\beta_0}.
\]
Computing the solutions of (\ref{eqbr}) for $s=1,\dots, 3$, we obtain that $P_i^\AAA\hada Q_j^\AAA= P_k^\AAA\hada Q_l^\AAA$ if one of the following cases is verified
\begin{itemize}
\item[i)] $\alpha_1=\frac{\alpha_0\beta_1}{\beta_0}$, $\alpha_2=\frac{\alpha_0\beta_2}{\beta_0}$, $\alpha_3=\frac{\alpha_0\beta_3}{\beta_0}$;
\item[ii)] $\alpha_1=\frac{\alpha_0\beta_1}{\beta_0}$, $\alpha_2=\frac{\alpha_0\beta_2}{\beta_0}$, $\alpha_3=\rho \beta_3$;
\item[iii)] $\alpha_1=\frac{\alpha_0\beta_1}{\beta_0}$, $\alpha_2=\rho \beta_2$, $\alpha_3=\frac{\alpha_0\beta_3}{\beta_0}$;
\item[vi)] $\alpha_1=\frac{\alpha_0\beta_1}{\beta_0}$, $\alpha_2=\rho \beta_2$, $\alpha_3=\rho \beta_3$;
\item[v)] $\alpha_1=\rho \beta_1$, $\alpha_2=\frac{\alpha_0\beta_2}{\beta_0}$, $\alpha_3=\frac{\alpha_0\beta_3}{\beta_0}$;
\item[vi)] $\alpha_1=\rho \beta_1$, $\alpha_2=\frac{\alpha_0\beta_2}{\beta_0}$, $\alpha_3=\rho \beta_3$;
\item[vii)] $\alpha_1=\rho \beta_1$, $\alpha_2=\rho \beta_2$, $\alpha_3=\frac{\alpha_0\beta_3}{\beta_0}$;
\item[viii)] $\alpha_1=\rho \beta_1$, $\alpha_2=\rho \beta_2$, $\alpha_3=\rho \beta_3$.
\end{itemize}
However, all cases implies that there are at least  two pairs of indices $(\rho_1,\rho_2)$ and $(\rho_3,\rho_4)$ with $\frac{\alpha_{\rho_1}}{\beta_{\rho_1}}=\frac{\alpha_{\rho_2}}{\beta_{\rho_2}}$ and $\frac{\alpha_{\rho_3}}{\beta_{\rho_3}}=\frac{\alpha_{\rho_4}}{\beta_{\rho_4}}$  which is a contradiction since the points $A_i$ must be distinct.  Hence $P_i^\AAA\hada Q_j^\AAA= P_k^\AAA\hada Q_l^\AAA$ if and only if $i=k$ and $j=l$ and then $Z_{a,b}^\AAA$ consists of $ab$ points.

To prove that $Z_{a,b}^\AAA$ is a planar complete intersection, notice first that, since,  $P_i^\AAA\notin \Delta_2$, for $i\in {\mathcal{I}(a})$ and $Q_j^\AAA\notin \Delta_2$ for $ j\in {\mathcal{I}(b})$, we can apply Lemma \ref{LempL} obtaining that
$P_i^\AAA\hada \ell^Q$ is a line for $i\in {\mathcal{I}(a})$ and $Q_j^\AAA\hada \ell^P$ is a line for $ j\in {\mathcal{I}(b})$. Moreover, by Corollary \ref{corgenset}, one has
\[
P_i^\AAA\hada Q_j^\AAA \in Q_j^\AAA\hada \ell^P \mbox { for } i\in {\mathcal{I}(a})
\]
\[
P_i^\AAA\hada Q_j^\AAA \in P_i^\AAA\hada \ell^Q \mbox { for }  j\in {\mathcal{I}(b})
\]
\vskip0.1cm
\noindent that is the points $P_{i_0}^\AAA\hada Q_j^\AAA, \dots, P_{i_{a-1}}^\AAA\hada Q_j^\AAA$ lie in the line $Q_j\hada \ell^P$, for $ j\in {\mathcal{I}(b})$ and similarly the points $P_i^\AAA\hada Q_{i_0}^\AAA, \dots, P_i^\AAA\hada Q_{i_{b-1}}^\AAA$ lie in the line $P_i\hada \ell^Q$, for $i\in {\mathcal{I}(a})$.

For any $i$ and $j$ the lines $P_i\hada \ell^Q$ and $Q_j\hada \ell^P$ clearly  intersect in the point $P^\AAA_i\hada Q_j^\AAA$, hence, as $i$ varies in ${\mathcal{I}(a})$ and $j$ varies in ${\mathcal{I}(b})$, the $ab$ intersections $P_i\hada \ell^Q\cap Q_j\hada \ell^P$ give the $ab$ points in $Z_{a,b}^\AAA$.
\end{proof}

In Figure \ref{figuraZ} we can see four different examples of $Z^\AAA_{a,b}$. The example in $(i)$ is  for $a=b=2$ with ${\mathcal{I}(a})={\mathcal{I}(b})=\{0,1\}$ and the white points are represented to show the behaviour of the families of points $P^\AAA$ and $Q^\AAA$. The example in $(ii)$ is for $a=4$ and $b=5$ with ${\mathcal{I}(a})=\{0,1,2,3\}$ and ${\mathcal{I}(b})=\{0,1,2,3,4\}$. The examples in $(iii)$ and $(iv)$ are for  $a=2$ and $b=3$, but, while in $(iii)$ we use ${\mathcal{I}(a})=\{0,1\}$ and ${\mathcal{I}(b})=\{0,1,2\}$, in $(iv)$ we use ${\mathcal{I}(a})=\{0,2\}$ and ${\mathcal{I}(b})=\{0,2,4\}$.

\begin{figure}
\centering
\begin{tabular}{cc}
\begin{tikzpicture}
   [scale=0.5,auto=center]
   
\draw (-1.5,5) -- (9,5);
\draw[dotted] (9,5) -- (10,5);
\draw (0,-1.5) -- (0,6.5);
\draw[dotted] (0,-1.5) -- (0,-2.5);

\draw(0.7,6.3) node[scale=0.5] {$\ell_P\star Q_0^\AAA$};  
\draw(-0.8,5.3) node[scale=0.5] {$\ell_Q\star P_0^\AAA$};  
\draw[black] (-1.5,3) -- (9,3);
\draw[black,dotted] (9,3) -- (10,3);
\draw[black] (2,-1.5) -- (2,6.5);
\draw[black,dotted] (2,-1.5) -- (2,-2.5);
\draw(2.7,6.3) node[scale=0.5] {$\ell_P\star Q_1^\AAA$};  
\draw(-0.8,3.3) node[scale=0.5] {$\ell_Q\star P_1^\AAA$}; 
\draw(0.85,5.3) node[scale=0.5] {$Q_0^\AAA\star P_0^\AAA$};  
\draw(2.85,5.3) node[scale=0.5] {$Q_1 ^\AAA\star P_0^\AAA$};  
\draw(4.5,5.3) node[scale=0.5] {$Q_2^\AAA$};  
\draw(6.5,5.3) node[scale=0.5] {$Q_3^\AAA$};  
\draw(8.5,5.3) node[scale=0.5] {$Q_4^\AAA$};  
\draw(0.85,3.3) node[scale=0.5] {$Q_0^\AAA \star P_1^\AAA$};  
\draw(2.85,3.3) node[scale=0.5] {$Q_1^\AAA \star P_1^\AAA$};  
\draw(0.5,1.3) node[scale=0.5] {$P_2^\AAA$};  
\draw(0.5,-0.7) node[scale=0.5] {$P_3^\AAA$};  

  \node[circle, draw=black!100,fill=black!100,inner sep=2pt] (n1) at (0,5) {};
   \node[circle, draw=black!100,fill=black!100,inner sep=2pt] (n1) at (2,5) {};
   \node[circle, draw=black!100,fill=white!100,inner sep=2pt] (n1) at (4,5) {};
   \node[circle, draw=black!100,fill=white!100,inner sep=2pt] (n1) at (6,5) {};
   \node[circle, draw=black!100,fill=white!100,inner sep=2pt] (n1) at (8,5) {};

  \node[circle, draw=black!100,fill=black!100,inner sep=2pt] (n1) at (0,3) {};
  \node[circle, draw=black!100,fill=black!100,inner sep=2pt] (n1) at (2,3) {};
  \node[circle, draw=black!100,fill=white!100,inner sep=2pt] (n1) at (0,1) {};
  \node[circle, draw=black!100,fill=white!100,inner sep=2pt] (n1) at (0,-1) {};

\end{tikzpicture}
&
\begin{tikzpicture}
   [scale=0.5,auto=center]
   
\draw (-1.5,5) -- (9,5);
\draw[dotted] (9,5) -- (10,5);
\draw (0,-1.5) -- (0,6.5);
\draw[dotted] (0,-1.5) -- (0,-2.5);

\draw(0.7,6.3) node[scale=0.5] {$\ell_P\star Q_0^\AAA$};  
\draw(-0.8,5.3) node[scale=0.5] {$\ell_Q\star P_0^\AAA$};  
\draw[black] (-1.5,3) -- (9,3);
\draw[black,dotted] (9,3) -- (10,3);
\draw[black] (2,-1.5) -- (2,6.5);
\draw[black,dotted] (2,-1.5) -- (2,-2.5);
\draw(2.7,6.3) node[scale=0.5] {$\ell_P\star Q_1^\AAA$};  
\draw(-0.8,3.3) node[scale=0.5] {$\ell_Q\star P_1^\AAA$};  

\draw[black] (4,-1.5) -- (4,6.5);
\draw[black,dotted] (4,-1.5) -- (4,-2.5);
\draw(4.7,6.3) node[scale=0.5] {$\ell_P\star Q_2^\AAA$};

\draw[black] (6,-1.5) -- (6,6.5);
\draw[black,dotted] (6,-1.5) -- (6,-2.5);
\draw(6.7,6.3) node[scale=0.5] {$\ell_P\star Q_3^\AAA$};  

\draw[black] (8,-1.5) -- (8,6.5);
\draw[black,dotted] (8,-1.5) -- (8,-2.5);
\draw(8.7,6.3) node[scale=0.5] {$\ell_P\star Q_4^\AAA$};

\draw[black] (-1.5,1) -- (9,1);
\draw[black,dotted] (9,1) -- (10,1);
\draw(-0.8,1.3) node[scale=0.5] {$\ell_Q\star P_2^\AAA$};  

\draw[black] (-1.5,-1) -- (9,-1);
\draw[black,dotted] (9,-1) -- (10,-1);
\draw(-0.8,-0.7) node[scale=0.5] {$\ell_Q\star P_3^\AAA$}; 

\draw(0.85,5.3) node[scale=0.5] {$Q_0^\AAA\star P_0^\AAA$};  
\draw(2.85,5.3) node[scale=0.5] {$Q_1^\AAA \star P_0^\AAA$};  
\draw(4.85,5.3) node[scale=0.5] {$Q_2^\AAA \star P_0^\AAA$};  
\draw(6.85,5.3) node[scale=0.5] {$Q_3^\AAA \star P_0^\AAA$};  
\draw(8.85,5.3) node[scale=0.5] {$Q_4^\AAA \star P_0^\AAA$};  
\draw(0.85,3.3) node[scale=0.5] {$Q_0^\AAA \star P_1^\AAA$};  
\draw(2.85,3.3) node[scale=0.5] {$Q_1^\AAA \star P_1^\AAA$};  
\draw(4.85,3.3) node[scale=0.5] {$Q_2^\AAA \star P_1^\AAA$}; 
\draw(6.85,3.3) node[scale=0.5] {$Q_3^\AAA \star P_1^\AAA$}; 
\draw(8.85,3.3) node[scale=0.5] {$Q_4^\AAA \star P_1^\AAA$}; 

\draw(0.85,1.3) node[scale=0.5] {$Q_0^\AAA \star P_2^\AAA$};  
\draw(2.85,1.3) node[scale=0.5] {$Q_1^\AAA \star P_2^\AAA$};  
\draw(4.85,1.3) node[scale=0.5] {$Q_2^\AAA \star P_2^\AAA$}; 
\draw(6.85,1.3) node[scale=0.5] {$Q_3^\AAA \star P_2^\AAA$}; 
\draw(8.85,1.3) node[scale=0.5] {$Q_4^\AAA \star P_2^\AAA$}; 

\draw(0.85,-0.7) node[scale=0.5] {$Q_0^\AAA \star P_3^\AAA$};  
\draw(2.85,-0.7) node[scale=0.5] {$Q_1^\AAA \star P_3^\AAA$};  
\draw(4.85,-0.7) node[scale=0.5] {$Q_2^\AAA \star P_3^\AAA$}; 
\draw(6.85,-0.7) node[scale=0.5] {$Q_3^\AAA \star P_3^\AAA$}; 
\draw(8.85,-0.7) node[scale=0.5] {$Q_4^\AAA \star P_3^\AAA$};

  \node[circle, draw=black!100,fill=black!100,inner sep=2pt] (n1) at (0,5) {};
   \node[circle, draw=black!100,fill=black!100,inner sep=2pt] (n1) at (2,5) {};
   \node[circle, draw=black!100,fill=black!100,inner sep=2pt] (n1) at (4,5) {};
   \node[circle, draw=black!100,fill=black!100,inner sep=2pt] (n1) at (6,5) {};
   \node[circle, draw=black!100,fill=black!100,inner sep=2pt] (n1) at (8,5) {};

  \node[circle, draw=black!100,fill=black!100,inner sep=2pt] (n1) at (0,3) {};
   \node[circle, draw=black!100,fill=black!100,inner sep=2pt] (n1) at (2,3) {};
   \node[circle, draw=black!100,fill=black!100,inner sep=2pt] (n1) at (4,3) {};
   \node[circle, draw=black!100,fill=black!100,inner sep=2pt] (n1) at (6,3) {};
   \node[circle, draw=black!100,fill=black!100,inner sep=2pt] (n1) at (8,3) {};

  \node[circle, draw=black!100,fill=black!100,inner sep=2pt] (n1) at (0,1) {};
   \node[circle, draw=black!100,fill=black!100,inner sep=2pt] (n1) at (2,1) {};
   \node[circle, draw=black!100,fill=black!100,inner sep=2pt] (n1) at (4,1) {};
   \node[circle, draw=black!100,fill=black!100,inner sep=2pt] (n1) at (6,1) {};
   \node[circle, draw=black!100,fill=black!100,inner sep=2pt] (n1) at (8,1) {};

  \node[circle, draw=black!100,fill=black!100,inner sep=2pt] (n1) at (0,-1) {};
   \node[circle, draw=black!100,fill=black!100,inner sep=2pt] (n1) at (2,-1) {};
   \node[circle, draw=black!100,fill=black!100,inner sep=2pt] (n1) at (4,-1) {};
   \node[circle, draw=black!100,fill=black!100,inner sep=2pt] (n1) at (6,-1) {};
   \node[circle, draw=black!100,fill=black!100,inner sep=2pt] (n1) at (8,-1) {};

\end{tikzpicture}\\
$(i)$ & $(ii)$\\
\\
\begin{tikzpicture}
   [scale=0.5,auto=center]
   
\draw (-1.5,5) -- (9,5);
\draw[dotted] (9,5) -- (10,5);
\draw (0,-1.5) -- (0,6.5);
\draw[dotted] (0,-1.5) -- (0,-2.5);

\draw(0.7,6.3) node[scale=0.5] {$\ell_P\star Q_0^\AAA$};  
\draw(-0.8,5.3) node[scale=0.5] {$\ell_Q\star P_0^\AAA$};  
\draw[black] (-1.5,3) -- (9,3);
\draw[black,dotted] (9,3) -- (10,3);
\draw[black] (2,-1.5) -- (2,6.5);
\draw[black,dotted] (2,-1.5) -- (2,-2.5);
\draw(2.7,6.3) node[scale=0.5] {$\ell_P\star Q_1^\AAA$};  
\draw(-0.8,3.3) node[scale=0.5] {$\ell_Q\star P_1^\AAA$};  

\draw[black] (4,-1.5) -- (4,6.5);
\draw[black,dotted] (4,-1.5) -- (4,-2.5);
\draw(4.7,6.3) node[scale=0.5] {$\ell_P\star Q_2^\AAA$};

\draw[black] (6,-1.5) -- (6,6.5);
\draw[black,dotted] (6,-1.5) -- (6,-2.5);
\draw(6.7,6.3) node[scale=0.5] {$\ell_P\star Q_3^\AAA$};  

\draw[black] (8,-1.5) -- (8,6.5);
\draw[black,dotted] (8,-1.5) -- (8,-2.5);
\draw(8.7,6.3) node[scale=0.5] {$\ell_P\star Q_4^\AAA$};

\draw[black] (-1.5,1) -- (9,1);
\draw[black,dotted] (9,1) -- (10,1);
\draw(-0.8,1.3) node[scale=0.5] {$\ell_Q\star P_2^\AAA$};  

\draw[black] (-1.5,-1) -- (9,-1);
\draw[black,dotted] (9,-1) -- (10,-1);
\draw(-0.8,-0.7) node[scale=0.5] {$\ell_Q\star P_3^\AAA$}; 

\draw(0.85,5.3) node[scale=0.5] {$Q_0^\AAA\star P_0^\AAA$};  
\draw(2.85,5.3) node[scale=0.5] {$Q_1^\AAA \star P_0^\AAA$};  
\draw(4.85,5.3) node[scale=0.5] {$Q_2^\AAA \star P_0^\AAA$};  

\draw(0.85,3.3) node[scale=0.5] {$Q_0^\AAA \star P_1^\AAA$};  
\draw(2.85,3.3) node[scale=0.5] {$Q_1^\AAA \star P_1^\AAA$};  
\draw(4.85,3.3) node[scale=0.5] {$Q_2^\AAA \star P_1^\AAA$};

  \node[circle, draw=black!100,fill=black!100,inner sep=2pt] (n1) at (0,5) {};
   \node[circle, draw=black!100,fill=black!100,inner sep=2pt] (n1) at (2,5) {};
   \node[circle, draw=black!100,fill=black!100,inner sep=2pt] (n1) at (4,5) {};

  \node[circle, draw=black!100,fill=black!100,inner sep=2pt] (n1) at (0,3) {};
   \node[circle, draw=black!100,fill=black!100,inner sep=2pt] (n1) at (2,3) {};
   \node[circle, draw=black!100,fill=black!100,inner sep=2pt] (n1) at (4,3) {};

\end{tikzpicture} & \begin{tikzpicture}
   [scale=0.5,auto=center]
   
\draw (-1.5,5) -- (9,5);
\draw[dotted] (9,5) -- (10,5);
\draw (0,-1.5) -- (0,6.5);
\draw[dotted] (0,-1.5) -- (0,-2.5);

\draw(0.7,6.3) node[scale=0.5] {$\ell_P\star Q_0^\AAA$};  
\draw(-0.8,5.3) node[scale=0.5] {$\ell_Q\star P_0^\AAA$};  
\draw[black] (-1.5,3) -- (9,3);
\draw[black,dotted] (9,3) -- (10,3);
\draw[black] (2,-1.5) -- (2,6.5);
\draw[black,dotted] (2,-1.5) -- (2,-2.5);
\draw(2.7,6.3) node[scale=0.5] {$\ell_P\star Q_1^\AAA$};  
\draw(-0.8,3.3) node[scale=0.5] {$\ell_Q\star P_1^\AAA$};  

\draw[black] (4,-1.5) -- (4,6.5);
\draw[black,dotted] (4,-1.5) -- (4,-2.5);
\draw(4.7,6.3) node[scale=0.5] {$\ell_P\star Q_2^\AAA$};

\draw[black] (6,-1.5) -- (6,6.5);
\draw[black,dotted] (6,-1.5) -- (6,-2.5);
\draw(6.7,6.3) node[scale=0.5] {$\ell_P\star Q_3^\AAA$};  

\draw[black] (8,-1.5) -- (8,6.5);
\draw[black,dotted] (8,-1.5) -- (8,-2.5);
\draw(8.7,6.3) node[scale=0.5] {$\ell_P\star Q_4^\AAA$};

\draw[black] (-1.5,1) -- (9,1);
\draw[black,dotted] (9,1) -- (10,1);
\draw(-0.8,1.3) node[scale=0.5] {$\ell_Q\star P_2^\AAA$};  

\draw[black] (-1.5,-1) -- (9,-1);
\draw[black,dotted] (9,-1) -- (10,-1);
\draw(-0.8,-0.7) node[scale=0.5] {$\ell_Q\star P_3^\AAA$}; 

\draw(0.85,5.3) node[scale=0.5] {$Q_0^\AAA\star P_0^\AAA$};  

\draw(4.85,5.3) node[scale=0.5] {$Q_2^\AAA \star P_0^\AAA$};  

\draw(8.85,5.3) node[scale=0.5] {$Q_4^\AAA \star P_0^\AAA$};

\draw(0.85,1.3) node[scale=0.5] {$Q_0^\AAA \star P_2^\AAA$};  

\draw(4.85,1.3) node[scale=0.5] {$Q_2^\AAA \star P_2^\AAA$}; 

\draw(8.85,1.3) node[scale=0.5] {$Q_4^\AAA \star P_2^\AAA$};

  \node[circle, draw=black!100,fill=black!100,inner sep=2pt] (n1) at (0,5) {};

   \node[circle, draw=black!100,fill=black!100,inner sep=2pt] (n1) at (4,5) {};

   \node[circle, draw=black!100,fill=black!100,inner sep=2pt] (n1) at (8,5) {};

  \node[circle, draw=black!100,fill=black!100,inner sep=2pt] (n1) at (0,1) {};

   \node[circle, draw=black!100,fill=black!100,inner sep=2pt] (n1) at (4,1) {};

   \node[circle, draw=black!100,fill=black!100,inner sep=2pt] (n1) at (8,1) {};

\end{tikzpicture}\\
$(iii)$ & $(iv)$\\
\end{tabular}
\caption{Four examples of  $Z^\AAA_{a,b}$ for different choices of ${\mathcal{I}}(a)$ and ${\mathcal{I}}(b)$.}\label{figuraZ}
\end{figure}
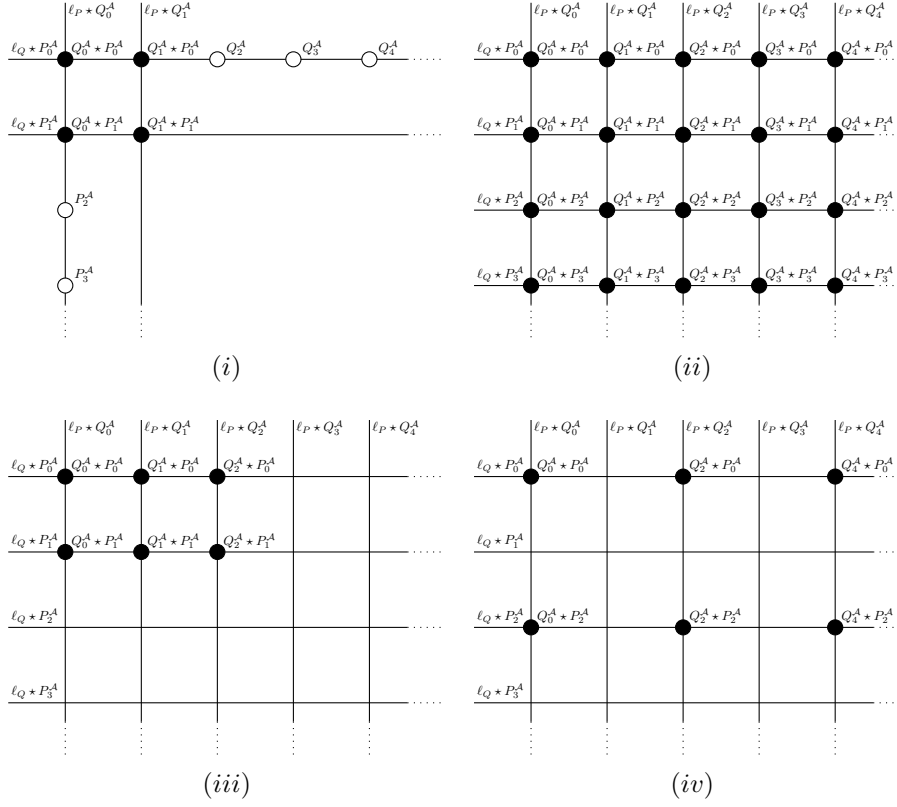

\begin{Rmk}\label{BCFLrmk}\rm
In \cite{BCFL1}, the authors prove that the Hadamard product of two generic lines is a quadric surface. This leads to the question if the Hadamard product of two coplanar lines is a plane. The following example shows that even when the lines $\ell$ and $\ell'$ are coplanar, $\ell \star \ell'$  might  still  be  a quadric. Consider the points
\[
S_1=[1,1,1,1], \,S_2=[3,\frac32, 5,\frac72], \,S_3=[\frac32, 3, \frac43, \frac75]
\]
and the coplanar lines
\[\ell=\overline{S_1S_2} \quad \ell'=\overline{S_1S_3}.\]
One has
\[
\ell:\begin{cases}
3x_1+4x_2-7x_3=0\\
7x_0-4x_1-3x_2=0
\end{cases}
\]
\[
\ell':\begin{cases}
x_1+24x_2-25x_3=0\\
10x_0-x_1-9x_2=0
\end{cases}.
\]

Using the \texttt{Singular} procedure \texttt{HPr}, described in Section 5 of \cite{BCFL1}, we can easily see that $\ell\star \ell'$ is a quadric: 

\begin{verbatim}
> ring R=0,(x(0..3)),dp;
> ideal J1=3*x(1)+4*x(2)-7*x(3),7*x(0)-4*x(1)-3*x(2);
> ideal J2=x(1)+24*x(2)-25*x(3), 10*x(0)-x(1)-9*x(2);
> ideal K=HPr(J1,J2,3);
> K;
K[1]=1120*x(0)^2-68*x(0)*x(1)+x(1)^2+1056*x(0)*x(2)-
-30*x(1)*x(2)+216*x(2)^2-3500*x(0)*x(3)+110*x(1)*x(3)-
-1530*x(2)*x(3)+2625*x(3)^2
\end{verbatim}

On the other hand, our construction shows that there  are cases in which $\ell \star \ell'$ is a plane. As an example consider the points
\[
A_0=[1,2],\, A_1=[2,1],\, A_2=[1,3],\, A_3=[2,5]
\]  
giving
\[
P_0^\AAA=Q_0^\AAA=[1,1,1,1], \,P_1^\AAA=[3,\frac32, 4,\frac72], \,Q_1^\AAA=[\frac32, 3, \frac43, \frac75].
\]
For the lines $\ell^P=\overline{P_0^\AAA P_1^\AAA}$ and $\ell^Q=\overline{Q_0^\AAA Q_1^\AAA}$, we know, by  Theorem \ref{IC} that 
$\ell \star \ell'$ is a plane.
If we write down the equations of the two lines
\[
\ell^P:\begin{cases}
x_1+4x_2-5x_3=0\\
5x_0-2x_1-3x_2=0
\end{cases}
\]
\[
\ell^Q:\begin{cases}
x_1+24x_2-25x_3=0\\
10x_0-x_1-9x_2=0
\end{cases}.
\]
we can do a direct check in \texttt{Singular}:
\begin{verbatim}
> ring R=0,(x(0..3)),dp;
> ideal I1=x(1)+4*x(2)-5*x(3),5*x(0)-2*x(1)-3*x(2);
> ideal I2=x(1)+24*x(2)-25*x(3), 10*x(0)-x(1)-9*x(2);
> ideal K=HPr(I1,I2,3);
> K;
K[1]=40*x(0)-x(1)+36*x(2)-75*x(3)
\end{verbatim}
Hence we have two examples of coplanar lines with different behaviour of their Hadamard product. In these examples the lines are  are generated by respectively the following points
\[
\begin{array}{ccc}
S_1=[1,1,1,1], & S_2=[3,\frac32, 5,\frac72], & S_3=[\frac32, 3, \frac43, \frac75]\\
\\
P_0^\AAA=[1,1,1,1], &P_1^\AAA=[3,\frac32, 4,\frac72], & Q_1^\AAA=[\frac32, 3, \frac43, \frac75].
\end{array}\]
Notice that $S_1=P_0^\AAA$, and $S_3=Q_1^\AAA$ while $S_2$ and $P_1^\AAA$ differ only by an entry.

\end{Rmk}

\begin{Cor}
Let $Z_{a,b}^\AAA$ as in Theorem \ref{IC}, and let
\[
\begin{array}{l}
h=\sum_{t=0}^3 (-1)^{t+1} \alpha_t\beta_t|M(t+1)|x_t=0\\
\\
f=\sum_{
t=1}^3(-1)^t\alpha_t|N(1,t+1)|x_t=0\\
\\
g=\sum_{
t=1}^3(-1)^t\beta_t|N(1,t+1)|x_t=0.
\end{array}
\]
Then the ideal of $Z_{a,b}^\AAA$ is generated by $h, f^{\hada Q_{i_0}^\AAA}\cdots f^{\hada Q_{i_{b-1}}^\AAA},g^{\hada P_{i_0}^\AAA}\cdots g^{\hada P_{i_{a-1}}^\AAA}$.
\end{Cor}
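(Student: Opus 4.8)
The plan is to recognise the three listed forms as, respectively, the plane $\Pi$ that contains $Z_{a,b}^\AAA$ and the two plane curves, of degrees $b$ and $a$, cutting $Z_{a,b}^\AAA$ out inside $\Pi$. Set $F=f^{\hada Q_{i_0}^\AAA}\cdots f^{\hada Q_{i_{b-1}}^\AAA}$ and $G=g^{\hada P_{i_0}^\AAA}\cdots g^{\hada P_{i_{a-1}}^\AAA}$, so $\deg h=1$, $\deg F=b$, $\deg G=a$. First I would verify that $h,F,G$ all vanish on $Z_{a,b}^\AAA$. For $h$ this is a direct computation: for any $i,j\in\NN$, evaluating at $P_i^\AAA\hada Q_j^\AAA$ gives $\sum_{t=0}^3(-1)^{t+1}(\alpha_t+i\beta_t)(j\alpha_t+\beta_t)\,|M(t+1)|$, and since $(\alpha_t+i\beta_t)(j\alpha_t+\beta_t)=j\alpha_t^2+(1+ij)\alpha_t\beta_t+i\beta_t^2$ this is a combination of three sums, each of which is (up to sign) a $4\times4$ determinant with a repeated row — one of the three rows of $M$ gets duplicated — hence zero. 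Thus $h$ vanishes on $Z_{a,b}^\AAA$, and, each line $Q_j^\AAA\hada\ell^P$ (resp.\ $P_i^\AAA\hada\ell^Q$) being spanned by two points of the form $P_i^\AAA\hada Q_j^\AAA$ (distinctness being the injectivity of $\hada Q_j^\AAA$, Lemma \ref{Lempqr}), all these lines lie in the plane $\Pi=\{h=0\}$. For $F$ I would use Theorem \ref{genset}: since $I(\ell^P)=\langle h,f\rangle$ by Proposition \ref{lines} and $Q_j^\AAA$ has no zero coordinate by Remark \ref{remarkzeri}, Theorem \ref{genset} gives $I(Q_j^\AAA\hada\ell^P)=\langle h^{\hada Q_j^\AAA},f^{\hada Q_j^\AAA}\rangle$, so in particular $f^{\hada Q_j^\AAA}$ vanishes on $Q_j^\AAA\hada\ell^P$; and $P_i^\AAA\hada Q_j^\AAA$ lies on $Q_j^\AAA\hada\ell^P$ by Corollary \ref{corgenset} (applied to $P_i^\AAA\in\ell^P$). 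Hence the factor $f^{\hada Q_j^\AAA}$ of $F$ kills $P_i^\AAA\hada Q_j^\AAA$, so $F\in I(Z_{a,b}^\AAA)$; symmetrically, using $I(\ell^Q)=\langle h,g\rangle$ and $P_i^\AAA\hada Q_j^\AAA\in P_i^\AAA\hada\ell^Q$, one gets $G\in I(Z_{a,b}^\AAA)$.

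This gives $J:=\langle h,F,G\rangle\subseteq I(Z_{a,b}^\AAA)$, and I would upgrade this to an equality by showing that $h,F,G$ is a regular sequence in $k[x_0,\dots,x_3]$. Granting that, $V(J)$ is a complete intersection of degree $1\cdot b\cdot a=ab$; since $V(J)\supseteq Z_{a,b}^\AAA$ and, by Theorem \ref{IC}, $Z_{a,b}^\AAA$ is a reduced set of exactly $ab$ points, the degree equality forces $V(J)=Z_{a,b}^\AAA$ scheme-theoretically, and a complete-intersection ideal is saturated, so $J=I\bigl(V(J)\bigr)=I(Z_{a,b}^\AAA)$. To obtain the regular sequence, note $h$ is a nonzero linear form, so it suffices to check that the images $\bar F,\bar G$ of $F,G$ in $k[\Pi]\cong k[\PP^2]$ are coprime. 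Inside $\Pi$ one has $V(\bar F)=\bigcup_{j\in\II(b)}(Q_j^\AAA\hada\ell^P)$ and $V(\bar G)=\bigcup_{i\in\II(a)}(P_i^\AAA\hada\ell^Q)$; the lines within each family are pairwise distinct by Lemma \ref{LempqL} (which applies because $\ell^P\cap\Delta_1=\ell^Q\cap\Delta_1=\emptyset$, as in Remark \ref{rmkLD}, using that the $A_i$ are distinct), and Theorem \ref{IC} — which presents $Z_{a,b}^\AAA$ exactly as the $ab$ pairwise intersections $P_i^\AAA\hada\ell^Q\cap Q_j^\AAA\hada\ell^P$ — guarantees that no line of one family coincides with a line of the other. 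Hence $\gcd(\bar F,\bar G)=1$ and $h,F,G$ is a regular sequence.

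The step I expect to be the main obstacle is the last one: showing that the ``$P$-family'' $\{P_i^\AAA\hada\ell^Q\}_{i\in\II(a)}$ and the ``$Q$-family'' $\{Q_j^\AAA\hada\ell^P\}_{j\in\II(b)}$ of lines in $\Pi$ have no member in common (without this, $Z_{a,b}^\AAA$ could contain too many collinear points to be a complete intersection). If I wanted to prove it directly rather than quote Theorem \ref{IC}, I would parametrise a point of $Q_j^\AAA\hada\ell^P$ as $\bigl[(j\alpha_t+\beta_t)(\lambda\alpha_t+\mu\beta_t)/(\alpha_t\beta_t)\bigr]_{t}$ and a point of $P_i^\AAA\hada\ell^Q$ as $\bigl[(\alpha_t+i\beta_t)(\rho\alpha_t+\nu\beta_t)/(\alpha_t\beta_t)\bigr]_{t}$, and use that a binary quadratic form is determined by its values at the four distinct points $A_0,\dots,A_3$ to conclude that these two lines coincide only when the fixed linear factors $j\alpha_t+\beta_t$ and $\alpha_t+i\beta_t$ are proportional, i.e.\ only when $ij=1$. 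Since this is exactly the geometric content already built into Theorem \ref{IC}, in practice I would simply invoke that theorem; the remaining ingredients (the determinant identity for $h$ and the elimination bookkeeping via Theorem \ref{genset}) are routine.
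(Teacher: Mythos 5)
Your proposal is correct and follows the same basic route as the paper: identify $h$ as the equation of the common plane, use Theorem \ref{genset} to see that $\langle h, f^{\hada Q_j^\AAA}\rangle$ and $\langle h, g^{\hada P_i^\AAA}\rangle$ are the ideals of the lines $Q_j^\AAA\hada\ell^P$ and $P_i^\AAA\hada\ell^Q$, and then pass to the products. The main difference is that you actually carry out the final step that the paper leaves implicit: the paper simply concludes ``by Theorem \ref{IC}'' that $h,F,G$ generate $I(Z_{a,b}^\AAA)$, whereas you justify this by showing $h,F,G$ is a regular sequence (reducing to coprimality of $\bar F,\bar G$ in $k[H]\cong k[\PP^2]$, i.e.\ to the two families of lines having no common member), invoking the B\'ezout degree count against the $ab$ reduced points of Theorem \ref{IC}, and using saturatedness of complete intersection ideals. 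That added argument is the real content of the word ``generated'' and is a genuine improvement in rigor; your determinantal verification that $h$ vanishes on every $P_i^\AAA\hada Q_j^\AAA$ also replaces the paper's identification of the plane through three special points, to the same effect. Finally, your direct analysis of when $P_i^\AAA\hada\ell^Q=Q_j^\AAA\hada\ell^P$ (exactly when $ij=1$) is a worthwhile observation: it shows the coprimality you need can only fail when $1\in{\mathcal{I}}(a)\cap{\mathcal{I}}(b)$, a case that the stated hypotheses of Theorem \ref{IC} (and hence of this corollary) do not exclude, although the later results of the paper do impose $1\notin{\mathcal{I}}(a)\cup{\mathcal{I}}(b)$; to be airtight you should either add that hypothesis or note that it is needed, since deferring entirely to Theorem \ref{IC} inherits this gap from the paper.
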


\begin{proof}
Recall that $h$ and $f$ are the equation of $\ell^P$ and $h$ and $g$ are the equations of $\ell^Q$. By Theorem \ref{genset}, the equations of $Q_j^\AAA\hada \ell^P$ are given by $h^{\hada Q_j^\AAA}$ and $f^{\hada Q_j^\AAA}$ and the equations of $P_i^\AAA\hada \ell^Q$ are given by $h^{\hada P_i^\AAA}$ and $g^{\hada P_i^\AAA}$.
By Theorem \ref{IC}, since all the lines $Q_j^\AAA\hada \ell^P$ and $P_i^\AAA\hada \ell^Q$ are coplanar, then one of the generators for the ideal of each of them can be chosen to be the equation of the plane $H$ where they lie. Since this plane contains $P_0^\AAA\hada Q_0^\AAA=[1:1:1:1]$, $P_{i_1}^\AAA\hada Q_0^\AAA=P_{i_1}^\AAA$ and $P_0^\AAA\hada Q_{i_1}^\AAA=Q_{i_1}^\AAA$, we get that the equation of $H$ is exactly $h$.
Thus the ideal of $Q_j^\AAA\hada \ell^P$ is generated by $h$ and $f^{\hada Q_j^\AAA}$, while the ideal of $P_i^\AAA\hada \ell^Q$ is generated by $h$ and $g^{\hada P_i^\AAA}$, from which we get, by Theorem \ref{IC}, that $Z_{a,b}^\AAA$ is generated by $h, f^{\hada Q_{i_0}^\AAA}\cdots f^{\hada Q_{i_{b-1}}^\AAA},g^{\hada P_{i_0}^\AAA}\cdots g^{\hada P_{i_{a-1}}^\AAA}$.

\end{proof}



\section{Stick figures of lines via Hadamard product}\label{sticksec}
In this section we show how to get, via the Hadamard product, the stick figure of lines, in $\PP^3$, required for the construction in \cite{MN}.

To this aim, we consider, for a suitable choice of ${\mathcal{I}(a)}$ and ${\mathcal{I}(b)}$, the set $Z_{a,b}^\AAA$  defined in the previous section, and the line $L^\AAA$ defined in (\ref{eqline}) and we take their Hadamard product   $Z_{a,b}^\AAA\hada L^\AAA$. 

Before proving that $Z_{a,b}^\AAA\hada L^\AAA$ is a stick figure, we need two  preliminary lemmas.

\begin{Lem}\label{lemell}
 If $A_i\notin \mathcal{W}$, for $i=0,\dots, 3$, then $\ell^P\cap \Delta_0=\emptyset$ and $\ell^Q\cap \Delta_0=\emptyset$.
\end{Lem}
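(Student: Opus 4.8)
The plan is to read off the conclusion directly from the explicit parametrizations of $\ell^P$ and $\ell^Q$ that are already in hand, reducing everything to the impossibility of having three of the ratios $\alpha_i/\beta_i$ (equivalently $\beta_i/\alpha_i$) coincide. No new machinery is needed; only the standing assumptions that the $A_i$ are pairwise distinct and lie off $\Delta_0$ enter.

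First I would record the two elementary consequences of the hypotheses: since $A_i=[\alpha_i:\beta_i]\in\PP^1\setminus\Delta_0$, all the $\alpha_i$ and all the $\beta_i$ are nonzero, and since the $A_i$ are pairwise distinct the four ratios $\beta_0/\alpha_0,\dots,\beta_3/\alpha_3$ are pairwise distinct (and likewise the $\alpha_i/\beta_i$). Next I would write a general point of $\ell^P$ as $\lambda P_0^\AAA+\mu S_P$ with $[\lambda:\mu]\in\PP^1$, using that $S_P=[\tfrac{\beta_0}{\alpha_0}:\tfrac{\beta_1}{\alpha_1}:\tfrac{\beta_2}{\alpha_2}:\tfrac{\beta_3}{\alpha_3}]=P_1^\AAA-P_0^\AAA$ lies on $\ell^P$ (as shown in the proof of Proposition \ref{lines}, and $S_P\neq P_0^\AAA$ again by distinctness, so these two points do span $\ell^P$). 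Concretely this point has $s$-th homogeneous coordinate $(\lambda\alpha_s+\mu\beta_s)/\alpha_s$, so after multiplying through by $\alpha_0\alpha_1\alpha_2\alpha_3\neq0$ it equals
\[
[\,\alpha_1\alpha_2\alpha_3(\lambda\alpha_0+\mu\beta_0):\alpha_0\alpha_2\alpha_3(\lambda\alpha_1+\mu\beta_1):\alpha_0\alpha_1\alpha_3(\lambda\alpha_2+\mu\beta_2):\alpha_0\alpha_1\alpha_2(\lambda\alpha_3+\mu\beta_3)\,].
\]

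Then I would argue by contradiction. Suppose such a point lies in $\Delta_0$, i.e.\ it is one of the four coordinate points of $\PP^3$; then at least three of its homogeneous coordinates vanish, and since every $\alpha_k\neq0$ this forces $\lambda\alpha_i+\mu\beta_i=0$ for at least three indices $i$. If $\mu=0$ we get $\lambda\alpha_i=0$ for three $i$, hence $\lambda=0$, impossible in $\PP^1$. If $\mu\neq0$, normalize $\mu=1$; then $\lambda=-\beta_i/\alpha_i$ for three distinct indices $i$, so two of the ratios $\beta_i/\alpha_i$ coincide, contradicting the distinctness of the $A_i$. Hence $\ell^P\cap\Delta_0=\emptyset$. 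The argument for $\ell^Q$ is word for word the same, with $S_Q=[\tfrac{\alpha_0}{\beta_0}:\tfrac{\alpha_1}{\beta_1}:\tfrac{\alpha_2}{\beta_2}:\tfrac{\alpha_3}{\beta_3}]=Q_1^\AAA-Q_0^\AAA$ in place of $S_P$ and the roles of the $\alpha_i$ and $\beta_i$ interchanged.

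There is no genuine obstacle in this lemma; the only point requiring a little care is not to forget the ``point at infinity'' $S_P$ (resp.\ $S_Q$) of the line, which is exactly the case $\mu\neq0,\ \lambda=0$ and is disposed of by $\beta_i\neq0$ (resp.\ $\alpha_i\neq0$). I would also remark that the proof uses only distinctness of the $A_i$ and $A_i\notin\Delta_0$; the stronger hypothesis $A_i\notin\mathcal{W}$ is included only to keep the statement uniform with the other results of Sections \ref{planarsec}–\ref{sticksec}.
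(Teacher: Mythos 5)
Your proof is correct and follows essentially the same route as the paper: write a putative coordinate point of $\ell^P$ as a linear combination of two spanning points and observe that the vanishing of three coordinates forces at least two of the ratios $\beta_i/\alpha_i$ to coincide, contradicting the distinctness of the $A_i$ (the paper spans by $P_0^\AAA$ and $P_1^\AAA$ rather than $P_0^\AAA$ and $S_P$, a purely cosmetic difference). Your closing observation is also accurate: the paper invokes $A_i\notin\mathcal{W}$ only to note $P_j^\AAA\neq E_0$, but the contradiction it derives, like yours, really uses only the distinctness of the $A_i$ and $A_i\notin\Delta_0$.
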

\begin{proof}
We prove the statement only for the $\ell^P$ since the proof is identical for  $\ell^Q$.
Suppose that, for example, $\ell^P$ intersects $\Delta_0$ in the point $E_0=[1:0:0:0]$. Notice that $P_j^\AAA\not=E_0$, for all $j$,  since $A_i\notin \mathcal{W}$ for $i=0,\dots, 3$. In particular, $P_1^\AAA$ has all coordinates different from zero. Since we are assuming that $E_0\in \ell^P$, $E_0$ can be written as a linear combination of $P_0^\AAA$ and $P_1^\AAA$, that is
\begin{equation}\label{E0}
[1:0:0:0]=\lambda [1:1:1:1]+\mu \left[1+\frac{\beta_0}{\alpha_0}:1+\frac{\beta_1}{\alpha_1}:1+\frac{\beta_2}{\alpha_2}:1+\frac{\beta_3}{\alpha_3}\right]
\end{equation}
which is possible only if
\[
\frac{\lambda}{\mu}=-\frac{\beta_1+\alpha_1}{\alpha_1} \mbox{ and } \frac{\alpha_1}{\beta_1}=\frac{\alpha_2}{\beta_2} =\frac{\alpha_3}{\beta_3}
\]
which is a contradiction since the points $A_i$ are distinct.
\end{proof}

\begin{Lem}\label{lemprijkl}
Let $r_{ijkl}$ be the line through $P_i^\AAA\hada Q_j^\AAA$ and $P_k^\AAA\hada Q_l^\AAA$. If $A_i\notin \mathcal{W}$, for $i=0,\dots, 3$, then $r_{ijkl}\cap \Delta_0=\emptyset$.
\end{Lem}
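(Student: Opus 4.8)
The plan is to reduce the statement, as in the proof of Lemma \ref{lemell}, to a collinearity condition among explicitly known points, and then to show that this collinearity forces a coincidence among the $A_i$'s, contradicting their distinctness. Concretely, suppose $r_{ijkl}$ meets $\Delta_0$, say in the coordinate point $E_m = [0:\dots:1:\dots:0]$ (the $1$ in the $m$-th slot); by symmetry of the coordinates it suffices to treat one such $E_m$, e.g.\ $E_0 = [1:0:0:0]$. First I would record that $E_0$ is not itself one of the points $P_i^\AAA\hada Q_j^\AAA$: indeed by Remark \ref{remarkzeri} the hypothesis $A_i\notin\mathcal{W}$ gives $P_i^\AAA,Q_j^\AAA\notin\Delta_2$, hence $P_i^\AAA\hada Q_j^\AAA$ has all coordinates nonzero and cannot equal $E_0$. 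So if $E_0\in r_{ijkl}$ then the three points $P_i^\AAA\hada Q_j^\AAA$, $P_k^\AAA\hada Q_l^\AAA$, $E_0$ are collinear with the first two distinct (distinctness is Theorem \ref{IC}, since $(i,j)\neq(k,l)$ is implicit when the line $r_{ijkl}$ is defined).

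Next I would write out the collinearity as the vanishing of all $2\times 2$ minors of the $2\times 3$ matrix obtained from the last three coordinates of $P_i^\AAA\hada Q_j^\AAA$ and $P_k^\AAA\hada Q_l^\AAA$ (the first coordinates being irrelevant once we project away $E_0$). Using
\[
P_i^\AAA\hada Q_j^\AAA=\left[\cdots:\frac{(\alpha_s+i\beta_s)(j\alpha_s+\beta_s)}{\alpha_s\beta_s}:\cdots\right],
\]
the condition that the $s$-th and $m$-th coordinates be proportional for $s,m\in\{1,2,3\}$ becomes
\[
(\alpha_s+i\beta_s)(j\alpha_s+\beta_s)\,\alpha_m\beta_m\,(\alpha_m+k\beta_m)(l\alpha_m+\beta_m)
=(\alpha_m+i\beta_m)(j\alpha_m+\beta_m)\,\alpha_s\beta_s\,(\alpha_s+k\beta_s)(l\alpha_s+\beta_s),
\]
i.e.\ $\frac{(\alpha_s+i\beta_s)(j\alpha_s+\beta_s)}{\alpha_s\beta_s(\alpha_s+k\beta_s)(l\alpha_s+\beta_s)}$ is the same for $s=1,2,3$. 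This is an equation in $A_s=[\alpha_s:\beta_s]$ of the same shape as equation (\ref{eqbr}) in the proof of Theorem \ref{IC}, except for the extra factor $\alpha_s\beta_s$ in the denominator; clearing denominators yields, for each pair $(s,m)$, a polynomial identity which, viewed as a polynomial in $\alpha_s$ with $\beta_s$ as parameter, I would analyze via its discriminant exactly as in Theorem \ref{IC}. I expect the solution set for each fixed pair to be a short explicit list: $[\alpha_s:\beta_s]$ equals $[\alpha_m:\beta_m]$, or $[\alpha_s:\beta_s]$ lies on one of finitely many ``special'' values determined by $i,j,k,l$ and one other index.

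The decisive point — and the main obstacle — is the bookkeeping: I must show that \emph{every} solution of the full system (all pairs $s,m$ simultaneously) forces at least two of $A_0,A_1,A_2,A_3$ to coincide, which contradicts the standing hypothesis that the $A_i$ are distinct. As in Theorem \ref{IC}, this comes down to a finite case check over the possible combinations of "$A_s=A_m$'' versus "$A_s$ takes a special value'', and one verifies that in each surviving branch a pigeonhole among the four indices produces an equality $A_p=A_q$. I would organize this check by the same enumeration (cases i)–viii)) used in Theorem \ref{IC}, noting that the extra $\alpha_s\beta_s$ factor does not introduce genuinely new branches because $A_s\notin\mathcal{W}$ (in particular $\alpha_s,\beta_s\neq 0$, so $\alpha_s\beta_s$ never vanishes and can be absorbed). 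Finally, the same argument applies verbatim to $E_1,E_2,E_3$ by permuting coordinates, so $r_{ijkl}\cap\Delta_0=\emptyset$ in all cases.
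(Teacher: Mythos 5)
Your proposal follows essentially the same route as the paper's proof in its main case $i\neq k$, $j\neq l$: assume a coordinate point lies on $r_{ijkl}$, translate this into proportionality of the remaining three coordinates of $P_i^\AAA\hada Q_j^\AAA$ and $P_k^\AAA\hada Q_l^\AAA$, and reduce to the case analysis of Theorem \ref{IC}, observing that having one coordinate fewer still forces two of the $A_i$ to coincide. Two points need fixing. First, your displayed cross-multiplied identity is incorrect: the factors $\alpha_s\beta_s$ (resp.\ $\alpha_m\beta_m$) appear in the denominators of the $s$-th (resp.\ $m$-th) coordinates of \emph{both} points, so they cancel in the ratio $[P_i^\AAA\hada Q_j^\AAA]_s/[P_k^\AAA\hada Q_l^\AAA]_s$; the resulting system is literally (\ref{eqbr}) with the reference index $0$ replaced by the deleted coordinate, and there is no ``extra factor'' at all. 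This matters because your fallback justification --- that the extra factor ``can be absorbed since it never vanishes'' --- would not be valid if the factor were genuinely present: an additional degree in $\alpha_s$ would change the discriminant analysis and could create new solution branches. Second, the enumeration i)--viii) of Theorem \ref{IC} is derived under $i\neq k$ and $j\neq l$ (otherwise $\rho$ degenerates), so your uniform treatment needs a separate word when $i=k$ or $j=l$; the paper dispatches these cases more cleanly by noting that $r_{ijkl}$ is then the line $P_i^\AAA\hada\ell^Q$ or $Q_j^\AAA\hada\ell^P$ and invoking Corollary \ref{corgenset} together with Lemma \ref{lemell}. (If you insist on the uniform computation, these degenerate cases collapse to $(j-l)(\alpha_s\beta_m-\alpha_m\beta_s)=0$, which contradicts the distinctness of the $A_i$ directly.) With these repairs your argument is sound and coincides with the paper's.
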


\begin{proof}
We distinguish three cases:
\begin{itemize}
\item[(1)] $i=k$ and $j\not=l$,
\item[(2)] $i\not=k$ and $j=l$,
\item[(3)] $i\not=k$ and $j\not=l$.
\end{itemize}
If we are in case (1), the line through $P_i^\AAA\hada Q_j^\AAA$  and $P_i^\AAA\hada Q_l^\AAA$ is the line $P_i^\AAA\hada \ell^Q$ which does not intersects $\Delta_0$ by Corollary \ref{corgenset} and Lemma \ref{lemell}.
Similarly, if we are in case (2), the line through $P_i^\AAA\hada Q_j^\AAA$ and $P_k^\AAA\hada Q_j^\AAA$ is the line $Q_j^\AAA\hada \ell^P$ which, again, does not intersects $\Delta_0$ by Corollary \ref{corgenset} and Lemma \ref{lemell}.

For case (3), suppose that $r_{ijkl}$ intersects $\Delta_0$ in $E_0=[1:0:0:0]$ (the other cases being similar). Notice that $P_i^\AAA\hada Q_j^\AAA\not=E_0$, and $P_k^\AAA\hada Q_l^\AAA\not=E_0$  since $A_i\notin \mathcal{W}$ for $i=0,\dots, 3$. Since we are assuming that $E_0\in r_{ijkl}$, $E_0$ can be written as a linear combination of $P_i^\AAA\hada Q_j^\AAA$ and $P_k^\AAA\hada Q_l^\AAA$
{\small{
\begin{equation}\label{E0ijkr}
\begin{array}{l}
[1:0:0:0]=\\
\\
\lambda \left[ \frac{(\alpha_0+i\beta_0)(j\alpha_0+\beta_0)}{\alpha_0\beta_0}: \frac{(\alpha_1+i\beta_1)(j\alpha_1+\beta_1)}{\alpha_1\beta_1}:\frac{(\alpha_2+i\beta_2)(j\alpha_2+\beta_2)}{\alpha_2\beta_2}: \frac{(\alpha_3+i\beta_3)(j\alpha_3+\beta_3)}{\alpha_3\beta_3}\right]+\\
\\
\mu\left[ \frac{(\alpha_0+k\beta_0)(l\alpha_0+\beta_0)}{\alpha_0\beta_0}: \frac{(\alpha_1+k\beta_1)(l\alpha_1+\beta_1)}{\alpha_1\beta_1}:\frac{(\alpha_2+k\beta_2)(l\alpha_2+\beta_2)}{\alpha_2\beta_2}: \frac{(\alpha_3+k\beta_3)(l\alpha_3+\beta_3)}{\alpha_3\beta_3}\right]
\end{array}
\end{equation}}}
and looking at all the coordinates but the first, this means
\[
\lambda  \frac{(\alpha_s+i\beta_s)(j\alpha_s+\beta_s)}{\alpha_s\beta_s}=-\mu\frac{(\alpha_s+k\beta_s)(l\alpha_s+\beta_s)}{\alpha_s\beta_s}
\mbox{ for } s=1,2,3
\]
or equivalently 
\begin{equation}\label{fineqijkl}
-\frac{\mu}{\lambda}= \frac{(\alpha_s+i\beta_s)(j\alpha_s+\beta_s)}{(\alpha_s+k\beta_s)(l\alpha_s+\beta_s)} \mbox{ for } s=1,2,3
\end{equation}
which gives rise to the same set of equations (\ref{eqbr}) of Theorem \ref{IC}.
Arguing as in the proof of Theorem \ref{IC}, but considering that now we have one less equation (since we are not considering the first coordinate), we get that any non-zero solution of (\ref{fineqijkl}) requires that there is a pair $(i_1,i_2)$ of indices such that $\frac{\alpha_{i_1}}{\beta_{i_1}}=\frac{\alpha_{i_2}}{\beta_{i_2}}$ which is a contradiction since the points $A_i$ are distinct.
 \end{proof}

Coming back to $Z_{a,b}^\AAA\hada L^\AAA$, we first prove that no pairs of points $P_i^\AAA\hada Q_j^\AAA,P_k^\AAA\hada Q_l^\AAA\in Z_{a,b}^\AAA$ can give $P_i^\AAA\hada Q_j^\AAA\hada L^\AAA=P_k^\AAA\hada Q_l^\AAA\hada L^\AAA$.
\begin{Prop}\label{cardinalitylines}
In the same hypothesis of Theorem \ref{IC}, $Z_{a,b}^\AAA\hada L^\AAA$ is a set of $ab$ distinct lines, for any choice of positive integers $a$ and $b$ and sets ${\mathcal{I}}(a)$ and ${\mathcal{I}}(b)$.
\end{Prop}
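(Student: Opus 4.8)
The goal is to show that the $ab$ lines $P_i^\AAA\hada Q_j^\AAA\hada L^\AAA$, as $(i,j)$ ranges over $\mathcal{I}(a)\times\mathcal{I}(b)$, are pairwise distinct. First I would reduce to a statement about the base points: since $P_i^\AAA,Q_j^\AAA\notin\Delta_2$ (Remark \ref{remarkzeri}), the point $P_i^\AAA\hada Q_j^\AAA$ has no zero coordinate, so $P_i^\AAA\hada Q_j^\AAA\notin\Delta_{n-1}$ with $n=3$, and by Lemma \ref{LempL} each $P_i^\AAA\hada Q_j^\AAA\hada L^\AAA$ is genuinely a line (not empty, not a point). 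So the content is in showing that two different index pairs give two different lines.

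Next I would set up an application of Lemma \ref{LempqL} with $L=L^\AAA$, $m=1$, $n=3$, and the two points $P=P_i^\AAA\hada Q_j^\AAA$ and $Q=P_k^\AAA\hada Q_l^\AAA$. The hypotheses of that lemma require: (a) $P\neq Q$; (b) $L^\AAA\cap\Delta_{n-m-1}=L^\AAA\cap\Delta_1=\emptyset$; (c) $\langle P,Q\rangle\cap\Delta_{n-m-2}=\langle P,Q\rangle\cap\Delta_0=\emptyset$. Condition (a) is exactly Theorem \ref{IC} (the points of $Z_{a,b}^\AAA$ are $ab$ distinct points), given $(i,j)\neq(k,l)$. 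Condition (b) is Remark \ref{rmkLD}, which states precisely $L^\AAA\cap\Delta_1=\emptyset$ from the distinctness of the $A_i$. Condition (c) is the reason Lemmas \ref{lemell} and \ref{lemprijkl} were proved: the line $\langle P,Q\rangle$ is exactly the line $r_{ijkl}$ of Lemma \ref{lemprijkl} (or, in the degenerate sub-cases $i=k$ or $j=l$, one of the lines $P_i^\AAA\hada\ell^Q$ or $Q_j^\AAA\hada\ell^P$, handled inside that same lemma), and Lemma \ref{lemprijkl} tells us $r_{ijkl}\cap\Delta_0=\emptyset$. Hence all three hypotheses hold, and Lemma \ref{LempqL} yields $P\hada L^\AAA\neq Q\hada L^\AAA$, i.e. $P_i^\AAA\hada Q_j^\AAA\hada L^\AAA\neq P_k^\AAA\hada Q_l^\AAA\hada L^\AAA$.

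Finally, combining the two observations: the $ab$ sets $P_i^\AAA\hada Q_j^\AAA\hada L^\AAA$ are all lines, and they are pairwise distinct, so $Z_{a,b}^\AAA\hada L^\AAA = \bigcup_{i\in\mathcal{I}(a),\,j\in\mathcal{I}(b)} P_i^\AAA\hada Q_j^\AAA\hada L^\AAA$ is a union of exactly $ab$ distinct lines. I do not expect a genuine obstacle here: the proposition is essentially a bookkeeping corollary that packages Theorem \ref{IC}, Remark \ref{rmkLD}, and Lemmas \ref{lemell}--\ref{lemprijkl} into the hypotheses of Lemma \ref{LempqL}. The only point requiring a little care is making sure the argument covers the cases where $i=k$ (but $j\neq l$) or $j=l$ (but $i\neq k$), where $\langle P,Q\rangle$ is not literally called $r_{ijkl}$ but is instead one of the Hadamard-translated lines $P_i^\AAA\hada\ell^Q$ or $Q_j^\AAA\hada\ell^P$; but Lemma \ref{lemprijkl} was explicitly written to dispatch all three cases, so nothing new is needed.
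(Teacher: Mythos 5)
Your proposal is correct and follows essentially the same route as the paper: Lemma \ref{LempL} gives that each $P_i^\AAA\hada Q_j^\AAA\hada L^\AAA$ is a line, and Lemma \ref{LempqL} (with hypotheses supplied by Theorem \ref{IC}, Remark \ref{rmkLD}, and Lemmas \ref{lemell}--\ref{lemprijkl}) gives pairwise distinctness. The only cosmetic difference is that the paper re-runs the three-case split ($i=k$, $j=l$, neither) explicitly inside the proof, whereas you delegate all three cases to Lemma \ref{lemprijkl} at once; the substance is identical.
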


\begin{proof}
Since, by hypothesis, $P_i^\AAA\hada Q_j^\AAA\notin \Delta_2$, by Lemma \ref{LempL}, one has that $P_i^\AAA\hada Q_j^\AAA\hada L^\AAA$ is a line for all  $i$ and $j$ with  $i \in {\mathcal{I}}(a)$ and $j \in {\mathcal{I}}(b)$.  Let us show now that if $P_i^\AAA\hada Q_j^\AAA\not=P_k^\AAA\hada Q_l^\AAA$ then $P_i^\AAA\hada Q_j^\AAA\hada L^\AAA\not=P_k^\AAA\hada Q_l^\AAA\hada L^\AAA$.
We distinguish three cases.

If $i=k$ and $j\not=l$ then $P_i^\AAA\hada Q_j^\AAA \not=P_i^\AAA\hada Q_l^\AAA$. By Lemma \ref{lemell}, $\ell^Q\cap \Delta_0=\emptyset$ which implies $P_i^\AAA\hada \ell^Q\cap\Delta_0=\emptyset$. Since
\begin{itemize}
\item[i)] $P_i^\AAA\hada Q_j^\AAA, P_i^\AAA\hada Q_l^\AAA\notin \Delta_2$,
\item[ii)] $L^\AAA\notin\Delta_1$,
\item[iii)] $\langle P_i^\AAA\hada Q_j^\AAA, P_i^\AAA\hada Q_l^\AAA\rangle=P_i^\AAA\hada \ell^Q$,
\end{itemize}
we can apply Lemma \ref{LempqL}, obtaining $P_i^\AAA\hada Q_j^\AAA\hada L^\AAA\not=P_i^\AAA\hada Q_l^\AAA\hada L^\AAA$.

The  case $i\not=k$ and $j=l$ is similar to the previous one. The same proof, but using the line  $\ell^P$, gives $P_i^\AAA\hada Q_j^\AAA\hada L^\AAA\not=P_k^\AAA\hada Q_j^\AAA\hada L^\AAA$.

Finally if $i\not=k$ and $j\not=l$ then $P_i^\AAA\hada Q_j^\AAA \not=P_k^\AAA\hada Q_l^\AAA$. By Lemma \ref{lemprijkl}, $r_{ijkl}\cap \Delta_0=\emptyset$. Since
\begin{itemize}
\item[i)] $P_i^\AAA\hada Q_j^\AAA, P_k^\AAA\hada Q_l^\AAA\notin \Delta_2$,
\item[ii)] $L^\AAA\notin\Delta_1$,
\item[iii)] $\langle P_i^\AAA\hada Q_j^\AAA, P_k^\AAA\hada Q_l^\AAA\rangle=r_{ijkl}$.
\end{itemize}
we can again apply Lemma \ref{LempqL}, obtaining $P_i^\AAA\hada Q_j^\AAA\hada L^\AAA\not=P_k^\AAA\hada Q_l^\AAA\hada L^\AAA$.

Thus we conclude that $Z_{a,b}^\AAA\hada L^\AAA$ consists of $ab$ distinct lines.
\end{proof}

We study now the intersection properties of the set $Z_{a,b}^\AAA\star L^\AAA$. More precisely we have the following.

\begin{Prop}\label{interstick} Assume that $1\notin  {\mathcal{I}}(a)\cup  {\mathcal{I}}(b)$. In the same hypothesis of Theorem \ref{IC}, let $P_i^\AAA\star Q_j^\AAA$ and $P_k^\AAA\star Q_l^\AAA$ in $Z_{a,b}^\AAA$. Then $P_i^\AAA\star Q_j^\AAA \star L^\AAA\cap P_k^\AAA\star Q_l^\AAA\star L^\AAA\not=\emptyset$ if and only if $i=k$ or $j=l$.
Moreover, 
\begin{itemize}
\item[i)] if $j\not= l$, the intersection $P_i^\AAA\star Q_j^\AAA \star L^\AAA\cap P_i^\AAA\star Q_l^\AAA\star L^\AAA$ is given by

{\large{\begin{equation}\label{punto1}
\left[\begin{array}{c}
-\frac{ ( \alpha_0+i\beta_0)(j  \alpha_0+\beta_0)(l \alpha_0+\beta_0)}{\alpha_0\beta_0(\alpha_0\beta_1-\alpha_1\beta_0)(\alpha_0\beta_2-\alpha_2\beta_0)(\alpha_0\beta_3-\alpha_3\beta_0)}\\
\\
\frac{ ( \alpha_1+i\beta_1)(j  \alpha_1+\beta_1)(l \alpha_1+\beta_1)}{\alpha_1\beta_1(\alpha_0\beta_1-\alpha_1\beta_0)(\alpha_1\beta_2-\alpha_2\beta_1)(\alpha_1\beta_3-\alpha_3\beta_1)}\\
\\
-\frac{( \alpha_2+i\beta_2)(j  \alpha_2+\beta_2)(l \alpha_2+\beta_2) }{\alpha_2\beta_2(\alpha_0\beta_2-\alpha_2\beta_0)(\alpha_1\beta_2-\alpha_2\beta_1)(\alpha_2\beta_3-\alpha_3\beta_2)}\\
\\
\frac{( \alpha_3+i\beta_3)(j  \alpha_3+\beta_3) (l \alpha_3+\beta_3)}{\alpha_3\beta_3(\alpha_0\beta_3-\alpha_3\beta_0)(\alpha_1\beta_3-\alpha_3\beta_1)(\alpha_2\beta_3-\alpha_3\beta_2)}
\end{array}\right];
\end{equation}}}
\item[ii)] if $i\not= k$, the intersection $P_i^\AAA\star Q_j^\AAA \star L^\AAA\cap P_k^\AAA\star Q_j^\AAA\star L^\AAA$ is given by
{\large{\begin{equation}\label{punto2}
\left[\begin{array}{c}
-\frac{ ( \alpha_0+i\beta_0)( \alpha_0+k\beta_0)(j  \alpha_0+\beta_0)}{\alpha_0\beta_0(\alpha_0\beta_1-\alpha_1\beta_0)(\alpha_0\beta_2-\alpha_2\beta_0)(\alpha_0\beta_3-\alpha_3\beta_0)}\\
\\
\frac{ ( \alpha_1+i\beta_1)( \alpha_1+k\beta_1)(j  \alpha_1+\beta_1)}{\alpha_1\beta_1(\alpha_0\beta_1-\alpha_1\beta_0)(\alpha_1\beta_2-\alpha_2\beta_1)(\alpha_1\beta_3-\alpha_3\beta_1)}\\
\\
-\frac{ ( \alpha_2+i\beta_2)( \alpha_2+k\beta_2)(j  \alpha_2+\beta_2) }{\alpha_2\beta_2(\alpha_0\beta_2-\alpha_2\beta_0)(\alpha_1\beta_2-\alpha_2\beta_1)(\alpha_2\beta_3-\alpha_3\beta_2)}\\
\\
\frac{ ( \alpha_3+i\beta_3)( \alpha_3+k\beta_3)(j  \alpha_3+\beta_3)}{\alpha_3\beta_3(\alpha_0\beta_3-\alpha_3\beta_0)(\alpha_1\beta_3-\alpha_3\beta_1)(\alpha_2\beta_3-\alpha_3\beta_2)}
\end{array}\right].
\end{equation}}}
\end{itemize}
\end{Prop}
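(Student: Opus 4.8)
The plan is to make the lines in $Z_{a,b}^\AAA\star L^\AAA$ explicit and then reduce the statement to linear algebra in $\PP^3$. Write $r_{ij}:=P_i^\AAA\star Q_j^\AAA\star L^\AAA$. Since $A_i\notin\mathcal{W}$, Remark~\ref{remarkzeri} gives $P_i^\AAA\star Q_j^\AAA\notin\Delta_2$ for all $i\in\mathcal{I}(a)$, $j\in\mathcal{I}(b)$, so by Lemma~\ref{LempL} each $r_{ij}$ is a line, and since $P_i^\AAA\star Q_j^\AAA$ has no zero coordinate, Theorem~\ref{genset} shows $r_{ij}$ is cut out by the Hadamard transforms, by that point, of the two forms in (\ref{eqline}), namely
\[
\sum_{s=0}^{3}\frac{\alpha_s^{2}\beta_s}{(\alpha_s+i\beta_s)(j\alpha_s+\beta_s)}\,x_s=0,\qquad
\sum_{s=0}^{3}\frac{\alpha_s\beta_s^{2}}{(\alpha_s+i\beta_s)(j\alpha_s+\beta_s)}\,x_s=0,
\]
the denominators being nonzero because $A_s\in\PP^1\setminus\Delta_0$ and $A_s\notin\mathcal{W}$. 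Two lines of $\PP^3$ meet exactly when the $4\times4$ matrix assembled from their four defining forms is singular, and by Proposition~\ref{cardinalitylines} the lines $r_{ij}$ and $r_{kl}$ are distinct whenever $(i,j)\neq(k,l)$, so in that case the intersection, if nonempty, is a single point.

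For the dichotomy I would analyse the $4\times4$ matrix $\mathcal M$ built from the four forms cutting out $r_{ij}$ and $r_{kl}$. After clearing the (nonzero) common denominators column by column and expanding each resulting entry $\alpha_s(\alpha_s+m\beta_s)(n\alpha_s+\beta_s)$, $\beta_s(\alpha_s+m\beta_s)(n\alpha_s+\beta_s)$ in the monomial basis $\alpha_s^{3},\alpha_s^{2}\beta_s,\alpha_s\beta_s^{2},\beta_s^{3}$, one finds that $\det\mathcal M$ is a nonzero scalar multiple of $\det C\cdot\det V$, where the $(r,s)$ entry of $V$ is the $r$-th of these monomials evaluated at $(\alpha_s,\beta_s)$ and
\[
C=\begin{pmatrix} l & 1+kl & k & 0\\ 0 & l & 1+kl & k\\ j & 1+ij & i & 0\\ 0 & j & 1+ij & i\end{pmatrix}.
\]
Since the $A_s$ are pairwise distinct, $\det V=\pm\prod_{s<t}(\alpha_s\beta_t-\alpha_t\beta_s)\neq0$, so $r_{ij}\cap r_{kl}\neq\emptyset$ if and only if $\det C=0$. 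The rows of $C$ are the coefficient vectors of the binary cubics $u\,q_1,\ v\,q_1,\ u\,q_2,\ v\,q_2$ with $q_1=(u+kv)(lu+v)$ and $q_2=(u+iv)(ju+v)$, which span $\langle u,v\rangle q_1+\langle u,v\rangle q_2$ inside the $4$-dimensional space of binary cubics; hence they are linearly dependent exactly when $q_1$ and $q_2$ share a linear factor. Comparing the four pairs of factors, this holds precisely when $i=k$, or $j=l$, or $jk=1$, or $il=1$, and the last two are impossible since $i,k\in\mathcal{I}(a)$, $j,l\in\mathcal{I}(b)$ and $1\notin\mathcal{I}(a)\cup\mathcal{I}(b)$. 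Therefore $r_{ij}\cap r_{kl}\neq\emptyset$ if and only if $i=k$ or $j=l$.

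It then remains to identify the intersection points. I would substitute the point displayed in (\ref{punto1}) (resp. (\ref{punto2})) into each of the two defining forms of $r_{ij}$: the factors $(\alpha_s+i\beta_s)(j\alpha_s+\beta_s)$ cancel, and writing $\tau_s=\alpha_s/\beta_s$ and using $\alpha_p\beta_q-\alpha_q\beta_p=\beta_p\beta_q(\tau_p-\tau_q)$ on the Plücker-type factors in the denominator, both equations reduce to a constant multiple of $\sum_{s=0}^{3}\frac{g(\tau_s)}{\prod_{t\neq s}(\tau_s-\tau_t)}$ with $g$ of degree at most $2$ (for (\ref{punto1}): $g(\tau)=l\tau^{2}+\tau$ and $g(\tau)=l\tau+1$; for (\ref{punto2}): $g(\tau)=\tau^{2}+k\tau$ and $g(\tau)=\tau+k$). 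Such a sum is the coefficient of $\tau^{3}$ in the Lagrange interpolant of $g$ at $\tau_0,\dots,\tau_3$, hence equals $0$ because $\deg g<3$. By the symmetry of (\ref{punto1}) under $j\leftrightarrow l$ and of (\ref{punto2}) under $i\leftrightarrow k$, the same point lies on the second line as well, and since the two lines are distinct it is their unique common point. Finally $A_s\notin\mathcal{W}$ forces $\alpha_s+i\beta_s$, $j\alpha_s+\beta_s$, $l\alpha_s+\beta_s$ and $\alpha_s+k\beta_s$ all nonzero, so every coordinate of the displayed point is nonzero and it is a genuine point of $\PP^3\setminus\Delta_2$.

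The only real obstacle is bookkeeping: establishing the factorization $\mathcal M=C\cdot V$ up to scalars, and tracking signs in $\det V$ (a degree-$3$ Vandermonde-type determinant) and in the rewriting of $\prod_{t\neq s}(\cdots)$ in terms of the $\tau$'s. The conceptual ingredients — the binary-cubic criterion for $\det C=0$ and the vanishing of $\sum_s g(\tau_s)/\prod_{t\neq s}(\tau_s-\tau_t)$ for $\deg g<3$ — are short. One could also bypass the binary-cubic step by expanding $\det C$ by hand and recognising it as a nonzero constant times $(i-k)(j-l)(jk-1)(il-1)$.
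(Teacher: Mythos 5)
Your proposal is correct and follows the same skeleton as the paper's proof: both write the two lines $P_i^\AAA\star Q_j^\AAA\star L^\AAA$ and $P_k^\AAA\star Q_l^\AAA\star L^\AAA$ via Hadamard transforms of the two forms in (\ref{eqline}) (legitimate since $A_s\notin\mathcal{W}$ forces all relevant coordinates to be nonzero), reduce the intersection question to the singularity of the resulting $4\times4$ coefficient matrix $\mathcal{M}$, use Proposition \ref{cardinalitylines} to rule out rank $\le 2$, and finish by substituting the displayed points into the system. Where you genuinely diverge is in how the two computational claims are justified. The paper simply asserts the closed form $\det(\mathcal{M})=c\cdot(i-k)(j-l)(jk-1)(il-1)$ with $c\neq 0$ "after computation," and likewise disposes of (\ref{punto1})--(\ref{punto2}) by "direct substitution." You instead clear denominators to factor the matrix as (Sylvester matrix of the binary quadratics $q_1=(u+kv)(lu+v)$ and $q_2=(u+iv)(ju+v)$) times a generalized Vandermonde in $(\alpha_s,\beta_s)$, so that the vanishing locus $\{i=k\}\cup\{j=l\}\cup\{jk=1\}\cup\{il=1\}$ is read off as the common-root condition for $q_1,q_2$, with the Vandermonde factor nonzero because the $A_s$ are distinct; and you verify the intersection points via the identity $\sum_s g(\tau_s)/\prod_{t\neq s}(\tau_s-\tau_t)=0$ for $\deg g<3$ after the substitution $\tau_s=\alpha_s/\beta_s$. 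Both refinements check out (I verified the row identification $u q_1, v q_1, u q_2, v q_2$ and the reduction of the substituted forms to $g(\tau)\in\{l\tau^2+\tau,\ l\tau+1\}$, resp.\ $\{\tau^2+k\tau,\ \tau+k\}$), and they buy a conceptual explanation of where the factors $(i-k)(j-l)(jk-1)(il-1)$ and the Pl\"ucker-type denominators in (\ref{punto1})--(\ref{punto2}) come from, at the cost of the sign bookkeeping you acknowledge; the paper's brute-force version is shorter to state but opaque. No gaps.
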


\begin{proof}
By Theorem \ref{genset} one has that the equations of $P_i^\AAA\star Q_j^\AAA \star L^\AAA$ are
\[
\begin{cases}
(\alpha_0x_0+\alpha_1x_1+\alpha_2x_2+\alpha_3x_3)^{\star (P_i^\AAA\star Q_j^\AAA)} =0\\
(\beta_0x_0+\beta_1x_1+\beta_2x_2+\beta_3x_3)^{\star (P_i^\AAA\star Q_j^\AAA)}=0
\end{cases}
\]
which can be written explicitly as
{\tiny{\[
\begin{cases}
\frac{\alpha_0^2   \beta_0}{(\alpha_0 + i \beta_0) (j \alpha_0 + \beta_0)} x_0+
\frac{\alpha_1^2   \beta_1}{(\alpha_1 + i \beta_1) (j \alpha_1 + \beta_1)} x_1+
\frac{\alpha_2^2   \beta_2}{(\alpha_2 + i \beta_2) (j \alpha_2 + \beta_2)} x_2+
\frac{\alpha_3^2   \beta_3}{(\alpha_3 + i \beta_3) (j \alpha_3 + \beta_3)} x_3=0\\

\frac{\alpha_0   \beta_0^2}{(\alpha_0 + i \beta_0) (j \alpha_0 + \beta_0)} x_0+
\frac{\alpha_1   \beta_1^2}{(\alpha_1 + i \beta_1) (j \alpha_1 + \beta_1)} x_1+
\frac{\alpha_2   \beta_2^2}{(\alpha_2 + i \beta_2) (j \alpha_2 + \beta_2)} x_2+
\frac{\alpha_3   \beta_3^2}{(\alpha_3 + i \beta_3) (j \alpha_3 + \beta_3)} x_3=0
\end{cases}.
\]}}
Similarly the equations of $P_k^\AAA\star Q_l^\AAA \star L^\AAA$ are
\[
\begin{cases}
(\alpha_0x_0+\alpha_1x_1+\alpha_2x_2+\alpha_3x_3)^{\star (P_k^\AAA\star Q_l^\AAA)} =0\\
(\beta_0x_0+\beta_1x_1+\beta_2x_2+\beta_3x_3)^{\star (P_k^\AAA\star Q_l^\AAA)}=0
\end{cases}
\]
which can be written explicitly as
{\tiny{\[
\begin{cases}
\frac{\alpha_0^2   \beta_0}{(\alpha_0 + k \beta_0) (l \alpha_0 + \beta_0)} x_0+
\frac{\alpha_1^2   \beta_1}{(\alpha_1 + k \beta_1) (l \alpha_1 + \beta_1)} x_1+
\frac{\alpha_2^2   \beta_2}{(\alpha_2 + k \beta_2) (l \alpha_2 + \beta_2)} x_2+
\frac{\alpha_3^2   \beta_3}{(\alpha_3 + k \beta_3) (l \alpha_3 + \beta_3)} x_3=0\\

\frac{\alpha_0   \beta_0^2}{(\alpha_0 + k \beta_0) (l \alpha_0 + \beta_0)} x_0+
\frac{\alpha_1   \beta_1^2}{(\alpha_1 + k \beta_1) (l \alpha_1 + \beta_1)} x_1+
\frac{\alpha_2   \beta_2^2}{(\alpha_2 + k \beta_2) (l \alpha_2 + \beta_2)} x_2+
\frac{\alpha_3   \beta_3^2}{(\alpha_3 + k \beta_3) (l \alpha_3 + \beta_3)} x_3=0
\end{cases}.
\]}}
Passing to the system of the two lines $P_i^\AAA\star Q_j^\AAA \star L^\AAA$ and $P_k^\AAA\star Q_l^\AAA\star L^\AAA$ 
 {\tiny{\begin{equation}\label{sistemarette}
\begin{cases}
\frac{\alpha_0^2   \beta_0}{(\alpha_0 + i \beta_0) (j \alpha_0 + \beta_0)} x_0+
\frac{\alpha_1^2   \beta_1}{(\alpha_1 + i \beta_1) (j \alpha_1 + \beta_1)} x_1+
\frac{\alpha_2^2   \beta_2}{(\alpha_2 + i \beta_2) (j \alpha_2 + \beta_2)} x_2+
\frac{\alpha_3^2   \beta_3}{(\alpha_3 + i \beta_3) (j \alpha_3 + \beta_3)} x_3=0\\

\frac{\alpha_0   \beta_0^2}{(\alpha_0 + i \beta_0) (j \alpha_0 + \beta_0)} x_0+
\frac{\alpha_1   \beta_1^2}{(\alpha_1 + i \beta_1) (j \alpha_1 + \beta_1)} x_1+
\frac{\alpha_2   \beta_2^2}{(\alpha_2 + i \beta_2) (j \alpha_2 + \beta_2)} x_2+
\frac{\alpha_3   \beta_3^2}{(\alpha_3 + i \beta_3) (j \alpha_3 + \beta_3)} x_3=0\\

\frac{\alpha_0^2   \beta_0}{(\alpha_0 + k \beta_0) (l \alpha_0 + \beta_0)} x_0+
\frac{\alpha_1^2   \beta_1}{(\alpha_1 + k \beta_1) (l \alpha_1 + \beta_1)} x_1+
\frac{\alpha_2^2   \beta_2}{(\alpha_2 + k \beta_2) (l \alpha_2 + \beta_2)} x_2+
\frac{\alpha_3^2   \beta_3}{(\alpha_3 + k \beta_3) (l \alpha_3 + \beta_3)} x_3=0\\

\frac{\alpha_0   \beta_0^2}{(\alpha_0 + k \beta_0) (l \alpha_0 + \beta_0)} x_0+
\frac{\alpha_1   \beta_1^2}{(\alpha_1 + k \beta_1) (l \alpha_1 + \beta_1)} x_1+
\frac{\alpha_2   \beta_2^2}{(\alpha_2 + k \beta_2) (l \alpha_2 + \beta_2)} x_2+
\frac{\alpha_3   \beta_3^2}{(\alpha_3 + k \beta_3) (l \alpha_3 + \beta_3)} x_3=0
\end{cases}
\end{equation}}}
one has the following matrix of coefficients
\[
{\mathcal{M}}=\begin{pmatrix}
\frac{\alpha_0^2   \beta_0}{(\alpha_0 + i \beta_0) (j \alpha_0 + \beta_0)}&
\frac{\alpha_1^2   \beta_1}{(\alpha_1 + i \beta_1) (j \alpha_1 + \beta_1)}&
\frac{\alpha_2^2   \beta_2}{(\alpha_2 + i \beta_2) (j \alpha_2 + \beta_2)} &
\frac{\alpha_3^2   \beta_3}{(\alpha_3 + i \beta_3) (j \alpha_3 + \beta_3)} \cr

\frac{\alpha_0   \beta_0^2}{(\alpha_0 + i \beta_0) (j \alpha_0 + \beta_0)} &
\frac{\alpha_1   \beta_1^2}{(\alpha_1 + i \beta_1) (j \alpha_1 + \beta_1)} &
\frac{\alpha_2   \beta_2^2}{(\alpha_2 + i \beta_2) (j \alpha_2 + \beta_2)} &
\frac{\alpha_3   \beta_3^2}{(\alpha_3 + i \beta_3) (j \alpha_3 + \beta_3)} \cr

\frac{\alpha_0^2   \beta_0}{(\alpha_0 + k \beta_0) (l \alpha_0 + \beta_0)} &
\frac{\alpha_1^2   \beta_1}{(\alpha_1 + k \beta_1) (l \alpha_1 + \beta_1)} &
\frac{\alpha_2^2   \beta_2}{(\alpha_2 + k \beta_2) (l \alpha_2 + \beta_2)} &
\frac{\alpha_3^2   \beta_3}{(\alpha_3 + k \beta_3) (l \alpha_3 + \beta_3)} \cr

\frac{\alpha_0   \beta_0^2}{(\alpha_0 + k \beta_0) (l \alpha_0 + \beta_0)} &
\frac{\alpha_1   \beta_1^2}{(\alpha_1 + k \beta_1) (l \alpha_1 + \beta_1)} &
\frac{\alpha_2   \beta_2^2}{(\alpha_2 + k \beta_2) (l \alpha_2 + \beta_2)} &
\frac{\alpha_3   \beta_3^2}{(\alpha_3 + k \beta_3) (l \alpha_3 + \beta_3)} \cr

\end{pmatrix}.
\]
Clearly this matrix has rank  greater than or equal to 3, otherwise the two lines $P_i^\AAA\star Q_j^\AAA \star L^\AAA$ and $P_k^\AAA\star Q_l^\AAA\star L^\AAA$  will be coincident, in contradiction with Proposition \ref{cardinalitylines}.

Computing the determinant of ${\mathcal{M}}$ one has
\begin{equation*}
\det({\mathcal{M}})=\frac{\left(\prod_{t=0}^3a_tb_t\right)\left(\prod_{0\leq s<r\leq 3}(a_sb_r-a_rb_s)\right) (i-k)(j-l)(jk-1)(il-1)} {\prod_{t=0}^3\big((a_t+ib_t)(a_t+kb_t)(ja_t+b_t)(la_t+b_t)\big)}.
\end{equation*}
By definition of the points $A_i$ and by Remark \ref{rmkLD} we know that the two terms  $\left(\prod_{t=0}^3a_tb_t\right)$ and $\left(\prod_{0\leq s<r\leq 3}(a_sb_r-a_rb_s)\right)$ are different from 0.
By the condition $1\notin  {\mathcal{I}}(a)\cup  {\mathcal{I}}(b)$ one as that $(jk-1)(il-1)\not=0$ for all $i,k \in  {\mathcal{I}}(a)$ and all $j,l \in {\mathcal{I}}(b)$.
Hence ${\mathcal{M}}$  has rank 4 when $i\not=k$ and $j\not=l$ and has rank 3 when $i=k$ or $j=l$, which concludes the first part of the proof.
The second part of the proof follows directly substituting the values in (\ref{punto1}) in the system (\ref{sistemarette}) taking $i=k$, and the values in (\ref{punto2}) in the same system (\ref{sistemarette}) but taking $j=l$.
\end{proof}

\begin{Rmk} \rm
Although, by  Proposition \ref{cardinalitylines}, any choice of the sets ${\mathcal{I}}(a)$ and ${\mathcal{I}}(b)$ always gives a set of $ab$ distinct lines, the condition $1\notin  {\mathcal{I}}(a)\cup  {\mathcal{I}}(b)$ of the previous proposition is mandatory to avoid extra intersections among the lines in $Z_{a,b}^\AAA\hada L^\AAA$. In fact, without this condition $Z_{a,b}^\AAA\hada L^\AAA$ could be still a stick figure, but it is not complete intersection.
\end{Rmk}

As a corollary we get the following fact.
\begin{Cor}\label{CORSTICK} Assume that $1\notin  {\mathcal{I}}(a)\cup  {\mathcal{I}}(b)$. With the same hypothesis of Theorem \ref{IC}, one has:
\begin{itemize} 
\item $P^\AAA_{i_0}\star Q^\AAA_{j} \star L^\AAA, \dots, P^\AAA_{i_{a-1}}\star Q^\AAA_j \star L^\AAA$ are coplanar for all $j\in  {\mathcal{I}}(b)$;
\item $P^\AAA_i \star Q^\AAA_{i_0}\star L^\AAA, \dots, P^\AAA_i \star Q^\AAA_{i_{b-1}} \star L^\AAA$ are coplanar for all $i\in  {\mathcal{I}}(a)$.
\end{itemize}
\end{Cor}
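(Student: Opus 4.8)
The plan is to read this off from Propositions \ref{cardinalitylines} and \ref{interstick} together with the elementary fact that a family of pairwise distinct, pairwise incident lines in $\PP^3$ is either coplanar or concurrent through a single point. I would prove only the first bullet; the second is word for word the same, with the roles of the two index sets interchanged and Proposition \ref{interstick}(i) (formula (\ref{punto1})) used in place of Proposition \ref{interstick}(ii) (formula (\ref{punto2})).

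Fix $j\in\mathcal{I}(b)$ and put $r_i=P^\AAA_i\star Q^\AAA_j\star L^\AAA$ for $i\in\mathcal{I}(a)$. By Proposition \ref{cardinalitylines} the $r_i$ are $a$ pairwise distinct lines, and since they all share the same $Q$-index, Proposition \ref{interstick}(ii) guarantees $r_i\cap r_k\neq\emptyset$ for all $i,k\in\mathcal{I}(a)$. If $a\le 2$ there is nothing to prove (two distinct incident lines are coplanar), so assume $a\ge 3$. Choose $r_{i_0}$ and $r_{i_1}$; being distinct and incident they span a unique plane $\Pi$. For every other index $i_m$, the triple $r_{i_0},r_{i_1},r_{i_m}$ is pairwise incident, hence coplanar or concurrent; if coplanar, its plane necessarily equals $\Pi$ (since $r_{i_0}$ and $r_{i_1}$ already determine $\Pi$), so $r_{i_m}\subset\Pi$, and ranging over $m$ gives that all the $r_i$ lie in $\Pi$.

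It remains to exclude the concurrent case, and this is the one place where the explicit description (\ref{punto2}) is needed. By (\ref{punto2}), the $t$-th coordinate of $r_i\cap r_k$ equals $c_t\,(\alpha_t+i\beta_t)(\alpha_t+k\beta_t)(j\alpha_t+\beta_t)$, where $c_t$ depends only on $\AAA$ and not on the indices; thus the triple $r_{i_0},r_{i_1},r_{i_m}$ is concurrent exactly when $r_{i_0}\cap r_{i_1}=r_{i_0}\cap r_{i_m}$ as points of $\PP^3$, i.e. when the ratio $(\alpha_t+i_1\beta_t)/(\alpha_t+i_m\beta_t)$ is independent of $t$. Cross-multiplying $(\alpha_t+i_1\beta_t)(\alpha_s+i_m\beta_s)=(\alpha_t+i_m\beta_t)(\alpha_s+i_1\beta_s)$ and cancelling $i_1-i_m\neq 0$ forces $\alpha_t\beta_s=\alpha_s\beta_t$, i.e. $A_s=A_t$, contradicting the distinctness of the $A_i$. (Every linear factor occurring above is nonzero because $A_i\notin\mathcal{W}$; see Remark \ref{remarkzeri}.) Hence no such triple is concurrent, and the argument is complete. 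The only genuine obstacle is this non-concurrency check; everything else is routine incidence geometry of lines in $\PP^3$.
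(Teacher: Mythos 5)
Your proof is correct. The paper itself offers no argument for this corollary beyond the phrase ``as a corollary we get the following fact'' placed after Proposition \ref{interstick}, and you have rightly identified that pairwise incidence alone is \emph{not} enough: $a$ pairwise distinct, pairwise meeting lines in $\PP^3$ could a priori be concurrent rather than coplanar, so the non-concurrency check is a genuinely necessary step that the paper leaves implicit. Your way of carrying it out is sound: by (\ref{punto2}) the point $r_{i_0}\cap r_{i_1}$ coincides with $r_{i_0}\cap r_{i_m}$ only if $(\alpha_t+i_1\beta_t)/(\alpha_t+i_m\beta_t)$ is independent of $t$, and cross-multiplying gives $(i_m-i_1)(\alpha_t\beta_s-\alpha_s\beta_t)=0$, hence $A_s=A_t$, contradicting distinctness; the nonvanishing of all the factors involved is exactly Remark \ref{remarkzeri} together with Remark \ref{rmkLD}. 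For comparison, there is a more direct route that bypasses the incidence geometry entirely: from the explicit equations of $P_i^\AAA\hada Q_j^\AAA\hada L^\AAA$ displayed in the proof of Proposition \ref{interstick}, adding $i$ times the second equation to the first cancels the factor $(\alpha_t+i\beta_t)$ and yields the plane
\[
\sum_{t=0}^{3}\frac{\alpha_t\beta_t}{\,j\alpha_t+\beta_t\,}\,x_t=0,
\]
which is independent of $i$ and therefore contains every line $P_i^\AAA\hada Q_j^\AAA\hada L^\AAA$ with $i\in\mathcal{I}(a)$ (and symmetrically for the second bullet). That version exhibits the common plane explicitly and needs neither Proposition \ref{cardinalitylines} nor the concurrency dichotomy, but your argument is complete as written and has the merit of using only the stated conclusions of the two propositions.
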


We have now all ingredients to state the main result of this section.

\begin{Thm}\label{Thmstick}
Assume that $1\notin  {\mathcal{I}}(a)\cup  {\mathcal{I}}(b)$. With the same hypothesis of Theorem \ref{IC}, $Z_{a,b}^\AAA\star L^\AAA$ is a stick figure of  $ab$ lines in $\PP^3$. Moreover  $Z_{a,b}^\AAA\star L^\AAA$ is a complete intersection.
\end{Thm}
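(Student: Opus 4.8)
The plan is to assemble the pieces already proved. Put $\ell_{ij}:=P_i^\AAA\star Q_j^\AAA\star L^\AAA$ for $i\in{\mathcal I}(a)$, $j\in{\mathcal I}(b)$; by Proposition \ref{cardinalitylines} these are $ab$ pairwise distinct lines and $Z_{a,b}^\AAA\star L^\AAA=\bigcup_{i,j}\ell_{ij}$. Since two distinct lines of $\PP^3$ meet in at most a point, to see that this union is a generalized stick figure it suffices to show that no point lies on three of the $\ell_{ij}$; and to finish we must exhibit it as the complete intersection of two surfaces.

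For the stick-figure part I would first record a purely combinatorial fact: by Proposition \ref{interstick}, two distinct lines $\ell_{ij}$ and $\ell_{kl}$ meet precisely when $i=k$ or $j=l$, and, being distinct, exactly one of these holds. A short case analysis of how many of the three pairs among $\ell_{i_1j_1},\ell_{i_2j_2},\ell_{i_3j_3}$ agree in the row index then forces either $i_1=i_2=i_3$ or $j_1=j_2=j_3$, the remaining ``mixed'' configuration collapsing two of the three lines. Suppose then, say, $i_1=i_2=i_3=i$ with $j_1,j_2,j_3$ distinct: concurrency would force the two intersection points $\ell_{ij_1}\cap\ell_{ij_2}$ and $\ell_{ij_1}\cap\ell_{ij_3}$, given explicitly by (\ref{punto1}), to coincide. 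Comparing coordinates, this requires a single nonzero $\mu$ with $j_2\alpha_t+\beta_t=\mu(j_3\alpha_t+\beta_t)$ for $t=0,\dots,3$, forcing $\alpha_t/\beta_t$ to be constant, which contradicts the distinctness of the $A_t$. The common-column case is identical, using (\ref{punto2}). Hence $Z_{a,b}^\AAA\star L^\AAA$ is a generalized stick figure of lines.

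For the complete-intersection part I would argue geometrically, assuming $a,b\ge 2$ (if $a=1$ or $b=1$ the lines are coplanar by Corollary \ref{CORSTICK} and one of the two defining surfaces may be taken to be a plane, so the statement is immediate). By Corollary \ref{CORSTICK}, for each $i$ the $b$ distinct lines $\{\ell_{ij}:j\in{\mathcal I}(b)\}$ span a plane $\pi_i$, and for each $j$ the $a$ distinct lines $\{\ell_{ij}:i\in{\mathcal I}(a)\}$ span a plane $\sigma_j$. Using Proposition \ref{interstick} once more, the $\pi_i$ are pairwise distinct, the $\sigma_j$ are pairwise distinct, and $\pi_i\ne\sigma_j$ for all $i,j$: indeed, if say $\pi_i=\sigma_j$ then that plane would contain $\ell_{ij'}$ and $\ell_{i'j}$ for any $i'\ne i$ and $j'\ne j$, forcing those two lines to meet, contrary to Proposition \ref{interstick}. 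Set $S_1=\bigcup_i\pi_i$ and $S_2=\bigcup_j\sigma_j$, surfaces of degrees $a$ and $b$. Then $\pi_i\cap\sigma_j$ is a line containing $\ell_{ij}$, hence equal to $\ell_{ij}$, so $V(S_1)\cap V(S_2)=\bigcup_{i,j}\ell_{ij}=Z_{a,b}^\AAA\star L^\AAA$ set-theoretically; in particular this intersection is one-dimensional, so it is proper, and $V(S_1)\cap V(S_2)$ is a codimension-two complete intersection of degree $ab$. At a general point of a fixed $\ell_{ij}$ only the component $\pi_i$ of $S_1$ and the component $\sigma_j$ of $S_2$ pass, and $\pi_i\ne\sigma_j$, so $S_1$ and $S_2$ meet transversally there; hence $V(S_1)\cap V(S_2)$ is generically reduced, and being Cohen--Macaulay it has no embedded components, so it is reduced and coincides scheme-theoretically with $Z_{a,b}^\AAA\star L^\AAA$. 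Therefore $Z_{a,b}^\AAA\star L^\AAA=V(S_1)\cap V(S_2)$ is a complete intersection of type $(a,b)$.

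The coordinate comparisons in the stick-figure step and the transversality bookkeeping are routine. The step I expect to require the most care is extracting from Proposition \ref{interstick} both the incidence pattern that makes the union a stick figure and the non-degeneracy $\pi_i\ne\sigma_j$ that lets us recognise $S_1\cap S_2$ as exactly, and reducedly, our $ab$ lines; this is precisely where the earlier ad hoc choice of the points $P_i^\AAA$ and $Q_j^\AAA$ pays off.
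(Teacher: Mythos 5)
Your proof is correct and follows essentially the same route as the paper: distinctness of the $ab$ lines from Proposition \ref{cardinalitylines}, the complete-intersection property from the two families of coplanar lines in Corollary \ref{CORSTICK} (viewed as split surfaces of degrees $a$ and $b$), pairwise intersections controlled by the first part of Proposition \ref{interstick}, and non-concurrency of three lines by observing that the explicit intersection points (\ref{punto1}) and (\ref{punto2}) vary with the indices. You simply make explicit several steps the paper leaves implicit (the combinatorial case analysis for three pairwise-meeting lines, the coordinate comparison forcing $\alpha_t/\beta_t$ constant, and the distinctness and transversality of the planes $\pi_i$, $\sigma_j$), all of which check out.
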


\begin{proof}
By Proposition \ref{cardinalitylines}, we know that $Z_{a,b}^\AAA\star L^\AAA$ consists of $ab$ distinct lines.
By Corollary \ref{CORSTICK}, it follows that $Z_{a,b}^\AAA\star L^\AAA$  is a complete intersection.
By the first part of Proposition \ref{interstick}, we know that two lines in $Z_{a,b}^\AAA\star ^\AAA$ intersect in a space of dimension at most 0. 
By the second part of Proposition \ref{interstick}, we know  that the coordinates of the point of intersection of two lines $P_i^\AAA\star Q_j^\AAA \star L^\AAA$ and $P_i^\AAA\star Q_l^\AAA\star L^\AAA$  (resp. $P_i^\AAA\star Q_j^\AAA \star L^\AAA$ and $P_k^\AAA\star Q_j^\AAA\star L^\AAA$ ) are dependent of the indices $i,j$ and $l$ (resp. $i,j$ and $k$) assuring us that  three lines in $Z_{a,b}^\AAA\star L^\AAA$ intersect in a space of dimension at most -1.
Hence $Z_{a,b}^\AAA\star L^\AAA$ satisfies the conditions to be a stick figure of lines and the statement is proved. 

\end{proof}


\section{Gorenstein sets of points}\label{gorsec}
As a final step of our construction, we apply  the procedure described in Section \ref{MNsec} to our stick figure to get a Gorenstein set of points in $\PP^3$ with a given $h-$vector.

Again, let
$${\mathbf h}=(h_0 , h_1 , \dots , h_{s})=(1,3,h_2,\dots,h_{t-1},h_t,h_t,\dots,h_t,h_{t-1},\dots,h_2,3,1)$$
be a SI-sequence, and consider the first difference
$$\Delta{\mathbf h}=(1,2,h_2-h_1,\dots,h_t-h_{t-1},0,0,\dots,0,h_{t-1}-h_t,\dots,-2,-1).
$$  

Define the two sequences ${\mathbf a}=(a_0,\dots,a_t)$ and ${\mathbf g}=(g_0,\dots,g_{s+1})$ as expressed in (\ref{costruzionea}) and (\ref{costruzioneg}).
As already said in Section \ref{MNsec}, ${\mathbf g}$ is the $h$-vector of a complete intersection, $X$, of
two surfaces in $\PP^3$ of degree $t+1$ and $s-t+2$. 

Hence we consider, as $X$, the stick figure $Z_{t+1,s-t+2}^\AAA\star L^\AAA$ (for a suitable choice of $\AAA$, ${\mathcal{I}}(t+1)$ and ${\mathcal{I}}(s-t+2)$ with the hypotheses of Theorems \ref{IC} and \ref{Thmstick}).

If we set 
\begin{equation}\label{IaIb}
{\mathcal{I}}(t+1)=\{u_0, \dots, u_{t}\} \mbox{ and } {\mathcal{I}}(s-t+2)=\{v_0, \dots, v_{s-t+1}\}, 
\end{equation}
then  the aCM scheme $C_1$ with $h-$vector ${\mathbf a}$ is given by the following set of lines in $Z_{t+1,s-t+2}^\AAA\star L^\AAA$:
\[
P^\AAA_{u_i}\star Q_{v_j}^\AAA\star L^\AAA \mbox{ for } j=0, \dots, a_i-1 \mbox{ and } i =0, \dots, t.
\] 
and, obviously, the residual scheme $C_2$ is the set of lines in $Z_{t+1,s-t+2}^\AAA\star L^\AAA$ and not in $C_1$.

We use the following notation for the points of intersections of lines in the stick figure $Z_{t+1,s-t+2}^\AAA\star L^\AAA$:
\[
G_{i\{j,k\}}=P_{u_i}^\AAA\star Q_{v_j}^\AAA\star L^\AAA \cap P_{u_i}^\AAA\star Q_{v_k}^\AAA\star L^\AAA
\]
and
\[
G_{\{i,k\}j}=P_{u_i}^\AAA\star Q_{v_j}^\AAA\star L^\AAA \cap P_{u_k}^\AAA\star Q_{v_j}^\AAA\star L^\AAA.
\]
\begin{Thm} Let ${\mathbf h}, {\mathbf a}$ and ${\mathbf g}$ be as above. Then the set of points
\[
\begin{cases}
 G_{i\{j,k\}}  & \mbox{ with } 0\le j\le a_{i}-1  \mbox{ and } a_{i}\le k\le s-t+1 \mbox{ for } i=0,\dots, t\\
\\
G_{\{i,k\}j} & \mbox{ with } \min\{a_i,a_k\}\leq j \leq \max\{a_i,a_k\}-1 \mbox{ for } 0\leq i < k \leq t
\end{cases}
\]
is a Gorenstein zeroscheme with $h-$vector ${\mathbf h}$.
\end{Thm}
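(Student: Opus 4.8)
The plan is to realize the asserted point set as the intersection $C_1\cap C_2$ coming from the Migliore--Nagel liaison construction recalled in Section~\ref{MNsec}, applied to the concrete stick figure built in Section~\ref{sticksec}, and then to translate that intersection into the combinatorics of the grid of lines $Z_{t+1,s-t+2}^\AAA\star L^\AAA$.

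First I would set $X=Z_{t+1,s-t+2}^\AAA\star L^\AAA$, with the hypotheses of Theorems~\ref{IC} and \ref{Thmstick} in force (in particular $1\notin {\mathcal{I}}(t+1)\cup {\mathcal{I}}(s-t+2)$ and the $A_i$ suitably generic), and notation as in \eqref{IaIb}. By Theorem~\ref{Thmstick}, $X$ is a complete intersection stick figure of $(t+1)(s-t+2)$ lines. The key point here is to see that $X$ has exactly the shape of the complete intersection stick figure used in \cite{MN}: by Corollary~\ref{CORSTICK} the $s-t+2$ ``columns'' $\{P_{u_i}^\AAA\star Q_{v_j}^\AAA\star L^\AAA : i\}$ span planes $\Pi_{v_0},\dots,\Pi_{v_{s-t+1}}$ and the $t+1$ ``rows'' $\{P_{u_i}^\AAA\star Q_{v_j}^\AAA\star L^\AAA : j\}$ span planes $\Sigma_{u_0},\dots,\Sigma_{u_t}$, so that $X$ is cut out by $\prod_j\Pi_{v_j}$ (degree $s-t+2$) and $\prod_i\Sigma_{u_i}$ (degree $t+1$); hence its $h$-vector is ${\mathbf g}$, with the $\Sigma_{u_i}$ playing the role of the forms $A_i=0$ and the $\Pi_{v_j}$ that of the $B_j=0$. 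The curve $C_1$ of the statement is then literally the ``staircase'' subfigure obtained by taking, in each $\Sigma_{u_i}$, the $a_i$ consecutive lines $\Sigma_{u_i}\cap\Pi_{v_0},\dots,\Sigma_{u_i}\cap\Pi_{v_{a_i-1}}$ (always starting from $\Pi_{v_0}$), so \cite[Corollary~3.7]{MN} gives that $C_1$ is aCM with $h$-vector ${\mathbf a}$. Taking $C_2$ to be the residual of $C_1$ in $X$, Theorem~\ref{LiaisonThm} produces the arithmetically Gorenstein codimension-$3$ scheme $Y$ with $I_Y=I_{C_1}+I_{C_2}$, Theorem~\ref{hvettore} identifies its $h$-vector with ${\mathbf h}$, and, $X$, $C_1$, $C_2$ being stick figures with $C_1\cup C_2=X$, $Y$ is a reduced set of points. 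Up to here nothing is new: it is the Migliore--Nagel recipe applied verbatim.

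The real content is the last step, describing $Y=C_1\cap C_2$ explicitly. I would organize the $(t+1)(s-t+2)$ lines of $X$ as a grid, rows indexed by $i\in\{0,\dots,t\}$ and columns by $j\in\{0,\dots,s-t+1\}$, the $(i,j)$-cell being $P_{u_i}^\AAA\star Q_{v_j}^\AAA\star L^\AAA$. By the first assertion of Proposition~\ref{interstick}, two cells meet exactly when they share a row or a column, and by the stick-figure conclusion of Theorem~\ref{Thmstick} no three are concurrent; so the singular points of $X$ are precisely the nodes $G_{i\{j,k\}}$ (pairs of lines in a common row $i$) and $G_{\{i,k\}j}$ (pairs in a common column $j$), with coordinates \eqref{punto1} and \eqref{punto2}. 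In row $i$, by construction the components of $C_1$ are the cells with column index $<a_i$ and those of $C_2$ are the rest; so a same-row node $G_{i\{j,k\}}$ (ordering $j<k$) lies on one component of $C_1$ and one of $C_2$ iff $0\le j\le a_i-1$ and $a_i\le k\le s-t+1$. For a same-column node $G_{\{i,k\}j}$ with $i<k$, the cell $(i,j)$ is in $C_1$ iff $j<a_i$, so the node is of mixed type iff exactly one of $j<a_i$, $j<a_k$ holds, that is, iff $\min\{a_i,a_k\}\le j\le\max\{a_i,a_k\}-1$. Since $Y=C_1\cap C_2$ is exactly the set of nodes of $X$ lying on one component of each, this matches the two families in the statement.

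I expect the main obstacle to be bookkeeping rather than mathematics: making the dictionary between $Z_{t+1,s-t+2}^\AAA\star L^\AAA$ and the abstract complete-intersection stick figure of \cite{MN} tight enough that \cite[Corollary~3.7]{MN} applies directly (in particular that the two defining surfaces have the required degrees $t+1$ and $s-t+2$, and that ``$a_i$ consecutive lines starting from $B_0$'' is respected under our labeling), together with handling degenerate index ranges painlessly (e.g. $a_i=0$ or $a_i=s-t+2$ simply make the corresponding ranges in the statement empty). A secondary point to check is that $I_{C_1}+I_{C_2}$, although possibly not saturated, defines as its closed subscheme exactly the reduced finite set of mixed nodes, which is automatic since $X$, $C_1$, $C_2$ are reduced nodal unions of lines.
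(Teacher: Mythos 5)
Your proposal is correct and follows essentially the same route as the paper: the paper's own proof is a one-line appeal to Theorem \ref{LiaisonThm} and Theorem \ref{hvettore}, with the identification of $X=Z_{t+1,s-t+2}^\AAA\star L^\AAA$ as the Migliore--Nagel complete-intersection stick figure, the definition of the staircase $C_1$, and the labelling of the nodes $G_{i\{j,k\}}$, $G_{\{i,k\}j}$ all set up in the surrounding text exactly as you describe. The only difference is that you make explicit the combinatorial bookkeeping (which mixed nodes of the grid lie on one component of each of $C_1$ and $C_2$) that the paper leaves implicit, and your index analysis for both families of nodes is correct.
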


\begin{proof} This follows directly from  Theorem \ref{LiaisonThm}   and Theorem \ref{hvettore}.
\end{proof}

Using the description, in Proposition \ref{interstick}, of intersections in the stick figure, we can state the previous theorem in terms of the coordinates of the points in the desired Gorenstein set.

Consider ${\mathcal{I}}(t+1)$ and ${\mathcal{I}}(s-t+2)$ as in (\ref{IaIb}). Denote by $[V_{i\{j,k\}}]$ the point whose coordinates are

\begin{equation}\label{puntoo}
\begin{split}
[V_{i\{j,k\}}]_0=&-\frac{ ( \alpha_0+u_i\beta_0)(v_j  \alpha_0+\beta_0)(v_k \alpha_0+\beta_0)}{\alpha_0\beta_0(\alpha_0\beta_1-\alpha_1\beta_0)(\alpha_0\beta_2-\alpha_2\beta_0)(\alpha_0\beta_3-\alpha_3\beta_0)}\\
[V_{i\{j,k\}}]_1=&\frac{ ( \alpha_1+u_i\beta_1)(v_j  \alpha_1+\beta_1)(v_k \alpha_1+\beta_1)}{\alpha_1\beta_1(\alpha_0\beta_1-\alpha_1\beta_0)(\alpha_1\beta_2-\alpha_2\beta_1)(\alpha_1\beta_3-\alpha_3\beta_1)}\\
[V_{i\{j,k\}}]_2 = & -\frac{( \alpha_2+u_i\beta_2)(v_jj  \alpha_2+\beta_2)(v_k \alpha_2+\beta_2) }{\alpha_2\beta_2(\alpha_0\beta_2-\alpha_2\beta_0)(\alpha_1\beta_2-\alpha_2\beta_1)(\alpha_2\beta_3-\alpha_3\beta_2)}\\
[V_{i\{j,k\}}]_3= & \ \frac{( \alpha_3+u_i\beta_3)(v_j  \alpha_3+\beta_3) (v_k \alpha_3+\beta_3)}{\alpha_3\beta_3(\alpha_0\beta_3-\alpha_3\beta_0)(\alpha_1\beta_3-\alpha_3\beta_1)(\alpha_2\beta_3-\alpha_3\beta_2)}
\end{split}
\end{equation}
and by $[V_{\{i,k\}j}]$ the point whose coordinates are
\begin{equation}\label{puntov}
\begin{split}
[V_{\{i,k\}j}]_0=&-\frac{ ( \alpha_0+u_i\beta_0)( \alpha_0+u_k\beta_0)(v_j  \alpha_0+\beta_0)}{\alpha_0\beta_0(\alpha_0\beta_1-\alpha_1\beta_0)(\alpha_0\beta_2-\alpha_2\beta_0)(\alpha_0\beta_3-\alpha_3\beta_0)}\\
[V_{\{i,k\}j}]_1=&\frac{ ( \alpha_1+u_i\beta_1)( \alpha_1+u_k\beta_1)(v_j  \alpha_1+\beta_1)}{\alpha_1\beta_1(\alpha_0\beta_1-\alpha_1\beta_0)(\alpha_1\beta_2-\alpha_2\beta_1)(\alpha_1\beta_3-\alpha_3\beta_1)}\\
[V_{\{i,k\}j}]_2=&-\frac{ ( \alpha_2+u_i\beta_2)( \alpha_2+u_k\beta_2)(v_j  \alpha_2+\beta_2) }{\alpha_2\beta_2(\alpha_0\beta_2-\alpha_2\beta_0)(\alpha_1\beta_2-\alpha_2\beta_1)(\alpha_2\beta_3-\alpha_3\beta_2)}\\
[V_{\{i,k\}j}]_3=&\frac{ ( \alpha_3+u_i\beta_3)( \alpha_3+u_k\beta_3)(v_j  \alpha_3+\beta_3)}{\alpha_3\beta_3(\alpha_0\beta_3-\alpha_3\beta_0)(\alpha_1\beta_3-\alpha_3\beta_1)(\alpha_2\beta_3-\alpha_3\beta_2)}.
\end{split}
\end{equation}

\begin{Cor}\label{corfinale} 
 Let  ${\mathbf h}$ be an admissible $h-$vector for a Gorenstein zeroscheme in $\PP^3$ of the form ${\mathbf h}=(h_0  , \dots , h_{s})=(1,3,h_2,\dots,h_{t-1}, h_t,h_t,\dots,h_t,h_{t-1},\dots,3,1)$ and let $a_i=h_i-h_{i-1}$ for $0\leq{i}\leq{t}$. Fix four distinct points $A_i=[\alpha_i:\beta_i]$ in $\PP^1\setminus (\Delta_0\cup \mathcal{W})$, for $i=0,\dots, 3$ and fix the sets of nonnegative integers ${\mathcal{I}}(t+1)=\{u_0, \dots, u_{t}\}$  and ${\mathcal{I}}(s-t+2)=\{v_0, \dots, v_{s-t+1}\}$ with $0\in {\mathcal{I}}(t+1)\cap {\mathcal{I}}(s-t+2)$ and $1\notin {\mathcal{I}}(t+1)\cup {\mathcal{I}}(s-t+2)$.  Then the set of points

\[
\begin{cases}
[V_{i\{j,k\}}] &  \mbox{ with }  0\le j\le a_{i}-1, a_{i}\le k\le s-t+1, \mbox{ for } i=0,\dots, t\\
\\
[V_{\{i,k\}j}] &   \mbox{ with } \min\{a_i,a_k\}\leq j \leq \max\{a_i,a_k\}-1,  \mbox{ for } 0\leq i < k \leq t
\end{cases}
\]
is a Gorenstein zeroscheme with $h-$vector ${\mathbf h}$.
\end{Cor}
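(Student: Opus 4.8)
The plan is to recognize the points $[V_{i\{j,k\}}]$ and $[V_{\{i,k\}j}]$ as the intersection points $G_{i\{j,k\}}$ and $G_{\{i,k\}j}$ of the stick figure $Z^\AAA_{t+1,s-t+2}\star L^\AAA$ that appear in the theorem immediately preceding this corollary, and then to read off their coordinates from Proposition \ref{interstick}. Thus the corollary will follow from that theorem together with Proposition \ref{interstick}, with essentially no new computation.

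First I would check that the given data meet the hypotheses of Theorems \ref{IC} and \ref{Thmstick}: the $A_i=[\alpha_i:\beta_i]$ are four distinct points of $\PP^1\setminus(\Delta_0\cup\mathcal{W})$, and the index sets satisfy $0\in\mathcal{I}(t+1)\cap\mathcal{I}(s-t+2)$ and $1\notin\mathcal{I}(t+1)\cup\mathcal{I}(s-t+2)$. Consequently $Z^\AAA_{t+1,s-t+2}\star L^\AAA$ is a stick figure of $(t+1)(s-t+2)$ lines in $\PP^3$ which is a complete intersection of two surfaces of degrees $t+1$ and $s-t+2$; this is the scheme $X$ with $h$-vector $\mathbf g$ used in the Migliore--Nagel construction of Section \ref{MNsec}. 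Next I would use Corollary \ref{CORSTICK}: for each fixed $i$ the lines $P^\AAA_{u_i}\star Q^\AAA_{v_0}\star L^\AAA,\dots,P^\AAA_{u_i}\star Q^\AAA_{v_{s-t+1}}\star L^\AAA$ are coplanar, and for each fixed $j$ the lines $P^\AAA_{u_0}\star Q^\AAA_{v_j}\star L^\AAA,\dots,P^\AAA_{u_t}\star Q^\AAA_{v_j}\star L^\AAA$ are coplanar. These two families of planes play the roles of the surfaces $A_i=0$ and $B_j=0$ in \cite{MN}, while the line $P^\AAA_{u_i}\star Q^\AAA_{v_j}\star L^\AAA$ plays the role of $L_{i,j}$; hence the union of the $a_i$ ``consecutive'' lines $P^\AAA_{u_i}\star Q^\AAA_{v_j}\star L^\AAA$, $0\le j\le a_i-1$, over $i=0,\dots,t$, is exactly the aCM curve $C_1$ with $h$-vector $\mathbf a$, and its residual $C_2$ in $X$ is the union of the remaining lines. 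By Theorems \ref{LiaisonThm} and \ref{hvettore}, $Y=C_1\cap C_2$ is then a reduced arithmetically Gorenstein set of points with $h$-vector $\mathbf h$, and its points are precisely the intersections of a line of $C_1$ with a line of $C_2$; tracking which lines belong to $C_1$ and which to $C_2$ yields exactly the enumeration of $Y$ by the $G_{i\{j,k\}}$ with $0\le j\le a_i-1$, $a_i\le k\le s-t+1$, and the $G_{\{i,k\}j}$ with $\min\{a_i,a_k\}\le j\le\max\{a_i,a_k\}-1$, which is the statement of the preceding theorem and which I would simply cite.

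The last step is bookkeeping with coordinates. Applying Proposition \ref{interstick}(i) with the index triple $(u_i,v_j,v_k)$ substituted for $(i,j,l)$ identifies $G_{i\{j,k\}}=P^\AAA_{u_i}\star Q^\AAA_{v_j}\star L^\AAA\cap P^\AAA_{u_i}\star Q^\AAA_{v_k}\star L^\AAA$ with the point \eqref{punto1}, whose coordinates are exactly $[V_{i\{j,k\}}]$ as displayed in \eqref{puntoo}; likewise Proposition \ref{interstick}(ii) with $(u_i,u_k,v_j)$ in place of $(i,k,j)$ identifies $G_{\{i,k\}j}$ with the point \eqref{punto2}, i.e.\ with $[V_{\{i,k\}j}]$ of \eqref{puntov}. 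The hypothesis $1\notin\mathcal{I}(t+1)\cup\mathcal{I}(s-t+2)$ is what makes Proposition \ref{interstick} applicable. Combining these identifications with the preceding theorem gives the corollary.

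I do not expect a serious obstacle: there is no new geometric input beyond Theorems \ref{IC}, \ref{Thmstick} and Proposition \ref{interstick}. The only point that needs care is the combinatorial matching — verifying that the residual $C_2$ of the ``consecutive lines'' curve $C_1$ inside $X$ produces precisely the list of intersection points above — and this is already carried out in the proof of the preceding theorem, so here it is enough to quote it.
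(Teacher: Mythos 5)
Your proposal is correct and follows essentially the same route as the paper: the corollary is obtained by combining the preceding theorem (itself a direct consequence of Theorems \ref{LiaisonThm} and \ref{hvettore} applied to the stick figure $Z^\AAA_{t+1,s-t+2}\star L^\AAA$ and the curve $C_1$ of consecutive lines) with the explicit coordinates of the intersection points supplied by Proposition \ref{interstick}. The identification of $[V_{i\{j,k\}}]$ with $G_{i\{j,k\}}$ and of $[V_{\{i,k\}j}]$ with $G_{\{i,k\}j}$ via the substitutions $(u_i,v_j,v_k)$ and $(u_i,u_k,v_j)$ is exactly the bookkeeping the paper performs.
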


\begin{Ex} \rm
Let  ${\mathbf h}$ be $h$-vector $(1,3,4,3,1)$ of Example \ref{punti}. One has $t=2$, $s=4$ and ${\mathbf a}=(1,2,1)$.

Fix
\[
A_0=[1:1], \, A_1=[1:2],  \, A_2=[1:3], \,  A_3=[1:4]
\]
and
\[
\begin{array}{rl}
{\mathcal{I}}(t+1)=\{u_0, u_1, u_{2}\}&=\{0,2,4\}\\
\\
{\mathcal{I}}(s-t+2)=\{v_0, v_1, v_2, v_3\}&=\{0,2,4,6\}.
\end{array}
\]
Substituting these values in (\ref{puntoo}) and (\ref{puntov}) we get, by Corollary  \ref{corfinale}, that the Gorenstein set of points with $h$-vector $(1,3,4,3,1)$ is given by

\noindent
$
[V_{i\{j,k\}}]=
\left\lbrack
\begin{array}{r}
-\frac{(2k+1)(2j+1)(2i+1)}{6}\\
\frac{(2k+2)(2j+2)(4i+1)}{4}\\
-\frac{(2k+3)(2j+3)(6i+1)}{6}\\
\frac{(2k+4)(2j+4)(8i+1)}{24}
\end{array}\right\rbrack \mbox{ with }  0\le j\le a_{i}-1, a_{i}\le k\le 3, \mbox{ for } i=0,1, 2
$
\vskip0.2cm
\noindent
and
\vskip0.2cm
\noindent
$
[V_{\{i,k\}j}] =
\left\lbrack
\begin{array}{r}
-\frac{(2k+1)(2j+1)(2i+1)}{6}\\
\frac{(4k+1)(2j+2)(4i+1)}{4}\\
-\frac{(6k+1)(2j+3)(6i+1)}{6}\\
\frac{(8k+1)(2j+4)(8i+1)}{24}
\end{array}\right\rbrack \begin{array}{c} \mbox{ with }  \min\{a_i,a_k\}\leq j \leq \max\{a_i,a_k\}-1, \\ \\ \mbox{ for } 0\leq i < k \leq 2.\end{array}
$
\vskip0.2cm
\noindent
that is
\[
\begin{array}{cccccc}
\lbrack-\frac{1}{2}:2:-\frac{5}{2}:1\rbrack, &  & \lbrack-\frac56:3:-\frac72:\frac43\rbrack, & & \lbrack-\frac76:4:-\frac92:\frac53\rbrack,\\
\\
\lbrack-\frac{5}{2}:15:-\frac{49}{2}:12\rbrack, & & \lbrack-\frac72:20:-\frac{63}{2}:15 \rbrack,  && \lbrack, -\frac{15}{2}:30:-\frac{245}{6}:18 \rbrack, \\
\\
 \lbrack-\frac{21}{2}:40:-\frac{105}{2}:\frac{45}{2} \rbrack, &&  \lbrack -\frac{5}{2}: 18: -\frac{65}{2}: 17 \rbrack, &&  \lbrack -\frac{25}{6}:27:-\frac{91}{2}:\frac{68}{3}\rbrack,\\
 \\
   \lbrack -\frac{35}{6}:36:-\frac{117}{2}:\frac{85}{3}\rbrack, &&    \lbrack -\frac32:5:-\frac{35}{6}:\frac94 \rbrack, &&    \lbrack-\frac{15}{2}:45:-\frac{455}{6}:\frac{153}{4} \rbrack .   
\end{array}
\]
We can check in \texttt{Singular} if this set of points is Gorenstein.
The procedure \texttt{IP(n,M)} computes the ideal of a set of points given in matrix form $M$, where each column of $M$ represents a point. The procedure \texttt{HF(n,I,t)} computes the Hilbert function of an ideal $I$, in degree $t$. The integer $n$ refers to the number of variables in the polynomial ring $k[x_0,\dots, x_n]$.

\begin{verbatim}

int n=3;
ring R=0,(x(0..n)),dp;

matrix G[4][12]=-1/2,-5/6,-7/6,-5/2,-7/2,-15/2,-21/2,-5/2,-25/6,
   -35/6,-3/2,-15/2,2,3,4,15,20,30,40,18,27,36,5,45,-5/2,-7/2,
   -9/2,-49/2,-63/2,-245/6,-105/2,-65/2,-91/2,-117/2,-35/6,
   -455/6,1,4/3,5/3,12,15,18,45/2,17,68/3,85/3,9/4,153/4;

ideal I=IP(n,G);

HF(n,I,0);
1
HF(n,I,1);
4
HF(n,I,2);
8
HF(n,I,3);
11
HF(n,I,4);
12
\end{verbatim}
Hence, the first difference of the Hilbert function fo this set of points is exactly $(1,3,4,3,1)$.
\end{Ex}

\end{document}